\numberwithin{equation}{section}
\newtheorem{prop}{Proposition}[section]
\newtheorem{thm}[prop]{Theorem}
\newtheorem{lem}[prop]{Lemma}
\newtheorem{cor}[prop]{Corollary}
\newtheorem{rem}[prop]{Remark}
\newtheorem{defi}[prop]{Definition}
\def\begeq{\begin{equation}}
\def\endeq{\end{equation}}
\begin{document}
\title{ K\"ahler-Ricci flow on  $\mathbf G$-spherical Fano manifolds}

\dedicatory{Dedicated to Professor Gang Tian on the occasion of his 65th birthday}

\author{Feng  ${\rm Wang}^\dag$ and  Xiaohua  ${\rm Zhu}^{*}$}

\address{SMS, Zhejiang
University, Hangzhou,  310028, China.}
\email{wfmath@zju.edu.cn}

\address{SMS, Peking
University, Beijing 100871, China.}
\email{xhzhu@math.pku.edu.cn}

\thanks {$\dag$ partially  supported by the National Key Research and Development Program of China No. 2022YFA1005501 and NSFC 12031017.}
\thanks {* partially supported  by National Key R\&D Program of China  2020YFA0712800 and  NSFC 12271009.}

\subjclass[2000]{Primary: 53E20, 53C25; Secondary: 
32Q20,  53C30, 14L10}

\keywords{$\mathbf G$-spherical variety,  K\"ahler-Ricci flow,   equivariant $\mathbb C^*$-degeneration}

     \begin{abstract} We prove that the  Gromov-Hausdorff  limit of K\"ahler-Ricci flow  on  a  $\mathbf G$-spherical   Fano manifold $X$
 is a  $\mathbf G$-spherical  $\mathbb Q$-Fano    variety $X_{\infty}$,  which  admits a (singular)  K\"ahler-Ricci  soliton.  Moreover, the   $\mathbf G$-spherical  variety structure of  $X_{\infty}$  can be constructed as  a  center of   torus  $\mathbb C^*$-degeneration of $X$  induced by an element   in  the Lie algebra  of  Cartan   torus   of  $\mathbf G$.

\end{abstract}

\maketitle
\tableofcontents

\section{Introduction}
 Let $(X, J, \omega_0)$ be a Fano manifold and $\omega_t$  a solution of normalized K\"ahler-Ricci (KR) flow,
   \begin{align}\label{kr-flow}
   \frac{\partial \omega_t}{\partial t}=-{\rm Ric}\,(\omega_t)+\omega_t,~\omega_0\in 2\pi c_1(M, J).
   \end{align}
   It is known that (\ref{kr-flow})  has a global solution $\omega_t$ for all $t\ge 0$ whenever the initial metric  $\omega_0$ represents $2\pi c_1(M)$ in \cite{cao}. The famous Hamilton-Tian (HT) conjecture asserts \cite{Ti97}:

    {\it Any sequence of $(X, \omega_t)$ contains a subsequence converging to a length space $(X_\infty,\omega_\infty)$
   in the Gromov-Hausdorff topology and $(X_\infty,\omega_\infty)$ is a smooth KR soliton outside a closed subset $S$, called the singular set, of codimension at least $4$. Moreover, this subsequence of $(X, \omega_t)$ converges  locally to the regular part of $(X_\infty,\omega_\infty)$
   in the Cheeger-Gromov topology.}

   This conjecture was first proved by Tian and Zhang in dimension less than $4$ \cite{TZhzh}.  Moreover, they also proved that $X_\infty$ has a structure of   $\mathbb Q$-Fano    variety.   In higher dimensions,   the conjecture  was  proved by  Bamler \cite{Bam}  and Chen-Wang \cite{Chwang},  independently.   In fact, Bamler proved a generalized version of the  HT conjecture for Ricci flow with bounded scalar curvature. 
    The authors \cite{WZ20}  recently  gave  an alternative  proof to  the HT conjecture by using   the Tian's partial $C^0$-estimate established by Zhang \cite{ZhK} for  evolved K\"ahler metrics of  KR flow.  

As an application of  HT conjecture,   Chen-Sun-Wang  proved that the limit $(X_\infty,\omega_\infty)$  is independent of  choice of subsequence  of  $\omega_t$  \cite{CSW}.   In fact, they proved  that    $X_\infty$ is a center of  $\mathbb C^*$-degeneration of another  $\mathbb Q$-Fano    variety  $\bar X_\infty$,  which is a limit of  $1$-PS degeneration of $X_\infty$  generated  by the soliton vector field (VF)  $v$ of  $X_\infty$ (also see \cite[Theorem 3.4]{DS16} and  \cite{WZ}).   Here   $v$ can be uniquely  determined by the modified  Futaki invariant introduced in  \cite{TZ2,  Xio,  BW, WZZ}.  More recently,  by showing that $\bar X_\infty$  can be regarded as  a minimizer of  H-invariant
for a class of filtrations associated to $X$ in  algebraic geometry,   Han-Li further   proved that the limit $X_\infty$   is also  independent of choice of initial metric in   the class  $2\pi c_1(X, J)$ \cite{HL}.  The H-invariant was first introduced by Tian-Zhang-Zhang-Zhu \cite{TZZZ},  and latterly generalized by  Dervan-Sz\'ekelyhidi \cite{DS16} for   $\mathbb C^*$-degenerations of $X$.

A natural question is how to determine $ X_\infty$ as well as  $\bar X_\infty$ effectively.   This  question have been  recently  investigated for Fano group compactifications and  horosymmetric  Fano manifolds \cite{LTZ1, LL,  TZ3}. In particular,  by analyzing  the singularities for  a class of  real  Monge-Amp\`ere  equations,  Tian-Zhu   \cite{TZ3}  proved that the  limit  $X_\infty$ of KR flow   on  a  $\mathbf G$-horosymmetric  Fano manifold  is a $\mathbf G$-horosymmetric  $\mathbb Q$-Fano    variety,    which  can be realized as  a  center of     $\mathbb C^*$-degeneration induced by an element in the Lie algebra  $\mathfrak t$ of Cartan   torus  $\mathbf T$ of  complex reductive Lie group $\mathbf G$ associated to $X$.

The purpose of paper is to  generalize the Tian-Zhu's result above to  $\mathbf G$-spherical   Fano manifolds.   We prove
 
\begin{thm}\label{main} Let $\mathbf G$ be a complex reductive Lie group. 
 Then the  Gromov-Hausdorff  limit of KR flow  (\ref{kr-flow}) on  a  $\mathbf G-$spherical   Fano manifold  $(X,J)$
 is a  $\mathbf G$-spherical  $\mathbb Q$-Fano    variety $X_{\infty}$,  which  admits a (singular)  KR soliton $\omega_\infty$.  Moreover, the   $\mathbf G$-spherical  variety  structure of  $X_{\infty}$ can be constructed  as  a  center of    torus  $\mathbb C^*$-degeneration of $X$ induced by an element $\tilde \Lambda$  in  $  \mathcal V_{\mathbb R}$. Here  $\mathcal V_R$ is a  valuation cone as a  subset of  the quotient space of   torus  Lie algebra  $\mathfrak t$.  

\end{thm}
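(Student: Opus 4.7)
The plan is to combine the analytic input from the resolved Hamilton--Tian conjecture with the Luna--Vust combinatorial theory of spherical embeddings, paralleling the approach of Tian--Zhu in the horosymmetric setting \cite{TZ3}.

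First, I would exploit $\mathbf{G}$-equivariance. Starting from a $\mathbf{K}$-invariant initial metric $\omega_0 \in 2\pi c_1(X,J)$, with $\mathbf{K}$ a maximal compact subgroup of $\mathbf{G}$, the flow (\ref{kr-flow}) preserves $\mathbf{K}$-invariance. By the HT conjecture of Bamler \cite{Bam} and Chen--Wang \cite{Chwang}, combined with Chen--Sun--Wang \cite{CSW} and Han--Li \cite{HL}, the Gromov--Hausdorff limit $(X_\infty,\omega_\infty)$ is a $\mathbb{Q}$-Fano variety with a singular KR soliton, and the soliton VF $v$ commutes with the $\mathbf{K}$-action. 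Conjugating $v$ into the Cartan subalgebra $\mathfrak{t}\subset\mathrm{Lie}(\mathbf{G})$ yields an element $\tilde\Lambda\in\mathfrak{t}$. Since $X$ carries the full $\mathbf{G}$-action, this $\tilde\Lambda$ simultaneously generates a one-parameter subgroup of the Cartan torus acting on $X$, and I would identify $X_\infty$ with the central fiber of the associated $\mathbb{C}^*$-degeneration of $X$, using the uniqueness part of \cite{CSW, HL} to match this algebraic degeneration with the analytic limit.

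The heart of the proof is then to show that this central fiber is $\mathbf{G}$-spherical and that $\tilde\Lambda$, viewed in the appropriate quotient of $\mathfrak{t}$, lies in the valuation cone $\mathcal{V}_\mathbb{R}$. Here Luna--Vust theory enters: a $\mathbb{C}^*$-degeneration of a spherical variety induced by a torus one-parameter subgroup is encoded by the action of $\tilde\Lambda$ on the lattice $\mathcal{M}$ of $B$-semi-invariant weights, and the degeneration preserves sphericity---with colored fan explicitly computable from that of $X$---precisely when the induced functional on $\mathcal{M}$ corresponds to a $\mathbf{G}$-invariant valuation, that is, when $\tilde\Lambda\in\mathcal{V}_\mathbb{R}$. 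To verify this admissibility analytically, I would reduce the KR soliton equation on $X_\infty$ to a real Monge--Amp\`ere equation on the rational moment polytope $P$ of the spherical structure via the local structure theorem, and use the asymptotic linear drift of the soliton potential to read off $\tilde\Lambda$. The characterization of $\tilde\Lambda$ as the unique minimizer of the modified Futaki/H-invariant on $\mathcal{V}_\mathbb{R}$ \cite{HL,TZZZ,DS16} would complete the identification.

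The main obstacle I anticipate is the boundary analysis underlying this last step. In the horosymmetric case of \cite{TZ3} the open $\mathbf{G}$-orbit fibers over a generalized flag variety with toric fiber, decoupling the spherical combinatorics from the genuine Monge--Amp\`ere problem on the toric piece; for a general spherical variety no such decoupling exists, and the behavior of the soliton potential must be controlled at walls of $P$ coming from \emph{colors} (images of $B$-invariant but not $\mathbf{G}$-invariant divisors) directly from the Luna--Vust local model of the open $B$-orbit. Transporting the singular Monge--Amp\`ere estimates of \cite{TZ3} through this more intricate geometry, and in particular ruling out drift directions outside $\mathcal{V}_\mathbb{R}$, is where I expect the technical work to concentrate.
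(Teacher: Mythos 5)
Your proposal assembles the right external inputs (HT conjecture, Chen--Sun--Wang, Han--Li, Luna--Vust/Delcroix combinatorics, H-invariant minimization), but it diverges from the paper at the two places where the actual work happens, and in both places there is a gap. First, you identify $X_\infty$ with the central fiber of the degeneration generated by the soliton field $v$ itself (conjugated into $\mathfrak t$). That is not what Chen--Sun--Wang gives: the 1-PS degeneration by $v$ produces the intermediate variety $\bar X_\infty$, and a \emph{further} $\mathbb C^*$-degeneration inside ${\rm Aut}^v$ is needed to reach $X_\infty$. The whole point of the paper's Lemma \ref{uni} is to collapse this two-step process into a single torus 1-PS degeneration $e^{i\tilde v}$ from $X$ directly to $X_\infty$, where $\tilde v$ is a carefully constructed perturbation of $v$; the construction hinges on the observation that $\bar G_{\mathbf K}$ is a torus, which is exactly the multiplicity-freeness of $H^0(X,K_X^{-l})$ for spherical $X$. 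Without that step you only know $X_\infty$ degenerates from $\bar X_\infty$, and you cannot conclude it is the center of a \emph{single} torus degeneration of $X$ induced by one element of $\mathcal V_{\mathbb R}$ (nor, therefore, that it is spherical). Note also Remark \ref{remarkfor-v}: whether one can take $\tilde\Lambda=\Lambda_v$ is open for general spherical manifolds, so your implicit identification of the two is not available.

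Second, your plan for the ``heart'' --- reducing the soliton equation to a real Monge--Amp\`ere equation on the moment polytope and reading off $\tilde\Lambda$ from the asymptotic linear drift of the potential --- is precisely the horosymmetric strategy of \cite{TZ3} that the paper states it must abandon: for a general spherical variety the open $\mathbf G$-orbit does not fiber over a flag variety with toric fiber, there is no $K\times K$-type reduction to a convex potential on $\mathfrak t$, and the color walls cannot be handled by the estimates of \cite{TZ3}. You correctly flag this as the main obstacle, but you offer no way around it; the paper's answer is to avoid the PDE entirely and work algebraically, proving via Proposition \ref{mp} and Theorem \ref{special-classification} (built on Delcroix's classification and the rational-perturbation Lemma \ref{appr1}/Proposition \ref{RTC-appr}) that \emph{any} $\mathbf G$-equivariant real 1-PS degeneration with normal $\mathbb Q$-Fano central fiber is automatically induced by some $\Lambda\in\mathcal V_{\mathbb R}$, so membership in the valuation cone comes for free from normality of $X_\infty$ rather than from boundary analysis of a Monge--Amp\`ere equation. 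As written, your proposal would stall at both of these points.
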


\begin{rem}\label{remarkfor-v}The   element $\tilde \Lambda\in \mathcal V_{\mathbb R}$  in Theorem  \ref{main}  is constructed by  a small perturbation    $\tilde v$ of   the soliton VF $v$ of $(X_\infty, \omega_\infty)$  (cf. Lemma \ref{uni}).    $v$ can be uniquely determined    as a minimizer $\Lambda_v$  of convex functional  on   $\mathcal V_R$   which  associated to the H-invariant  restricted to  special torus   $1$-PS  degenerations (cf. Proposition \ref{ztcf} and  Proposition \ref{hinv}).   In case of horosymmetric  manifolds, it was proved  that $\tilde \Lambda$ can be chosen  as $\Lambda_v$ if  $v\neq 0$  \cite{TZ3}.     The latter implies that   both of  $\mathbb Q$-Fano varieties   $X_\infty$ and $\bar X_\infty$ are same after a projective  group transformation.  We guess that  the result is also true for  $\mathbf G$-spherical  manifolds. 
\end{rem}

By  the  torus $\mathbb C^*$-degeneration  in Theorem  \ref{main},    $X$ must be biholomorphic to $X_\infty$  if $X$ is   modified K-polystable.  Thus we prove  the following Yau-Tian-Donaldson conjecture    for  $\mathbf G-$spherical Fano  manifolds  via the method of KR flow immediately. 

\begin{cor}\label{WZ-new}
There exits a KR soliton on a $\mathbf G-$spherical Fano  manifold  if and only  it is   modified K-polystable in terms  of modified  Futaki invariant.   
\end{cor}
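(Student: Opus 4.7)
The plan is to derive Corollary \ref{WZ-new} as a near-immediate consequence of Theorem \ref{main} combined with the standard characterization of K\"ahler-Ricci solitons via the modified Futaki invariant, closely mirroring the argument used by Tian-Zhu \cite{TZ3} in the horosymmetric case.

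For the ``only if'' direction, I would argue along classical lines. Assume $X$ carries a KR soliton $\omega_{KS}$ with soliton VF $v$. The modified Mabuchi energy $\mathcal{M}_v$ is convex along (weak) geodesics in the space of $\mathbf{T}$-invariant K\"ahler potentials and is minimized at $\omega_{KS}$. Applying this to the geodesic ray associated to a $\mathbf{T}$-equivariant special test configuration (cf.\ \cite{BW, DS16, WZZ}) yields nonnegativity of the modified Donaldson-Futaki invariant, with equality only for product configurations. This is precisely modified K-polystability of $X$ in the sense used in the statement.

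For the ``if'' direction, suppose $X$ is modified K-polystable and run the KR flow (\ref{kr-flow}) from any initial $\omega_0\in 2\pi c_1(X)$. By Theorem \ref{main} the flow produces a torus $\mathbb{C}^*$-degeneration of $X$, induced by an element $\tilde{\Lambda}\in\mathcal{V}_{\mathbb{R}}$ in the Cartan torus of $\mathbf{G}$, whose central fibre is a $\mathbf{G}$-spherical $\mathbb{Q}$-Fano variety $X_\infty$ equipped with a (singular) KR soliton $\omega_\infty$. The existence of the soliton on $X_\infty$ forces the modified Futaki invariant of this degeneration to vanish (cf.\ Proposition \ref{hinv} and Remark \ref{remarkfor-v}, where $v$ is characterized as the minimizer of the convex functional $\Lambda\mapsto H(\Lambda)$ on $\mathcal{V}_{\mathbb{R}}$). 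Modified K-polystability of $X$ then forces the degeneration to be a product, so $X\cong X_\infty$ as $\mathbf{G}$-spherical varieties; in particular $X$ itself admits a (smooth) KR soliton.

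The main technical subtlety, and really the only nontrivial step, is reconciling the a priori real parameter $\tilde{\Lambda}\in\mathcal{V}_{\mathbb{R}}$ with the usual algebraic notion of modified K-polystability, which is phrased in terms of rational test configurations. The plan is to handle this exactly as in \cite{TZ3}: since the degeneration lives inside the Cartan torus $\mathbf{T}$, one rationally approximates $\tilde{\Lambda}$ within $\mathcal{V}_{\mathbb{R}}$ and uses continuity of the modified Futaki invariant on this cone to conclude that the degeneration must still be trivial. Once this is in place, the identification $X\cong X_\infty$ and smoothness of the resulting KR soliton on $X$ follow formally.
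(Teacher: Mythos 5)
Your overall strategy coincides with the paper's: the necessity direction is quoted from the general theory (exactly the references \cite{BW, DY, HL2, BLXZ} the paper invokes), and the sufficiency direction is extracted from Theorem \ref{main} by showing that modified K-polystability forces the degeneration from $X$ to $X_\infty$ to be a product, so that $X\cong X_\infty$ inherits the soliton. Two points need correcting, and the second is a genuine gap as you have stated it. First, the vanishing of the modified Futaki invariant of the degeneration is not a consequence of Proposition \ref{hinv} or Remark \ref{remarkfor-v}; it follows from Tian--Zhu's obstruction theorem \cite{TZ2} applied to the central fiber: $X_\infty$ admits a KR soliton with respect to $v$, hence $Fut_v$ vanishes on the reductive part of ${\rm Aut}(X_\infty)$, which contains the (torus) vector field generating the degeneration.

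Second, your treatment of the irrationality of $\tilde\Lambda$ --- ``rationally approximate $\tilde\Lambda$ and use continuity of the modified Futaki invariant'' --- does not close the argument. Continuity only gives $Fut_v(\Lambda_i)\to 0$ for rational approximants $\Lambda_i$; each $Fut_v(\Lambda_i)$ may be strictly positive, which is perfectly consistent with modified K-polystability and yields no triviality conclusion, and since the hyperplane $\{Fut_v=0\}$ is in general irrational (its coefficients depend on $\Lambda_0$ through the exponential weight), one cannot even arrange $Fut_v(\Lambda_i)=0$. What actually makes the argument work --- and is already built into Theorem \ref{main} via Proposition \ref{RTC-appr} and Theorem \ref{special-classification} --- is that the rational approximants can be chosen to have the \emph{same central fiber} $X_\infty$ as the real $1$-PS degeneration. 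One then has an honest special $\mathbf G$-equivariant $\mathbb C^*$-degeneration of $X$ with central fiber $X_\infty$ and vanishing modified Futaki invariant, to which the equality case of modified K-polystability applies directly, giving $X\cong X_\infty$. With that substitution your proof agrees with the paper's (which derives the corollary in one line from Theorem \ref{main} plus the known necessity direction).
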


The Yau-Tian-Donaldson conjecture  for KR solitons has been proved  for any Fano manifolds by  Datar-Sz\'ekelyhidi  \cite{DatS} via the continuity method and  by Chen-Sun-Wang \cite{CSW} via  the KR flow  method, respectively.  Actually, the necessary part comes from \cite{BW} (also see  \cite{DY, HL2,   BLXZ}). We would like to mention that in case of horosymmetric  manifolds the condition of   modified K-polystabilty  for the existence part  in Corollary \ref{WZ-new}  (in case of  KR solitons,  but not KE metrics) can be weakened as  the  modified K-semistabilty, see  Remark \ref{remarkfor-v}.   Thus we guess that this improvement   is still  true for    $\mathbf G-$spherical Fano  manifolds    (also see \cite[Remark 6.1-1)]{TZ3}).

By Theorem  \ref{main}, we see  that   the  limit $X_\infty$ of KR flow   (\ref{kr-flow})  
still preserves the $\mathbf G-$spherical structure  in  algebraic geometry.  On the other hand,  replacing by a class of  parabolic complex   Monge-Amp\`ere  equations for weak plurisubharmonic functions (cf. \cite{TS1, BBEGZ}),  the  KR flow    can be defined on  any $\mathbb Q$-Fano    variety with long time existence.  We hope that Theorem  \ref{main} can be generalized  to  the KR flow on a  spherical  $\mathbb Q$-Fano    variety.  In other words,  it is  very possible to establish a theory that  the   class of $\mathbf G-$spherical   $\mathbb Q$-Fano    varieties is  a closed set    in the setting of KR flows. \footnote{We are in debt to professor  Gang Tian   for pointing this out to us.}

 The class of $\mathbf G$-spherical     varieties is a very large one of highly symmetric varieties,  which contains toric varieties, generalized flag manifolds,  group compactifications and  horosymmetric  Fano manifolds  (cf. \cite{Al, Del1, Del2}). 
In  recent years,   the existence problem of (singular)  KE  metrics and (singular) KR solitons on such  $\mathbb Q$-Fano    varieties  has been extensively studied (cf. \cite{Del1, LZZ,  Del2,  LTZ2,  DH,  TZ3, LLW}, etc.).
The existence criterion  can be expressed in terms of the barycenter of moment polytope associated to the Cartan torus $\mathbf T$  of $\mathbf G$.

At last,   we  outline the  proof of Theorem  \ref{main}.    Because of lack of symmetry of  $\mathbf G$-spherical  manifolds compared with    horosymmetric   manifolds,  we will use a different approach   in  \cite{TZ3} to prove Theorem  \ref{main}.  The basic idea is to use the GIT method  via the Tian's partial $C^0$-estimate as in \cite{CSW, WZ20, WZ}.     By  the uniqueness of $X_\infty$ \cite{HL, WZ},  we need to prove  the theorem  only for the KR flow  with  $\mathbf K$-invariant initial metrics  in $2\pi c_1(X, J)$,  where  $\mathbf K$ is  a maximal compact subgroup of $\mathbf G$.
 We will  first establish a $\mathbf G_0$-equivariant  version of Chen-Sun-Wang's result  in   \cite{CSW}  for the  KR flow with  $\mathbf K_0$-invariant initial metrics  on any Fano manifold  $X$ (cf. Theorem \ref{th1}), where  $\mathbf G_0$ is  a reductive  subgroup of automorphisms  group ${\rm Aut}(X)$  of $X$ and $\mathbf K_0$ is   a maximal compact subgroup of $\mathbf G_0$.    The key point is to show that  the  $1$-PS  degeneration of $X_\infty$  induced   by the soliton VF  $v$ of  $X_\infty$ can be approximated by a  sequence of $\mathbf G_0$-equivariant   $\mathbb C^*$-degenerations (cf. Proposition \ref{RTC-appr},  Lemma \ref {commication}). Next, by using the spherical geometry for  $\mathbf G$-equivariant  $\mathbb C^*$-degenerations studied by Delcroix \cite{Del2}, we are able to show that     $X_\infty$  can be realized as  a  center  of   torus  $\mathbb C^*$-degeneration of $X$  induced by an element   in  $\mathcal V_{\mathbb R}$ (cf.  Theorem \ref{special-classification} and Lemma \ref{uni}).

The paper is organized as follows.  In Section 2,  we  use  the  notion of $1$-PS degeneration on a projective variety introduced in \cite{DT, Ti97}  to  prove  a   $\mathbf G_0$-equivariant  version of  approximation  result by a sequence of   $\mathbf G_0$-equivariant  $\mathbb C^*$-degenerations. 
  (cf. Proposition \ref{RTC-appr}).  Here notions of $1$-PS degeneration and $\mathbb C^*$-degenerations are corresponding to  ones of  $\mathbb R$-test configuration and test configuration introduced  in \cite{DT} and \cite{Do} for the setting of schemes,  respectively. In Section 3, we  use the technique of   partial $C^0$-estimate  to apply the result in Section 2 to  the Hilbert points associated to  a family of submanifolds of   Kodaira embeddings. Theorem \ref{th1} is the main result in this section. 
In next three sections, Section 4-6, we devote  to prove Theorem  \ref{main} by  using the spherical geometry for  $\mathbf G$-equivariant   $\mathbb C^*$-degenerations as   torus    $\mathbb C^*$-degenerations described by rational piecewise linear functions  on $\mathfrak t$ in \cite{Del2} (cf. Proposition \ref{ZTCP},  \ref{mp},  \ref{qsp}).  Theorem  \ref{main}  will be finally proved in Section 6.

\vskip3mm


\section{ $\mathbf G-$equivariant  $1$-PS  degenerations}

 Let    $(X, L)$ be a polarized  normal variety  in $\mathbb CP^n$ such that $(\mathbb CP^n, \mathcal {O}(1) )|_{X}=L$.
   Let   $\mathbf G_0$ be  a reductive  subgroup of automorphisms  group ${\rm Aut}(X)$  of $X$ and $\mathbf K_0$  a maximal compact subgroup of $\mathbf G_0$.   We recall the definition of   $\mathbb C^*$-degenerations  for $(X, L)$ in \cite{Ti97}.  

\begin{defi}\label{tc1}
A  normal variety  $\mathcal X$   with a $\mathbb C^*$-action   is called a   $\mathbb C^*$-degeneration  of  $(X, L)$  if it satisfies:
\begin{enumerate}
\item there  is a flat $\mathbb C^*$-equivarant map $\pi: \mathcal X\to \mathbb C$ such that $\pi^{-1}(t)$ is biholomorphic to $X$ for any $t\neq 0$;

\item there is a  holomorphic line bundle $\mathcal L$ on $\mathcal X$ such that $\mathcal L|_{\pi^{-1}(t)}$ is isomorphic to $L^{r}$ for some integer $r>0$ and  any $t\neq 0$.

  \end{enumerate}

 We say that  $(\mathcal X, \mathcal L)$ 
  is $\mathbf G_0$-equivariant if it  admits an action of $\mathbf G_0$ which commutes with the $\mathbb{C}^{*}$-action such that the isomorphism between $\left(X, L \right)$ and  $(\pi^{-1}(1), ~$
  $\left.\mathcal{L}\right|_{\pi^{-1}(1)})$ is $\mathbf G_0$-equivariant.
\end{defi}

For any   $\mathbb C^*$-degeneration $(\mathcal X,\mathcal L)$,  we can compactify it by gluing a trivial family to get a family over $\mathbb P^1$. We denote the compactification by $(\bar {\mathcal X}, \bar {\mathcal L})$. Assuming that $L^r$ is very ample, we can embed $M$ into the projective space $\mathbb{P}(H^0(X,L^r)))$  by sections of $H^0(X,L^r)$. Then $(\bar {\mathcal X}, \bar {\mathcal L})$  is embedded into $\mathbb{P}(H^0(X,L^r))\times \mathbb C$ and there is a 1-PS $\tau(s)$ in  ${\rm GL}(H^0(X, L^{r}),  \mathbb C)$ such that $\mathcal X_0=\lim_{s\rightarrow 0}\tau(s)X$ and $\mathcal L$ is the pullback of $\mathcal O(1)$(cf. \cite{RT}).  Here $\mathcal X_0=\pi^{-1}(0)$ denotes the center of   $(\mathcal X,\mathcal L)$.  It also holds for $\mathbf G_0-$equivariant $\mathbb C^*$-degenerations such that  $\tau(s)$ commute with $\mathbf G_0$, where $\mathbf G_0$ can be lifted as a subgroup of ${\rm GL}(H^0(X, L^{r})$.
 In fact, we have

\begin{lem}\label{1-ps-correponding}
For any 1-PS $\tau(s)$ $(s\in \mathbb C^*)$  in ${\rm GL}(H^0(X,L^r))$ commuting with $\mathbf G_0$, there is a $G_0$-equivariant $\mathbb C^*$-degeneration $(\mathcal X, \mathcal L)$ such that $\mathcal X_0= \lim_{s\rightarrow 0}\tau(s)X$. Conversely,  any $\mathbf G_0$-equivariant $\mathbb C^*$-degeneration $(\mathcal X,\mathcal L)$ arises this way if $r$ is large enough.
\end{lem}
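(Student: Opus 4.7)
The plan is to establish the correspondence in both directions using the standard Hilbert scheme / relative pushforward technology, inserting the $\mathbf G_0$-equivariance check at each step.

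For the forward direction, given a $1$-PS $\tau(s)$ in ${\rm GL}(H^0(X,L^r))$ commuting with $\mathbf G_0$, I would define $\mathcal X \subset \mathbb P(H^0(X,L^r)) \times \mathbb C$ as the closure of $\{(\tau(s)\cdot x, s) : x\in X,\ s\in \mathbb C^*\}$, with $\pi$ the projection to $\mathbb C$. Since the Hilbert polynomial is constant on the $\mathbb C^*$-orbit $s\mapsto [\tau(s)X]$, the resulting family is flat over $\mathbb C$, and its central fiber is precisely $\lim_{s\to 0}\tau(s)X$. Set $\mathcal L := \mathcal O_{\mathbb P}(1)|_{\mathcal X}$ (pulled back from the first factor), so that $\mathcal L|_{\pi^{-1}(t)} \cong L^r$ for $t\neq 0$. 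The diagonal $\mathbb C^*$-action preserves $\mathcal X$ by construction, and the commutation $\tau(s)g = g\tau(s)$ for $g\in \mathbf G_0$ guarantees that the (trivially extended) $\mathbf G_0$-action on $\mathbb P(H^0(X,L^r))\times \mathbb C$ preserves $\mathcal X$ and commutes with the $\mathbb C^*$-action. Normalizing $\mathcal X$ if necessary yields the required $\mathbf G_0$-equivariant $\mathbb C^*$-degeneration.

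For the converse, given a $\mathbf G_0$-equivariant $\mathbb C^*$-degeneration $(\mathcal X,\mathcal L)$ with $\mathcal L|_{\pi^{-1}(t)}\cong L^r$, I would enlarge $r$ (replacing $\mathcal L$ by $\mathcal L^{\otimes k}$) so that $L^r$ is very ample on $X$ and $H^i(X,L^r)=0$ for $i\geq 1$. By cohomology and base change, $\pi_*\mathcal L$ is then locally free on $\mathbb C$ of rank $h^0(X,L^r)$; since $\mathbb C$ is affine, this sheaf is trivializable, so $\pi_*\mathcal L \cong H^0(X,L^r)\otimes_{\mathbb C}\mathcal O_{\mathbb C}$. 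The relative evaluation map embeds $\mathcal X$ as a closed subscheme of $\mathbb P(\pi_*\mathcal L) = \mathbb P(H^0(X,L^r))\times \mathbb C$. Lifting the $\mathbb C^*$-action on $\mathcal X$ to $\mathcal L$ produces a linear $\mathbb C^*$-action on $\pi_*\mathcal L$, which (after restricting to the fiber over $1$ to trivialize) is exactly the desired 1-PS $\tau(s) \in {\rm GL}(H^0(X,L^r))$; the equivariance of the embedding then gives $\mathcal X_0 = \lim_{s\to 0}\tau(s)X$. Lifting the $\mathbf G_0$-action on $\mathcal X$ to $\mathcal L$ similarly produces a $\mathbf G_0$-representation on $H^0(X,L^r)$; because the $\mathbf G_0$- and $\mathbb C^*$-actions on $\mathcal X$ commute, so do their images in ${\rm GL}(H^0(X,L^r))$.

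The main technical obstacle is the simultaneous $\mathbf G_0\times \mathbb C^*$-linearization of $\mathcal L$. A priori the given $\mathbb C^*$- and $\mathbf G_0$-actions on $\mathcal X$ need not lift to $\mathcal L$ itself, only to a positive tensor power; this is why the statement requires ``$r$ large enough.'' For a normal variety acted on by a connected reductive group, however, such an equivariant structure does exist on some $\mathcal L^{\otimes k}$ by the standard theory of equivariant Picard groups, and any two linearizations of the two commuting group actions can be chosen compatibly so that their combined action remains a single $\mathbf G_0\times \mathbb C^*$-linearization. Once this linearization is in hand, the rest of the argument is formal: flatness is built into the definition, the identification of the central fiber is by construction, and the commutativity of $\tau$ with $\mathbf G_0$ in ${\rm GL}(H^0(X,L^r))$ is immediate from the commutativity of the actions on $\mathcal X$ via the functoriality of $\pi_*$.
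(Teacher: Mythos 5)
Your proposal is correct and follows essentially the same route as the paper: the forward direction takes the closure of the $\tau(s)$-orbit of $X$ in $\mathbb P(H^0(X,L^r))\times\mathbb C$ with $\mathcal L$ the restriction of $\mathcal O(1)$, and the converse uses that $\pi_*\mathcal L$ is a $\mathbf G_0\times\mathbb C^*$-equivariantly trivializable vector bundle over $\mathbb C$ whose induced linear $\mathbb C^*$-action on the fiber gives the commuting 1-PS. You supply more detail than the paper does (flatness via constancy of the Hilbert polynomial, cohomology and base change, the linearization of $\mathcal L$), but the underlying argument is the same.
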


\begin{proof}
Given a 1-PS $\tau(s)$ in ${\rm GL}(H^0(X,L^r)$ commuting with $\mathbf G_0$, let $\mathcal X$ be the closure of $\tau(s)\cdot X$ in ${\rm P}(H^0(X,L^r))\times \mathbb C$. Since $\tau(s)$ commutes with $\mathbf G_0$, the action of $\mathbf G_0$ on ${\rm P}(H^0(X,L^r))$ preserves each $\tau(s)\cdot X$. Denote by $\mathcal L$ be the pull back of the line bundle $\mathcal O(1)$ on ${\rm P}(H^0(X,L^r))$. Then $(\mathcal X, \mathcal L)$ is a $\mathbb C^*$-degeneration of $(X,L)$.

Conversely, since $\pi$ is flat, $\pi_*\mathcal L$ is vector bundle $E$ over $\mathbb C$ when $r$ is large enough. This vector bundle is $\mathbf G_0\times \mathbb C^*$ equivariantly isomorphic to $H^0(X,L^r)\times \mathbb C$. The $\mathbb C^*$ action on the fiber $E_0\cong H^0(X,L^r)$ gives a 1-PS $\tau(s)$ which commutes with $\mathbf G_0$. The action of $\tau(s)$ is the original $\mathbb C^*$ action on $(\mathcal X,\mathcal L)$. Thus we have proved the 1-1 correspondence between 
$\mathbf G_0$-equivariant $\mathbb C^*$-degenerations and  1-PS's commuting with $\mathbf G_0$.
\end{proof}

In this paper, we will always assume that  $X$ is  a $\mathbb Q$-Fano variety and $L=K^{-r_0}$ for some integer $r_0>0$.  If the center  $\mathcal X_0$  of  $\mathbb C^*$-degeneration   $(\mathcal X,\mathcal L)$  is a $\mathbb Q$-Fano variety, 
 we say that $(\mathcal X,\mathcal L)$ is special \cite{Ti97}.

Let  $\tau(s)$ be  the  $1$-PS in ${\rm GL}(H^0(X,K_X^{-rr_0}), \mathbb C)$  associated to  $(X, rK^{-r_0}_X)$  in Lemma \ref{1-ps-correponding} such that
 $$\mathcal X_0=\lim_{s\rightarrow 0}\tau(s)X$$
   is a 
$\mathbb Q$-Fano variety. Then $\tau(s)$ induces a holomorphic VF on $\mathcal X_0$. The canonical lifting of $\tau(s)$ is another 1-PS in ${\rm GL}(H^0(M,K_X^{-rr_0}), \mathbb C)$, denoted by $\tilde\tau(s)$.  It is easy to see that  $\tau(s)=\tilde \tau(s)s^c$  for some integer $c$.  Moreover, we have  

\begin{lem}\label{anti}
$$\bar{\mathcal L}=K^{-1}_{\mathcal X/{\mathbb P}^1}+c\mathcal X_0.$$ 
\end{lem}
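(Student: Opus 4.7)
The plan is to compare $\bar{\mathcal L}$ and the relative anticanonical $\mathbb Q$-line bundle $K^{-1}_{\bar{\mathcal X}/\mathbb P^1}$ by exploiting: (i) they agree on the generic fiber; (ii) $\bar{\mathcal X}$ is trivialized near $\mathcal X_\infty$; and (iii) a $\mathbb C^*$-weight computation at $\mathcal X_0$ extracts the coefficient $c$.

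\textbf{Step 1 (reduction to a vertical correction).} By construction (with the appropriate rational normalization absorbing the factor $rr_0$) $\bar{\mathcal L}$ restricts to a (rational multiple of) $-K_{X_t}$ on each smooth fiber $\mathcal X_t$ with $t\neq 0,\infty$, and so does $K^{-1}_{\bar{\mathcal X}/\mathbb P^1}$ by adjunction. Hence the $\mathbb Q$-line bundle $\bar{\mathcal L}-K^{-1}_{\bar{\mathcal X}/\mathbb P^1}$ is trivial on every such fiber. Using flatness of $\pi$ and the fact that any divisor on $\bar{\mathcal X}$ that is numerically trivial on the generic fiber is supported on the singular fibers, I would write
\begin{equation*}
\bar{\mathcal L}\;=\;K^{-1}_{\bar{\mathcal X}/\mathbb P^1}+a\,\mathcal X_0+b\,\mathcal X_\infty
\end{equation*}
for some rationals $a,b$.

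\textbf{Step 2 (killing the contribution at $\infty$).} Recall $\bar{\mathcal X}$ is obtained by gluing $\mathcal X$ to the trivial family $X\times(\mathbb P^1\setminus\{0\})$ equipped with the bundle $\mathrm{pr}_1^{*}L^{r}$ and the canonical $\mathbb C^*$-action on the second factor. On this trivial piece $\bar{\mathcal L}$ agrees with the pull-back of $L^{r}$, which equals $K^{-1}_{\bar{\mathcal X}/\mathbb P^1}$ (again after our normalization). So the two sides agree in a neighborhood of $\mathcal X_\infty$, forcing $b=0$.

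\textbf{Step 3 (pinning down $a$ by weights at $\mathcal X_0$).} I would restrict both sides of the equation of Step 1 to $\mathcal X_0$. Using $K^{-1}_{\bar{\mathcal X}/\mathbb P^1}|_{\mathcal X_0}=K^{-1}_{\mathcal X_0}$ and the identification
\begin{equation*}
\mathcal O(\mathcal X_0)|_{\mathcal X_0}\;=\;N_{\mathcal X_0/\bar{\mathcal X}}\;=\;\pi^{*}T_{0}\mathbb P^{1}\otimes \mathcal O_{\mathcal X_0},
\end{equation*}
the restriction $\bar{\mathcal L}|_{\mathcal X_0}$ is isomorphic to $K^{-1}_{\mathcal X_0}$ as a line bundle, but carries an extra $\mathbb C^*$-weight of $a$ coming from $T_{0}\mathbb P^{1}$, whose weight is $1$ in the sign convention that defines $c$. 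Thus the action induced by $\tau(s)$ on $H^{0}(\mathcal X_0,\bar{\mathcal L}|_{\mathcal X_0})$ differs from the canonically lifted action (which is, by definition, $\tilde\tau(s)$) precisely by the scalar $s^{a}$ on every section. Comparing with $\tau(s)=\tilde\tau(s)s^{c}$ yields $a=c$, and the lemma follows.

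\textbf{Main obstacle.} The nontrivial step is Step~3: one must carefully match the sign convention used to define the canonical lifting $\tilde\tau(s)$ with the convention used to compute the $\mathbb C^*$-weight of the normal bundle $N_{\mathcal X_0/\bar{\mathcal X}}$. Getting the sign right (so that $a=+c$ rather than $-c$) is the only delicate point; once that is fixed, Steps 1 and 2 are essentially formal consequences of flatness and the structure of the compactification.
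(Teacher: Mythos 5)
The paper states Lemma~\ref{anti} with no proof at all, so there is nothing to compare your argument against; judged on its own, your outline is the standard argument and is correct in substance, but two points need tightening before it is a proof. First, the decomposition in Step~1, $\bar{\mathcal L}=K^{-1}_{\bar{\mathcal X}/\mathbb P^1}+a\,\mathcal X_0+b\,\mathcal X_\infty$, is not well defined at the level of divisor classes: $\mathcal X_0$ and $\mathcal X_\infty$ are both fibres of $\pi$ and hence linearly equivalent on $\bar{\mathcal X}$, so only $a+b$ is determined and your Step~2 ``forcing $b=0$'' has no content as stated. The coefficient becomes meaningful only after you fix the canonical isomorphism of the two bundles over $\bar{\mathcal X}\setminus\mathcal X_0$ (coming from the equivariant trivialization $(\mathcal X,\mathcal L)|_{\mathbb C^*}\cong(X\times\mathbb C^*,\mathrm{pr}_1^*L^r)$ glued over $\infty$) and define the correction term as the divisor of that isomorphism, viewed as a rational section of $\bar{\mathcal L}\otimes K_{\bar{\mathcal X}/\mathbb P^1}^{\otimes rr_0}$; equivalently, read the whole identity as one of $\mathbb C^*$-linearized $\mathbb Q$-line bundles. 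With that convention Steps~1--2 are immediate and the entire content of the lemma sits in Step~3. Second, Step~3 correctly identifies the mechanism --- $c$ is by definition the character by which $\tau$ differs from the canonical lift $\tilde\tau$ on sections over the central fibre, while twisting by $a\,\mathcal X_0$ shifts the induced weight by $a$ times the weight of $N_{\mathcal X_0/\bar{\mathcal X}}=\pi^*T_0\mathbb P^1$ --- but you explicitly leave the sign verification open, and since that verification is the only nontrivial assertion in the lemma (everything else being formal), it should be carried out rather than flagged. You are also right that the statement needs the normalization identifying $\mathcal L$ with $K^{-rr_0}$, and you use adjunction $K_{\bar{\mathcal X}/\mathbb P^1}|_{\mathcal X_0}=K_{\mathcal X_0}$ for the normal $\mathbb Q$-Fano central fibre implicitly; both are standard but worth recording.
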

We can write a 1-PS $\tau(s)(s\in \mathbb C^*)$ as 
\begin{align}\tau(s)=\begin{bmatrix}
s^{k_1 } &0&...& 0  \\
0&s^{k_2 } &...& 0   \\
...&...&...&...\\
0&0 &...& s^{k_n }
\end{bmatrix}\notag
 \end{align}
where $k_i(1\leq i\leq n)$ are integers. Let $t=-\log |s|$. This matrix is equal to 
\begin{align}\tau(s)=\begin{bmatrix}
e^{-tk_1 } &0&...& 0  \\
0&e^{-tk_2 } &...& 0   \\
...&...&...&...\\
0&0 &...& e^{-tk_n }
\end{bmatrix}. \notag
 \end{align}

Now we consider general real $1$-PS $\sigma(t)$ in  ${\rm GL}(H^0(X,L^r), \mathbb R)$, where
\begin{align}\label{real-eigenvalues}\sigma(t)=\begin{bmatrix}
e^{-\lambda_1 t} &0&...& 0  \\
0&e^{-\lambda_2 t} &...& 0   \\
...&...&...&...\\
0&0 &...& e^{-\lambda_n t}
\end{bmatrix}
 \end{align}
   with respect to some basis of $H^0(X,L^r)$ and $\lambda_i \in \mathbb R$ are  the eigenvalues of $\sigma(t)$.

\begin{defi}\label{r} We call the pair $(X,  \sigma(t))$ a 1-PS  degeneration of  
 $(X, L)$. 
  For simplicity, 
  we denote it by $\mathcal X^\sigma$ and  the limit 
   $$\mathcal X^\sigma_0:=\lim_{t\rightarrow \infty}\sigma(t)X$$
is called the central fiber of $\mathcal X^\sigma$.
  If the  algebraic group $\mathbf G_0$ acts on $(X,L)$ and $\sigma(t)$ commutes with the induced action of $\mathbf G_0$ on $H^0(X,L^r)$, then we say that  $\mathcal X^\sigma$ is a $\mathbf G_0$-equivariant  1-PS  degeneration. 
\end{defi}

\begin{rem}
Given any holomorphic VF $v$ on ${\rm P}(H^0(X,L^r), \mathbb C)$,   $v$ corresponds to an element in  ${\rm gl}(H^0(X,L^r), \mathbb C)$.  Suppose that $e^{t{\rm Im}(v)} $ generalizes a compact 1-PS of  ${\rm GL}(H^0(X,L^r), \mathbb C)$.   Then  $(X, e^{t{\rm Real}(v)})$ is a 1-PS  degeneration of  $(X,  L)$. This induced  $1$-PS  degeneration was first considered by Ding-Tian in \cite{DT} when the limit  of  $e^{tv} X$ is a  normal variety.
 \end{rem}
 
\begin{rem}\label{rps}If a holomorphic VF  $v$ is diagonalized as $v=\sum_{i=1}^n a_i {z_i}\frac{\partial}{\partial z_i}$, then $e^{t{\rm Im}(v)} $ generalizes a compact subgroup if and only if $a_i$ are real numbers. So we denote $e^{t{\rm Real}(v)}$ by $e^{tv}$ without confusion in the sequel.
 \end{rem}

The limit in Definition \ref{r} is the flat limit which can be defined as follows. Denote the ideal defining $X$ in ${\rm P}(H^0(X,L^r), \mathbb C)$ by $I$. For $f=\sum_\alpha c_\alpha x^\alpha$, put $$\langle \alpha,\lambda \rangle=\sum \alpha_i\lambda_i, \nu(f)=\min_{c_\alpha\neq 0}\langle \alpha,\lambda \rangle$$ and $Lt(f)=\sum_{\langle \alpha,\lambda \rangle=\nu(f)}c_\alpha x^\alpha$. Then $I_0=\{Lt(f)|f\in I\}$ is the defining ideal of $\mathcal X^\sigma_0$. It's same as 
$\lim_{t\rightarrow \infty}\sigma(t)\cdot [X]=[\mathcal X^\sigma_0]$, where $[\cdot]$ is the Hilbert point of a closed subscheme. A  $1$-PS  degeneration
can also be defined using filtrations as in \cite{DatS, HL}.

Given $n$ real numbers $\lambda_1\leq \lambda_2\leq ...\leq \lambda_n$, we say it's of type $(e_1,e_2,...,e_k)$ if $\lambda_1=\lambda_2=...=\lambda_{e_1}<\lambda_{e_1+1}=\lambda_{e_1+2}=...=\lambda_{e_1+e_2}<...$.

\begin{lem}\label{sim}
Let  
$$\sigma(t)=\begin{bmatrix}
e^{-\lambda_1 t} &0&...& 0  \\
0&e^{-\lambda_2 t} &...& 0   \\
...&...&...&...\\
0&0 &...& e^{-\lambda_n t} 
\end{bmatrix}$$  
and 
 $$\tau(t)=\begin{bmatrix}
e^{-\lambda'_1 t} &0&...& 0  \\
0&e^{-\lambda'_2 t} &...& 0   \\
...&...&...&...\\
0&0 &...& e^{-\lambda'_n t} 
\end{bmatrix}$$
   be two real $1$-PS in $\mathbb CP^{n-1}$. If $\lambda_1\leq \lambda_2\leq ...\leq \lambda_n$ and  $\lambda'_1\leq \lambda'_2\leq ...\leq \lambda'_n$ are of the same type, then for any $x=[x_1,x_2,...,x_n]\in \mathbb CP^{n-1}$, 
    $$\lim_{t\rightarrow \infty} \sigma(t)(x)=\lim_{t\rightarrow \infty} \tau(t)(x).$$
\end{lem}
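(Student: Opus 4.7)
The plan is to reduce everything to an explicit coordinate computation and to observe that the resulting limit in projective space is determined only by the \emph{block structure} of the weights, not by their numerical values.

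First I would group the indices according to the common type. Since $\lambda_1 \le \dots \le \lambda_n$ is of type $(e_1,\dots,e_k)$, let $B_j = \{e_1 + \dots + e_{j-1}+1,\dots, e_1+\dots+e_j\}$ for $j=1,\dots,k$, and write $\mu_1 < \mu_2 < \dots < \mu_k$ for the corresponding strictly increasing values of $\sigma$'s exponents; similarly $\mu'_1 < \dots < \mu'_k$ for $\tau$. Given $x=[x_1,\dots,x_n]\in\mathbb{CP}^{n-1}$, let $j^*$ be the smallest index $j$ for which the subvector $x_{B_j}:=(x_i)_{i\in B_j}$ is nonzero; such $j^*$ exists since $x\neq 0$.

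Next I would compute $\sigma(t)\cdot x$ componentwise: on block $B_j$ the action is multiplication by $e^{-\mu_j t}$. The coordinates in $B_1,\dots,B_{j^*-1}$ vanish by the choice of $j^*$, so after rescaling by $e^{\mu_{j^*} t}$ (which is allowed in projective space) one obtains
\begin{equation*}
\sigma(t)\cdot x \;=\; \bigl[\,0,\dots,0,\; x_{B_{j^*}},\; e^{-(\mu_{j^*+1}-\mu_{j^*})t}\,x_{B_{j^*+1}},\;\dots,\; e^{-(\mu_k-\mu_{j^*})t}\,x_{B_k}\,\bigr].
\end{equation*}
Because $\mu_{j^*+1}-\mu_{j^*},\dots,\mu_k-\mu_{j^*}$ are strictly positive, the exponential factors tend to $0$ as $t\to\infty$, and hence
$$\lim_{t\to\infty}\sigma(t)\cdot x \;=\; [\,0,\dots,0,\;x_{B_{j^*}},\;0,\dots,0\,].$$

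The key point is that the right-hand side depends solely on $j^*$ and on the subvector $x_{B_{j^*}}\in \mathbb{C}^{e_{j^*}}$, both of which are determined by the type $(e_1,\dots,e_k)$ together with $x$, and not by the particular values of the $\mu_j$. Running the identical computation with $\sigma$ replaced by $\tau$ (using $\mu'_{j^*+1}-\mu'_{j^*}>0$, etc.) yields the same limit point, which gives the desired equality. There is no real obstacle here; the only thing to be careful about is that the choice of $j^*$ uses the block structure, so ``same type'' is exactly the hypothesis needed for the two rescalings to pick out the same nonzero block of $x$.
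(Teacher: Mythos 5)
Your proof is correct and is essentially the paper's own argument: both identify the first block (in the common type decomposition) containing a nonzero coordinate of $x$, rescale by the dominant weight, and observe that the limit retains exactly that block and kills the rest, so it depends only on the type and on $x$. Your write-up is just a more explicit version of the same computation.
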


\begin{proof}
Let $i_0$ be the smallest index such that $x_i\neq 0$ and $e_1+...+e_s<i_0\leq e_1+...+e_{s+1}$. Then 
$$\lim_{t\rightarrow \infty} \sigma(t)(x)=[0,...,0,x_{i_0},x_{i_0+1},...,x_{e_1+...+e_{s+1}},0,...,0]=\lim_{t\rightarrow \infty} \tau(t)(x).$$
\end{proof}

Applying the above lemma to the Hilbert point $[X]$ of $X$, we have

\begin{lem}\label{appr1}
Let
 $$\sigma(t)=\begin{bmatrix}
e^{-\lambda_1 t} &0&...& 0  \\
0&e^{-\lambda_2 t} &...& 0   \\
...&...&...&...\\
0&0 &...& e^{-\lambda_n t}
\end{bmatrix}$$
 be  a real $1$-PS as in (\ref{real-eigenvalues}).  Then for any positive number $\epsilon$, there exists a sequence of $\lambda'_j\in \mathbb Q$ such that $|\lambda'_j-\lambda_j|\leq \epsilon \,(1\leq j\leq n)$ and
  $$\lim_{t\rightarrow \infty}\sigma(t)\cdot [X]= \lim_{t\rightarrow \infty}\tau(t)\cdot [X],$$
where $\tau(t)$ is the real $1$-PS  as in   (\ref{real-eigenvalues}) with  eigenvalues $\lambda'_i$.
\end{lem}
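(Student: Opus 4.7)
The plan is to reduce the statement to Lemma \ref{sim} applied on the ambient projective space containing the Hilbert point $[X]$. The Hilbert point lies in some $\mathbb{CP}^N$ (obtained, e.g., via a Pl\"ucker embedding of an appropriate Grassmannian in $H^0(\mathbb{CP}^{n-1},\mathcal{O}(d))$ for $d$ large), and $\sigma(t)$ acts on this $\mathbb{CP}^N$ through weights $\mu_1,\dots,\mu_{N+1}$, each of which is an integer linear combination of $\lambda_1,\dots,\lambda_n$ (sums of monomial weights on $H^0$, then sums of $P(d)$ of those for the Pl\"ucker embedding). If the weights $\mu'_j$ obtained by substituting $\lambda'_j$ for $\lambda_j$ have the same type as the $\mu_j$ after sorting, then Lemma \ref{sim} immediately gives $\lim_{t\to\infty}\sigma(t)\cdot [X]=\lim_{t\to\infty}\tau(t)\cdot [X]$. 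So it suffices to produce rational $\lambda'_j$ close to $\lambda_j$ that preserve the type of this finite collection of induced weights.

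Preservation of type amounts to two conditions on each pair $(i,j)$: (i) every strict inequality $\mu_i<\mu_j$ remains $\mu'_i<\mu'_j$, and (ii) every equality $\mu_i=\mu_j$ remains $\mu'_i=\mu'_j$. Since there are only finitely many pairs, (i) is automatic from continuity once $\epsilon$ is chosen sufficiently small. The substance of the lemma is (ii): because $\mu_i-\mu_j$ is an integer linear combination of the $\lambda_k$, we must preserve every integral linear relation among the $\lambda_k$. To this end, introduce
$$R_\lambda=\Bigl\{a\in\mathbb{Q}^n:\sum_k a_k\lambda_k=0\Bigr\}\subset \mathbb{Q}^n,$$
and let $V_\lambda=R_\lambda\otimes_{\mathbb{Q}}\mathbb{R}\subset \mathbb{R}^n$, which is a $\mathbb{Q}$-rational subspace. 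Its orthogonal complement $V_\lambda^{\perp}\subset \mathbb{R}^n$ is likewise $\mathbb{Q}$-rational, contains $\lambda$, and therefore has dense rational points. Choose $\lambda'\in\mathbb{Q}^n\cap V_\lambda^{\perp}$ with $|\lambda'_j-\lambda_j|\leq \epsilon$; then by construction every $a\in R_\lambda$ satisfies $\sum_k a_k\lambda'_k=0$, so every integral relation among the $\lambda_k$ is inherited by the $\lambda'_k$, establishing (ii).

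The main obstacle is not computational but conceptual: what must be transferred from $\lambda$ to $\lambda'$ is not merely the ordering of the induced weights but the exact equality pattern among all relevant integer linear combinations. That this can be arranged hinges precisely on the rationality of $V_\lambda$ and the density of $\mathbb{Q}^n$ in the rational subspace $V_\lambda^{\perp}$. Once the above $\lambda'$ is fixed, both (i) and (ii) hold, the $\mu'_j$ have the same type as the $\mu_j$, and Lemma \ref{sim} applied to $[X]\in\mathbb{CP}^N$ closes the argument.
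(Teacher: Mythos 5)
Your proof is correct and follows essentially the same route as the paper: reduce to Lemma \ref{sim} via the Pl\"ucker/Hilbert-point embedding, note that the induced weights are integer linear combinations of the $\lambda_j$, and perturb $\lambda$ to a rational point preserving the equality/inequality pattern. Your use of the rational subspace $V_\lambda^{\perp}$ merely makes explicit the step the paper asserts without detail (that a rational perturbation preserving all the integer-coefficient relations exists), so there is no substantive difference.
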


\begin{proof}
At first, we recall the construction of the Hilbert point. For $X\subseteq \mathbb CP^{n-1}$, there is an integer number $m$ depending on the Hilbert polynomial $p$ of $X$ such that the defining ideal $I$ of $X$ is generated by homogeneous polynomials in $I$ of degree $m$. Denote the latter by $I_m$ and the whole space of homogeneous polynomials of degree $m$ by $V$. Then $\dim V=\binom{n+m-1}{n-1} $ and $\dim I_m=\binom{n+m-1}{n-1}-p(m):=k$ by the definition of Hilbert polynomial. So $I_m$ corresponds to a point in the Grassmann manifold $G_r(k,V)$. We can embed the Grassmann into $\mathbb P(\Lambda^{k}V)$ using Pl\"ucker coordinates and the corresponding point of $I_m$ is the Hilbert point of $X$. 

More precisely let $I_m=span\{h_1,h_2,...,h_{k}\}$, then $[X]$ is the point $[h_1\wedge h_2\wedge...\wedge h_{k}]\in \mathbb P(\Lambda^{k}V)$. For the standard base $x_1^{\alpha_1}x_2^{\alpha_2}...x_n^{\alpha_n} (\sum_{i=1}^n \alpha_i=m)$ of $V$, the weight of induced action for $\sigma$ is $\langle \lambda, \alpha\rangle:= \sum_{i=1}^n \lambda_i \alpha_i$. So   the weights on $\Lambda^{k}V$ are given by $\sum_{j\in I}\langle \lambda, \alpha^{(j)}\rangle$ for $|I|=k$ and $k$ different $\alpha^{(j)}$. It follows that the type $(f_1,f_2,...)$ of the induced action of $\sigma$ is determined by 
\begin{align}\label{a1}
\sum_{j\in I_1}\langle \lambda, \alpha^{(j)}\rangle=\sum_{j\in I_2}\langle \lambda, \alpha^{(j)}\rangle=...=\sum_{j\in I_{f_1}}\langle \lambda, \alpha^{(j)}\rangle<\sum_{j\in I_{f_1+1}}\langle \lambda, \alpha^{(j)}\rangle...
\end{align}
All the equations in (\ref{a1}) are of integer coefficients. So we can perturb $\lambda_i$ to rational points $\lambda_i'$ preserving these equations and inequalities, i.e., the type of the eigenvalues of the induced actions of $\sigma(t)$ and $\tau(t)$ are the same. Thus the lemma is proved by Lemma \ref{sim}.
\end{proof}

For the equivariant case, we  have

\begin{prop}\label{RTC-appr}
Assume that there is an action of  reductive  group $\mathbf G_0$ on $(X,L)$. Let $\sigma(t)$ be a $\mathbf G_0$-equivariant real 1-PS. Then there is a sequence of $\mathbf G_0$-equivariant   $1$-PS  degenerations $\tau_i(t)$ with rational eigenvalues which converges to the eigenvalues of $\sigma(t)$ such that $\mathcal X_0^{\tau_i}=\mathcal X_0^{\sigma}$.  As a consequence, there is a   $\mathbf G_0$-equivariant   $\mathbb C^*$-degeneration $\tau_i(t)$ whose central fiber is  same as $\mathcal X_0^\sigma$.
\end{prop}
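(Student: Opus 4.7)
The plan is to refine the argument of Lemma \ref{appr1} so that the rational approximation also preserves commutation with $\mathbf G_0$, and then invoke Lemma \ref{1-ps-correponding} for the consequence. The crucial observation is that since $\sigma(t)$ commutes with $\mathbf G_0$, every eigenspace of $\sigma(t)$ is $\mathbf G_0$-invariant; consequently, if I perturb only the \emph{distinct} eigenvalues of $\sigma$, rather than each diagonal entry independently, the perturbed operator retains the same eigenspace decomposition and therefore automatically commutes with $\mathbf G_0$.

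Concretely, I would choose a basis of $H^0(X,L^r)$ in which $\sigma(t)=\mathrm{diag}(e^{-\lambda_1 t},\ldots,e^{-\lambda_n t})$ with $\lambda_1\leq\cdots\leq\lambda_n$ grouped into blocks of equal values $\mu_1<\mu_2<\cdots<\mu_s$ of multiplicities $e_1,\ldots,e_s$, and write $E_j$ for the $\mu_j$-eigenspace of $\sigma$; each $E_j$ is $\mathbf G_0$-invariant. Revisiting the proof of Lemma \ref{appr1}, the system of equalities and strict inequalities \eqref{a1} controlling the type of the induced action on $\Lambda^k V$ has integer coefficients in $\lambda_1,\ldots,\lambda_n$. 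Substituting $\lambda_i=\mu_{p(i)}$, where $p(i)\in\{1,\ldots,s\}$ denotes the block index of $i$, these become linear conditions with integer coefficients on $(\mu_1,\ldots,\mu_s)$. Since $(\mu_1,\ldots,\mu_s)$ satisfies the equalities and lies in the interior of the inequalities, for any $\epsilon>0$ there exist rationals $\mu_j'$ with $|\mu_j'-\mu_j|<\epsilon$ satisfying the same relations.

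Setting $\lambda_i':=\mu'_{p(i)}$ and $\tau(t):=\mathrm{diag}(e^{-\lambda_1' t},\ldots,e^{-\lambda_n' t})$ in the chosen basis, $\tau(t)$ shares the eigenspace decomposition $H^0(X,L^r)=\bigoplus_j E_j$ of $\sigma(t)$ and acts on $E_j$ as the scalar $e^{-\mu_j' t}$, so $\tau(t)$ commutes with the $\mathbf G_0$-action. Because \eqref{a1} is preserved, the induced 1-PS of $\sigma(t)$ and $\tau(t)$ on $\Lambda^k V$ are of the same type, and Lemma \ref{sim} applied to the Hilbert point $[X]\in\mathbb P(\Lambda^k V)$ yields $\mathcal X_0^\tau=\mathcal X_0^\sigma$. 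Taking $\epsilon\to 0$ produces the required sequence $\tau_i$ whose eigenvalues converge to those of $\sigma$.

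For the consequence, since each $\tau_i$ has rational eigenvalues, I would pick $N_i\in\mathbb Z_{>0}$ clearing all denominators and form the $\mathbb C^*$-subgroup $\tilde\tau_i(s):=\mathrm{diag}(s^{N_i\lambda_1'},\ldots,s^{N_i\lambda_n'})$ of $\mathrm{GL}(H^0(X,L^r),\mathbb C)$; it commutes with $\mathbf G_0$ and satisfies $\lim_{s\to 0}\tilde\tau_i(s)\cdot X=\mathcal X_0^{\tau_i}=\mathcal X_0^\sigma$, so Lemma \ref{1-ps-correponding} produces the desired $\mathbf G_0$-equivariant $\mathbb C^*$-degeneration. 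The main technical point I anticipate is the simultaneous control of rationality and $\mathbf G_0$-equivariance; this is handled precisely by restricting the perturbation to the $s$-dimensional space of distinct eigenvalues, which keeps the eigenspace decomposition of $\sigma$ intact and hence lies inside its centralizer in $\mathrm{GL}(H^0(X,L^r),\mathbb C)$.
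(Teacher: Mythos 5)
Your proposal is correct and follows essentially the same route as the paper: the paper likewise enforces that the perturbed eigenvalues $\lambda'$ have the same type (i.e., the same block/eigenspace structure) as $\lambda$ by adding the corresponding integer-coefficient equalities and inequalities to the system \eqref{a1}, which is exactly your device of perturbing only the distinct eigenvalues $\mu_1<\cdots<\mu_s$; this keeps the eigenspaces $\mathbf G_0$-invariant and hence the perturbed 1-PS in the centralizer of $\mathbf G_0$. The passage to a genuine $\mathbb C^*$-degeneration by clearing denominators and invoking Lemma \ref{1-ps-correponding}, with Lemma \ref{sim} guaranteeing the central fiber is unchanged, also matches the paper's argument.
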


\begin{proof}
$\sigma(t)$ commutes with $G_0$ if and only if the eigenspace of $\sigma(1)$ are all $\mathbf G_0$-invariant. In the proof of Lemma \ref{appr1}, we can also require that $\lambda'$ and $\lambda$ are of the same type by adding the corresponding equations and inequalities to (\ref{a1}). Then the eigenspaces of $\lambda'$ and $\lambda$ are the same. Consequently the real 1-PS $\tau(t)$ corresponding to $\lambda'$ also commutes with $\mathbf G_0$. Since $\lambda'$ can be close to $\lambda$ as small as we want, we get a sequence of $\tau_i(t)$ with rational eigenvalues which converges to the eigenvalues of $\sigma(t)$. For any $\tau_i$, there is a sufficiently divisible integer $N$ such that $\tau_i(Nt)$ defines a   $\mathbf G_0$-equivariant   $\mathbb C^*$-degeneration $(\mathcal X, \mathcal L)$ by Lemma  \ref{1-ps-correponding}. Moreover,  by Lemma \ref{sim}, 
the central fiber of $(\mathcal X, \mathcal L)$ is just   $\mathcal X_0^\sigma$.  The proposition is proved.
\end{proof}

Let $X$ be a $\mathbb Q-$Fano variety.  For a   $1$-PS  degeneration $\mathcal X^\sigma$ such that $\mathcal X^\sigma_0$ is $\mathbb Q$-Fano, $\sigma(t)$ induces a holomorphic VF $\eta(\sigma)$ on $\mathcal X^\sigma_0$.  Then by the canonical lifting of $\eta(\sigma)$ on $K_{\mathcal X^\sigma_0}^{-1}$, we get another real 1-PS $\tilde \sigma(t)$.  By Lemma \ref{anti},   $\tilde\sigma(t)$ is equal to $\sigma(t)$ up to a constant .  We say that $\tilde \sigma(t)$ is the normalized real 1-PS corresponding to $\sigma(t)$. We end this section by the following definition.

\begin{defi}
Let $X$ be a $\mathbb Q$-Fano variety.  A   $1$-PS  degeneration  $\mathcal X^\sigma$ of $X$ is called special if $\mathcal X^\sigma_0$ is $\mathbb Q$-Fano and $\sigma(t)=\tilde \sigma(t)$.
\end{defi}

\section{Equivariant K\"ahler-Ricci flow}

In this section, we 
  prove an equivariant version of Chen-Sun-Wang's result  (see an explicit statement in  \cite[Theorem 3.4]{DS16})  for the KR flow  in \cite{CSW}.   Let   $(X_\infty, \omega_\infty)$ be the  limit of KR flow  (\ref{kr-flow}) in  the HT conjecture and   $\mathbf G_0$ the   reductive subgroup of automorphisms  group ${\rm Aut}(X)$ as in Section 2.  We need to  prove 

\begin{thm}\label{th1} $X_\infty$ is a center  of  special $\mathbf {G_0}\times  \mathbb {C}^*$-equivariant  $\mathbb C^*$-degeneration of another  $\mathbb Q$-Fano    variety  $\bar X_\infty$ which is a center of  special  $\mathbf {G_0}$-equivariant  $1$-PS  degeneration of $X_\infty$  induced   by the soliton VF  $v$ of  $(X_\infty, \omega_\infty)$.
\end{thm}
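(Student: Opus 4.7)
The plan is to upgrade the non-equivariant construction of Chen--Sun--Wang \cite{CSW} so that every degeneration carries a compatible $\mathbf{G_0}$-action; the key inputs are Zhang's partial $C^0$-estimate \cite{ZhK}, the uniqueness of the soliton VF, and the equivariant approximation Proposition~\ref{RTC-appr}.

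First I would fix a sufficiently divisible $r$ so that the partial $C^0$-estimate furnishes uniform Kodaira embeddings $\psi_t : X \hookrightarrow \mathbb{CP}^{N_r}$ along the flow, with the $L^2_{\omega_t}$-orthonormal basis of $H^0(X, K_X^{-r})$ chosen compatibly with the $\mathbf{K_0}$-isotypic decomposition. Since $\omega_0$ is $\mathbf{K_0}$-invariant, each $\omega_t$ is $\mathbf{K_0}$-invariant and every Hilbert point $[\psi_t(X)]$ is $\mathbf{K_0}$-fixed, while $\mathbf{G_0}$ sits as a commuting subgroup of $\mathrm{GL}(H^0(X, K_X^{-r}))$. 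Passing to a convergent subsequence $[\psi_{t_i}(X)] \to [X_\infty]$ via Hamilton--Tian convergence (cf.~\cite{Bam, Chwang, WZ20}), the embedded limit $X_\infty \subset \mathbb{CP}^{N_r}$ is automatically $\mathbf{G_0}$-stable.

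Next I would construct the $1$-PS degeneration $X_\infty \rightsquigarrow \bar X_\infty$. The soliton VF $v \in \aut(X_\infty)$ is uniquely determined by the modified Futaki invariant, hence commutes with $\mathbf{G_0}$ (which preserves the flow and so its GH limit together with the soliton structure). Its canonical lift $\tilde v$ to $H^0(X_\infty, K_{X_\infty}^{-r})$ has real eigenvalues (Remark~\ref{rps}), yielding a $\mathbf{G_0}$-equivariant real $1$-PS $\sigma_v(t) = e^{-t\tilde v}$. Setting
\[
\bar X_\infty := \lim_{t \to \infty} \sigma_v(t) \cdot X_\infty,
\]
the central fiber is $\mathbb{Q}$-Fano (standard for specializations generated by the soliton VF) and carries a residual $\mathbb{C}^*$-action from $\tilde v$, while $\sigma_v$ agrees with its canonical normalized lift; thus $\sigma_v$ is a special $\mathbf{G_0}$-equivariant $1$-PS degeneration of $X_\infty$, which is the second half of the theorem.

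For the converse degeneration $\bar X_\infty \rightsquigarrow X_\infty$ I would follow the GIT strategy of \cite{CSW}, considering the modified family of Hilbert points $[\tilde X_t] := \sigma_v(-t) \cdot [\psi_t(X)]$, and show via the partial $C^0$-estimate together with the HT convergence that $[\tilde X_t] \to [\bar X_\infty]$. This places $[\bar X_\infty]$ in the orbit closure of $[X]$ under a commuting product of $1$-PSes, all commuting with $\mathbf{G_0}$. Comparing with the direct limit $[\psi_t(X)] \to [X_\infty]$, one extracts a real $1$-PS degeneration $\bar X_\infty \rightsquigarrow X_\infty$ that commutes with both $\mathbf{G_0}$ and the $\mathbb{C}^*$ generated by $\tilde v$. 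Applying Proposition~\ref{RTC-appr} to this $\mathbf{G_0} \times \mathbb{C}^*$-equivariant real $1$-PS yields rational approximations with the same central fiber $X_\infty$, and a sufficiently divisible integer multiple via Lemma~\ref{1-ps-correponding} produces the desired special $\mathbf{G_0} \times \mathbb{C}^*$-equivariant $\mathbb{C}^*$-degeneration of $\bar X_\infty$ whose center is $X_\infty$. The principal obstacle is exactly this last step -- producing a real $1$-PS $\bar X_\infty \rightsquigarrow X_\infty$ that commutes with the full product $\mathbf{G_0} \times \mathbb{C}^*$. The difficulty is the interplay between the analytic convergence of the KR flow and the algebraic $\mathbb{C}^*$-action of $\tilde v$: one must carefully match the flow parameter $t$ against the $\sigma_v$-parameter and extract a diagonal subsequence whose algebraic limit is precisely $[X_\infty]$. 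Once this real $1$-PS is secured, Proposition~\ref{RTC-appr} promotes it to a rational $\mathbb{C}^*$-degeneration without breaking equivariance.
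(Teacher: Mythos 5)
Your proposal contains one outright error and one acknowledged but unresolved gap, and together they miss the actual mechanism of the proof. The error is the construction of $\bar X_\infty$. You set $\bar X_\infty:=\lim_{t\to\infty}\sigma_v(t)\cdot X_\infty$ with $\sigma_v$ generated by the (lifted) soliton VF $v$ of $(X_\infty,\omega_\infty)$. But $v$ is a holomorphic vector field \emph{on} $X_\infty$, so $e^{tv}\in{\rm Aut}(X_\infty)$ preserves the embedded variety and fixes its Hilbert point; your limit is $[X_\infty]$ itself and the "first" degeneration is trivial. (The statement's phrase ``degeneration of $X_\infty$'' is misleading and at odds with the proof: what is actually shown is that $\bar X_\infty$ is the central fiber of the $1$-PS degeneration of the \emph{original} manifold $X$ generated by $v$, regarded as a diagonal matrix on $E=H^0(X,K_X^{-l})$ through the flow embeddings.) Concretely, the paper writes $\Phi_t=A_t\cdot\Phi_0$ with $A_t$ solving \eqref{ODE-s}, proves $A_t$ commutes with $\mathbf{K_0}$ (Lemma \ref{commication}), uses $A_{t+s}A_t^{-1}\to e^{sv}$ to put the sequence in the form \eqref{p} with $\Lambda=v$, and defines $[\bar X_\infty]=\lim_{t\to\infty}e^{tv}C\cdot[X]$, where $C$ is the correction matrix of Lemma \ref{fil}. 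The whole content of the step is the comparison between the non-group-like family $A_t$ and the genuine real $1$-PS $e^{tv}$, which is exactly what Lemmas \ref{lim}, \ref{fil} and Proposition \ref{alg} supply; your ``modified family'' $\sigma_v(-t)\cdot[\psi_t(X)]$ gestures at this comparison but is aimed at the wrong target object.

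The second issue is the step you yourself flag as ``the principal obstacle,'' namely producing the degeneration $\bar X_\infty\rightsquigarrow X_\infty$ commuting with $\mathbf{G_0}\times\mathbb C^*$: you leave it open, and the route you sketch (extract a real $1$-PS by matching the flow parameter against $\sigma_v$, then rationalize via Proposition \ref{RTC-appr}) is not how the paper closes it. The paper deduces from Proposition \ref{alg} that $[X_\infty]$ lies in the closure of $G_{\mathbf{K_0},v}\cdot[\bar X_\infty]$, invokes the reductivity of ${\rm Aut}^v(X_\infty)$ from \cite[Proposition 7.4]{WZ} so that $G_{\mathbf{K_0},v}\cap{\rm Aut}^v(X_\infty)$ is reductive, and then applies Luna's slice lemma to obtain a genuine algebraic $1$-PS $\tau(s)$, $s\in\mathbb C^*$, inside that reductive group degenerating $\bar X_\infty$ to $X_\infty$; equivariance under $\mathbf{G_0}\times\mathbb C^*$ is automatic because $\tau$ lies in $G_{\mathbf{K_0},v}$. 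No rational approximation is needed here, and Proposition \ref{RTC-appr} plays no role in Theorem \ref{th1} (it enters only in Sections 4--6). Without the orbit-closure statement and the reductivity/Luna input, your extraction of the second degeneration does not go through.
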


\subsection{Algebraic lemmas}

As in \cite{CSW}, we use the GIT method to prove Theorem  \ref{th1}.    Let $E$ be a complex linear space and $\bar G={\rm GL}(E)$. Let $V$ be a representation space of $\bar G$. For a fixed compact subgroup $\mathbf K$  of U(N), we define a subgroup of $\bar G$ by
$$\bar G_{\mathbf K}=\{g\in \bar G|~g\text{ commutes with }\mathbf K\}.$$

Let $A_i~(i=0,1,...)$ be a sequence of matrices in $\bar G_K$ and $\Lambda$ a Hermitian matrix commuting with $\mathbf K$.  We assume that
  \begin{align}\label{p}
  \lim_{i\rightarrow \infty} A_iA_{i-1}^{-1}=e^\Lambda.
  \end{align}
  Denote the eigenvalues of $\Lambda$ by $\mathcal S=\{d_1>d_2>d_3...>d_{k-1}>d_k\}$ and the eigenspaces corresponding to $d_j$ by $U_j$. For $v\in V\setminus \{0\}$, $[v]$ is the corresponding point in the projective space $\mathbb P(V)$. Denote the set of  limit points of $A_{i_k}\cdot [v]$ for any subsequence of $i_k\to\infty$  by ${\rm Lim}_{A_i}[v]$. We will give a characterization of this set.

We fix a  U(N)-invariant metric $|\cdot|$ on $V$.   Set
 $$d(v)=\lim_{i\rightarrow \infty}(\log |A_{i+1} v|-\log |A_{i} v|).$$
   The following lemma is proved in \cite{CSW}.

  \begin{lem}\label{lim}
  Assume that  (\ref{p}) holds.  Then for $v\in V\setminus \{0\}$,
   $$d(v)=\lim_{i\rightarrow \infty}(\log |A_{i+1} v|-\log |A_{i} v|)$$
     exists and belongs to $\mathcal S$. Moreover for any $[w]\in {\rm Lim}_{A_i}[v]$, $w$ belongs to $U_{d(v)}$.
  \end{lem}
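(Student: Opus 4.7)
The plan is to exploit the orthogonal eigenspace decomposition $V = \bigoplus_{j=1}^{k} U_j$, available because $\Lambda$ is Hermitian with respect to the $\mathbf{K}$-invariant metric on $V$, and reduce the perturbed iteration $w_i := A_i v$ to the exact one $w \mapsto e^{\Lambda} w$. Writing $F_i := A_i A_{i-1}^{-1} - e^{\Lambda}$, the hypothesis (\ref{p}) gives $\|F_i\| \to 0$, and we obtain
\[
w_i = e^{\Lambda} w_{i-1} + F_i w_{i-1}, \qquad \Pi_j w_i = e^{d_j}\Pi_j w_{i-1} + \Pi_j F_i w_{i-1}.
\]
Rescaling by $e^{-i d_j}$ yields the telescoping identity $\gamma_j(i) - \gamma_j(i-1) = e^{-i d_j}\Pi_j F_i w_{i-1}$, with $\gamma_j(i) := e^{-i d_j}\Pi_j w_i$; this will be the basic workhorse estimate.

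I would then identify the \emph{dominant index} $j^{\ast}$ as the smallest $j\in\{1,\dots,k\}$ with $\limsup_i e^{-i d_j}|w_i| > 0$. Such a $j^{\ast}$ exists and is finite since the operator norms $\|A_i A_{i-1}^{-1}\|$ are uniformly bounded, which pins the growth rate of $|w_i|$ into $[d_k, d_1]$. The heart of the argument is a downward bootstrap on $j$: using the strict gap $d_j - d_{j^{\ast}} > 0$ for $j < j^{\ast}$ together with $\|F_i\| \to 0$, one shows inductively that $e^{-i d_j}|\Pi_j w_i| \to 0$ for every $j < j^{\ast}$, ruling out any sustained accumulation of mass in a faster-growing eigenspace. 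This yields the a priori bound $|w_i| \le C\, e^{i d_{j^{\ast}}}$ eventually, so the telescope for $j = j^{\ast}$ becomes absolutely summable and produces a nonzero limit $w^{\ast} := \lim_i \gamma_{j^{\ast}}(i) \in U_{j^{\ast}}$ (nonzero by the defining property of $j^{\ast}$).

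Combining these two steps gives $|w_i|/e^{i d_{j^{\ast}}} \to |w^{\ast}| > 0$, whence $\log|w_{i+1}| - \log|w_i| \to d_{j^{\ast}}$; this is the existence of $d(v) = d_{j^{\ast}} \in \mathcal S$. For the direction statement, the rescaled components $e^{-i d_{j^{\ast}}}\Pi_j w_i$ vanish in the limit for $j < j^{\ast}$ by the bootstrap and for $j > j^{\ast}$ automatically because $e^{i(d_j - d_{j^{\ast}})} \to 0$, so $w_i/|w_i| \to w^{\ast}/|w^{\ast}|$ and every $[w] \in {\rm Lim}_{A_i}[v]$ lies in $\mathbb P(U_{j^{\ast}}) = \mathbb P(U_{d(v)})$. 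The main obstacle is the bootstrap excluding faster growth: one must rule out the scenario in which $F_i$ keeps injecting mass into a higher eigenspace $U_j$ with $d_j > d_{j^{\ast}}$. The strict gap $d_j > d_{j^{\ast}}$ would compound such mass at an exponentially larger rate than $e^{i d_{j^{\ast}}}$ and contradict the minimality of $j^{\ast}$; making this rigorous via the telescoping identity and a Gronwall-type summation in $\|F_i\|$ is the technical core of the argument.
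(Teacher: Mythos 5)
The paper gives no proof of this lemma --- it is quoted verbatim from \cite{CSW} --- so the only question is whether your argument stands on its own. Your starting point (orthogonal eigenspace decomposition, the perturbed recursion $w_i=e^{\Lambda}w_{i-1}+F_iw_{i-1}$, a dominant index, and the spectral gaps $d_j-d_{j'}$) is the right one, but there is a genuine gap: every quantitative step of your plan implicitly uses $\sum_i\|F_i\|<\infty$, whereas (\ref{p}) only gives $\|F_i\|\to 0$. Non-summable perturbations multiply $|w_i|$ by factors like $\prod_l(1\pm\|F_l\|)$, which can tend to $0$ or $\infty$ subexponentially, and this breaks three of your steps. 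First, $j^{*}$ as you define it may not exist or may be misidentified: for $V=\mathbb C^2$, $\Lambda=\mathrm{diag}(d_1,d_2)$, $A_i=\mathrm{diag}(e^{id_1}/i,\,e^{id_2})$ and $v=(1,0)$, hypothesis (\ref{p}) holds, the true rate is $d_1$ and all limit points lie in $\mathbb P(U_1)$, yet $\limsup_i e^{-id_1}|w_i|=0$ and $\limsup_i e^{-id_2}|w_i|=\infty$, so your recipe returns $j^{*}=2$; the operator-norm bound only pins the rate into $[d_k-o(1),\,d_1+o(1)]$, which does not rule this out. Second, even with the correct $j^{*}$ and the bound $|w_{l-1}|\le Ce^{ld_{j^{*}}}$ in hand, the telescope terms $e^{-ld_{j^{*}}}\Pi_{j^{*}}F_lw_{l-1}$ are only $O(\|F_l\|)$, which need not be summable, so $w^{*}=\lim_i\gamma_{j^{*}}(i)$ need not exist (already for $k=1$, $\Lambda=0$, $A_i=(i+1)\mathrm{Id}$ one has $\gamma_1(i)\to\infty$; a slow rotation inside $U_{j^{*}}$ by angle $1/i$ even prevents $w_i/|w_i|$ from converging, which is consistent with the lemma but fatal to your final step). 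Third, the inference from ``$e^{-id_j}|\Pi_jw_i|\to0$ for $j<j^{*}$'' to ``$|w_i|\le Ce^{id_{j^{*}}}$'' is a non sequitur: $o(e^{id_j})$ with $d_j>d_{j^{*}}$ is far weaker than $O(e^{id_{j^{*}}})$.

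The repair is to argue with ratios of components rather than with the fixed rescalings $e^{-id_j}$, so that the non-summable drift cancels. Setting $a_j(i)=|\Pi_jw_i|$ and $r_j(i)=a_j(i)/a_{j_0}(i)$ for the currently dominant index $j_0$, the recursion gives $r_j(i+1)\le e^{d_j-d_{j_0}}(1+o(1))\,r_j(i)+C\|F_{i+1}\|$ when $d_j<d_{j_0}$, forcing $r_j(i)\to0$; when $d_j>d_{j_0}$, either $r_j(i)$ stays at scale $O(\sup_{l\ge i}\|F_l\|)$ forever or it grows geometrically and dominance transfers to $j$, which can happen at most $k$ times. This yields an eventually constant dominant index $j^{*}$ with $a_j(i)/|w_i|\to0$ for all $j\ne j^{*}$, whence $\log|w_{i+1}|-\log|w_i|=\log\bigl(a_{j^{*}}(i+1)/a_{j^{*}}(i)\bigr)+o(1)\to d_{j^{*}}$ and every limit point of $[w_i]$ lies in $\mathbb P(U_{j^{*}})$ --- without ever asserting that $e^{-id_{j^{*}}}|w_i|$ or $w_i/|w_i|$ converges. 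This dichotomy argument is, in substance, the proof in \cite{CSW}.
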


Denote $V_{j}=\{v\in V| ~d(v)\leq d_j\}$. It is obvious that $V_i,U_j$ are $\mathbf K$-invariant subspaces.  Denote $\bar G_{\mathbf K,\Lambda}=\{g\in \bar G_{\mathbf K}|~g\cdot e^{s\Lambda}=e^{s\Lambda}\cdot g\}.$ 

  \begin{lem}\label{fil}
  Assume that $V=S^m(E)$ for some $m\geq 1$. Then there exists $C\in\bar G_{\mathbf K}$, which is independent of $m$ such that $C\cdot V_j=\oplus_{i=j}^k U_i$ for all $i$.  There also exists  $C_i\in \bar G_{\mathbf K}$ with $\lim_{i\rightarrow \infty} C_i=Id$ such that
  $\tilde A_i=C_iA_iC^{-1} \in \bar G_{\mathbf K,\Lambda}$.
  \end{lem}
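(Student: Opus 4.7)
The plan is to carry out the whole construction on $E$ itself (the case $m=1$) and then pull it up to $V=S^m(E)$ by functoriality. The argument has two independent ingredients: first I build $C$ by splitting the $A_i$-filtration on $E$ along the $\Lambda$-eigenspaces of $E$, using the limiting identifications from Lemma \ref{lim}; then I build $C_i$ as a small correction that kills the strictly off-diagonal part of $A_i C^{-1}$ relative to the $\Lambda$-decomposition on $E$.

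\textbf{Step 1 (construction of $C$).} Apply Lemma \ref{lim} to the representation $V=E$. Let $E_j=\{v\in E:\,d(v)\le d_j^E\}$ and let $U_j^E$ be the $d_j^E$-eigenspace of $\Lambda|_E$; both are $\mathbf K$-invariant. The map $[v]\mapsto\lim A_iv/\|A_iv\|$ gives an injection $E_j/E_{j+1}\hookrightarrow U_j^E$, which a dimension count forces to be an isomorphism. Since $\mathbf K$ is compact and everything is $\mathbf K$-equivariant, complete reducibility of $\mathbf K$-representations supplies $\mathbf K$-invariant complements $\hat U_j^E\subset E_j$ with $E_j=\hat U_j^E\oplus E_{j+1}$ and $\hat U_j^E\xrightarrow{\sim}U_j^E$. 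Define $C\in\bar G_{\mathbf K}$ by letting $C|_{\hat U_j^E}$ be this isomorphism. Then $C\cdot E_j=\bigoplus_{\ell\ge j}U_\ell^E$. For general $m$, the $\Lambda$-eigenspaces and the growth filtration on $S^m(E)$ are determined multiplicatively from those on $E$ (the eigenvalues on $S^m(E)$ are sums of $m$ eigenvalues on $E$, and $d(u_1\cdots u_m)=d(u_1)+\cdots+d(u_m)$ asymptotically), so the symmetric power of $C$ on $V=S^m(E)$ satisfies $C\cdot V_j=\bigoplus_{\ell\ge j}U_\ell$ for every $m$, which is the independence from $m$.

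\textbf{Step 2 (construction of $C_i$).} In the decomposition $E=\bigoplus U_j^E$, write
\[
A_iC^{-1}=D_i+N_i,
\]
with $D_i$ the block-diagonal part and $N_i$ the off-diagonal part. The crucial estimate is
\[
N_iD_i^{-1}\longrightarrow 0\quad\text{as }i\to\infty.
\]
This follows from the way $C$ was chosen: for $v\in\hat U_j^E$, Lemma \ref{lim} gives $A_iv=\|A_iv\|(u+o(1))$ with $u\in U_j^E$ and $\|A_iv\|\sim e^{id_j^E}$, so the $(\ell,j)$-block of $A_iC^{-1}$ is $o(e^{id_j^E})$ for $\ell\neq j$ while its $(j,j)$-block has exact order $e^{id_j^E}$. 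Setting $C_i:=(I+N_iD_i^{-1})^{-1}$ gives $C_i\in\bar G_{\mathbf K}$, $C_i\to I$, and
\[
C_iA_iC^{-1}=(I+N_iD_i^{-1})^{-1}(D_i+N_i)=(I+N_iD_i^{-1})^{-1}(I+N_iD_i^{-1})D_i=D_i,
\]
which is block diagonal in the $U_j^E$-decomposition and therefore commutes with $e^{s\Lambda}$. All these properties are preserved when $C$ and $C_i$ act on $V=S^m(E)$ via the induced representation.

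\textbf{Main obstacle.} The delicate point is the decay $N_iD_i^{-1}\to 0$. If one simply picked an arbitrary $\mathbf K$-invariant algebraic complement to $E_{j+1}$ in $E_j$ for $\hat U_j^E$, the off-diagonal blocks of $A_iC^{-1}$ could be of the \emph{same} order $e^{id_j^E}$ as the diagonal, and the correction $C_i$ would fail to converge to the identity. The essential point is that the $\hat U_j^E$ must be chosen compatibly with the dynamics, namely via the limit identifications in Lemma \ref{lim}, so that $A_iv$ for $v\in\hat U_j^E$ concentrates (after normalization) on $U_j^E$ and not across several eigenspaces. Once this quantitative compatibility between Step 1 and the dynamics is set up, the rest of the argument is a routine triangular-matrix manipulation.
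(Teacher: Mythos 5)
Your overall architecture is the same as the paper's: the paper's proof consists of observing that all the subspaces and isomorphisms in play can be chosen $\mathbf K$-equivariantly (complete reducibility) and then running the construction of \cite{CSW}, p.~3153 --- split the filtration along the eigenspaces of $\Lambda$, note that $A_iC^{-1}$ is asymptotically block diagonal, and correct by $C_i=(I+N_iD_i^{-1})^{-1}$. Your reduction to $m=1$, the equivariance bookkeeping, and the algebraic identity $C_iA_iC^{-1}=D_i$ are all fine.

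The genuine gap is the quantitative heart of Step 2 (and, with it, the ``limit map'' of Step 1): both fail as stated once some eigenspace $U_j$ has dimension at least $2$. Take $E=\mathbb C^3$, $\Lambda=\mathrm{diag}(1,1,0)$, and
\[
A_i=\begin{pmatrix} e^i&0&0\\ \sqrt i\,e^i&e^i&0\\ e^i&h_i&1\end{pmatrix},
\]
with $h_i$ chosen recursively so that \eqref{p} holds (this forces $h_i\sim e^i/\sqrt i$). Here $E_2=\langle e_3\rangle$, one may take $\hat U_1=\langle e_1,e_2\rangle$ and $C=\mathrm{Id}$. Then (i) $A_ie_1/\|A_ie_1\|$ and $A_ie_2/\|A_ie_2\|$ both converge to $e_2$, so the map $[v]\mapsto\lim A_iv/\|A_iv\|$ collapses $E_1/E_2$ onto a single ray and is not an isomorphism onto $U_1$, no matter the dimension count; and (ii) $\|A_ie_1\|\sim\sqrt i\,e^i$ is not $\Theta(e^{id_1})$, the off-diagonal block $N_i^{(21)}=(e^i,h_i)$ is not $o(e^{id_1})$, and $\|(D_i^{(11)})^{-1}\|\sim\sqrt i\,e^{-i}$ is not $O(e^{-id_1})$, so the product of your two claimed estimates diverges like $\sqrt i$. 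The conclusion $N_iD_i^{-1}\to0$ does still hold here, but only through the exact cancellation $N_i^{(21)}(D_i^{(11)})^{-1}=\bigl(e^{-i}(e^i-\sqrt i\,h_i),\,e^{-i}h_i\bigr)\to(0,0)$, which is forced by the recursive hypothesis \eqref{p} and is invisible to the ray-wise consequences of Lemma \ref{lim} that you invoke. The missing ingredient is exactly the step done in \cite{CSW}: a difference-equation argument on $e^{-i\Lambda}A_iC^{-1}$ using that \emph{consecutive quotients} converge to $e^\Lambda$, which yields the block decay uniformly across each $U_j$. Relatedly, your ``Main obstacle'' points at the wrong culprit: any $\mathbf K$-invariant complement of $E_{j+1}$ in $E_j$ already gives concentration on $U_j$ by Lemma \ref{lim}; what can fail without the recursive input is the conditioning of $D_i^{(jj)}$ within a single $U_j$.
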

  
\begin{proof}
For two $\mathbf K-$invariant subspaces with same dimension,  we can always choose the isomorphism between them to be $\mathbf K-$invariant. Then the lemma can be proved as in \cite[p3153]{CSW}.
\end{proof}

  \begin{prop}\label{alg}
  Assume that $V=S^m(E)$ for some $m\geq 1$. Define $[\bar v]=\lim_{i\rightarrow \infty} e^{i\Lambda} C\cdot[ v]$. Then ${\rm Lim}_{A_i}[v]\subset \overline{ \bar G_{\mathbf K,\Lambda} \cdot [\bar v]}$.
  \end{prop}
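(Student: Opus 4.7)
The plan is to use Lemma \ref{fil} to replace $A_i$ by a conjugate sequence $\tilde A_i$ that preserves each eigenspace of $\Lambda$, and then to show that the projective limits of $[\tilde A_i Cv]$ are controlled by the dominant eigencomponent of $Cv$, which is precisely $\bar v$.

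First I would invoke Lemma \ref{fil} to obtain $C \in \bar G_{\mathbf K}$ and a sequence $C_i \to Id$ in $\bar G_{\mathbf K}$ such that $\tilde A_i := C_i A_i C^{-1}$ lies in $\bar G_{\mathbf K,\Lambda}$. The identity $A_i v = C_i^{-1} \tilde A_i Cv$ together with $C_i \to Id$ shows that the subsequential projective limits of $[A_i v]$ agree with those of $[\tilde A_i Cv]$ in $\mathbb P(V)$. Writing $Cv = \sum_j w_j$ with $w_j \in U_j$, and using the strict ordering $d_1 > d_2 > \cdots > d_k$, a direct computation gives
$$[\bar v] = \lim_i e^{i\Lambda}[Cv] = [w_{j_0}], \qquad j_0 := \min\{j : w_j \neq 0\},$$
so $\bar v$ is exactly the top eigencomponent of $Cv$.

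Second, since $\tilde A_i$ commutes with $e^{s\Lambda}$ it preserves each $U_j$, and therefore $\tilde A_i Cv = \sum_j \tilde A_i w_j$ with $\tilde A_i w_j \in U_j$. I would then establish the growth estimate $\tfrac{1}{i}\log|\tilde A_i w_j| \to d_j$ for each $j \geq j_0$. This follows from Lemma \ref{lim} applied to the vector $C^{-1} w_j$, which lies in $V_j \setminus V_{j+1}$ because $C \cdot V_j = \bigoplus_{i\geq j} U_i$; the bounded factor $C_i$ may be absorbed since $C_i \to Id$. Consequently, for every $j > j_0$ the ratio $|\tilde A_i w_j|/|\tilde A_i w_{j_0}|$ decays like $e^{i(d_j - d_{j_0})}$, so in $\mathbb P(V)$ the subsequential limits of $[\tilde A_i Cv]$ coincide with those of $[\tilde A_i w_{j_0}]$.

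Third, because $\tilde A_i \in \bar G_{\mathbf K,\Lambda}$ and $[\bar v] = [w_{j_0}]$, each $[\tilde A_i w_{j_0}]$ lies in $\bar G_{\mathbf K,\Lambda}\cdot[\bar v]$, so every subsequential limit belongs to $\overline{\bar G_{\mathbf K,\Lambda}\cdot[\bar v]}$, yielding the desired inclusion $\mathrm{Lim}_{A_i}[v] \subset \overline{\bar G_{\mathbf K,\Lambda}\cdot[\bar v]}$. I expect the main delicacy to be the growth estimate on each eigenspace: Lemma \ref{lim} must be applied simultaneously to the distinct vectors $C^{-1} w_j$, and one has to verify that the ratios between different eigenspaces really separate at the exponential rates $d_j - d_{j_0}$ without being spoiled by the factor $C_i$. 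This is precisely where the simultaneous adaptation of the filtration $V_j$ to the eigenspace decomposition $\bigoplus U_j$ supplied by Lemma \ref{fil} is essential.
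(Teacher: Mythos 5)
Your proposal is correct and follows essentially the same route as the paper: both hinge on Lemma \ref{fil} to replace $A_i$ by $\tilde A_i = C_iA_iC^{-1}\in\bar G_{\mathbf K,\Lambda}$, identify $[\bar v]$ with the dominant eigencomponent $\pi_{j_0}(Cv)$, and conclude that the limits of $[A_iv]$ are limits of $\tilde A_i[\bar v]$. The only (harmless) difference is presentational: where you run an explicit growth-rate comparison $|\tilde A_i w_j|/|\tilde A_i w_{j_0}|\to 0$ for $j>j_0$, the paper instead uses Lemma \ref{lim}'s conclusion that any limit $w$ already lies in $U_{j_0}$ and commutes the projection $\pi_{j_0}$ with the limit and with $\tilde A_i$.
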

  \begin{proof}
  
 Assuming that $d(v)=d_j$, we have $v\in V_j$. So $C\cdot v\in \oplus_{i=j}^k U_i$, and the $U_j$ component $\pi_j(C\cdot v)$ of $C\cdot v$ is not zero. It follows that $[\bar v]=[\pi_j(C\cdot v)]$. For $[w]\in {\rm Lim}_{A_i}[v]$, by Lemma \ref{lim}, we know that $w\in U_j$. Assume that $[w]=\lim_{i\rightarrow \infty} [A_{\alpha_i}v]$, then we have
  \begin{align}
  \lim_{i\rightarrow \infty} \tilde A_{\alpha_i} [\bar v]&=
  \lim_{i\rightarrow \infty} \tilde A_{\alpha_i} [\pi_j(C\cdot v)]
  =\pi_j(\lim_{i\rightarrow \infty} \tilde A_{\alpha_i} [(C\cdot v)]) \notag \\
  &=\pi_j(\lim_{i\rightarrow \infty} A_{\alpha_i}[v])=\pi_j[w]=[w].  \notag
  \end{align}
  The proposition is proved since $\tilde A_i \in \bar G_{\mathbf K,\Lambda}$.
  \end{proof}

\subsection{Proof of Theorem \ref{th1}} 

By  the uniqueness of $X_\infty$ \cite{HL, WZ},  
 we may assume that  the initial metric $\omega_0$ in  (\ref{kr-flow})  is  a $\mathbf K_0$-invariant metric.   Then all  solutions of  $\omega_t$  are  also $\mathbf K_0$-invariant.  Choose the multilple $l$ large enough such that the partial $C^0$-estimate holds for $(M,\omega_t)$ by \cite{ZhK, WZ20}. Let $h_t$ be the $\mathbf K_0$-invariant Hermitian metric on $K_X^{-1}$ such that the Chern curvature of $h_t$ is $\omega_t$. It induces a $K-$invariant metric $H_t$ on 
$E=H^0(X,K_X^{-l})$ by
\begin{align}\label{inner-product}
 ( s^\alpha, s^\beta)_{H_t}=\int_M\langle   s^\alpha, s^\beta \rangle_{h_t^{\otimes l}}\omega_{t}^n.
 \end{align}
Let $\{s^\alpha(t)\}$   be an  ortho-normal  basis  of   $(E,H_t)$ and $\Phi_t: M\to \mathbb CP^N$ be the Kodaira embedding given by  $\{s^\alpha(t)\}$.   By  Proposition 2.7 in \cite{WZ20},  $ \Phi_{t_i}(X)$  converges to a $\mathbb Q$-Fano variety $X_\infty$ with klt singularities for a subsequence $t_i\rightarrow \infty$. Moreover by Theorem 1.1 in \cite{WZ20}, $X_\infty$ admits a weak K\"ahler-Ricci soliton. 

We can make these embeddings equivariant with respect to $\mathbf K_0$ by using suitable ortho-normal basis in the following. Choose an ortho-normal basis $s^\alpha(0)$ with respect to  $H_0$.  Then    with respect to $\{s^\alpha(0)\}$ we can write $H_t$ as a matrix. By abuse of notations, we also denote it by $H_t$.

Consider  the following ODE for  matrix $A_t$:
 \begin{align}\label{ODE-s}
\left\{ 
\begin{aligned}
  \frac{d}{dt}A_t&=-\frac{1}{2}A_tH_t'H_t^{-1},\\
 A_0&=id. 
\end{aligned}\right.
  \end{align} 
   It follows that $\frac{d}{dt}(A_tH_tA_t^*)=0$ and $A_tH_tA_t^*=id$.
Define $s^\alpha(t)=\sum_{\beta=0}^N A^{\alpha\beta}_ts^\beta(0)$. Then  one can check that $\{s^\alpha(t)\}$ is an ortho-normal basis of $E$ with metric $H_t$. Denote by $\Phi_t$ the embeddings  of $X$ given by
   $\{s^\alpha(t)\}$ as above. Thus  we have $\Phi_t=A_t\cdot \Phi_0$. 
   
   By (\ref{inner-product}),  the inner  $(\cdot,\cdot)_{H_0}$ on the basis $\{s^\alpha(0)\}$ induces $\mathbf K_0$ as a  subgroup of  U(N+1). Then $\mathbf G_0$ can be regarded as a subgroup of GL(N+1, $\mathbb C$).

\begin{lem}\label{commication}
$A_t$ commutes with $\mathbf K_0$ and so it commutes with $\mathbf G_0$ .
\end{lem}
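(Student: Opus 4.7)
The plan is a direct ODE uniqueness argument, keyed to the fact that $\mathbf K_0$-invariance of $\omega_t$ propagates through the construction. First I would observe that because $\omega_0$ is $\mathbf K_0$-invariant and the KR flow (\ref{kr-flow}) preserves this symmetry, the Hermitian metric $h_t$ on $K_X^{-1}$ is $\mathbf K_0$-equivariant, and hence the induced inner product $H_t$ on $E = H^0(X,K_X^{-l})$ defined by (\ref{inner-product}) is $\mathbf K_0$-invariant for all $t$. Now let $\rho\colon \mathbf K_0 \to \mathrm{U}(N+1)$ denote the representation of $\mathbf K_0$ on $E$ written in the basis $\{s^\alpha(0)\}$; since this basis is ortho-normal with respect to $H_0$ and $H_0$ is $\mathbf K_0$-invariant, each $\rho(k)$ is a unitary matrix.

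The next step is to translate $\mathbf K_0$-invariance of $H_t$ into a commutation relation. Viewing $H_t$ as a Hermitian matrix in the basis $\{s^\alpha(0)\}$, invariance gives $\rho(k)^* H_t \rho(k) = H_t$, and unitarity of $\rho(k)$ then yields
\[
H_t\,\rho(k) = \rho(k)\,H_t \qquad \text{for all } k\in\mathbf K_0.
\]
Differentiating in $t$ and inverting, both $H_t'$ and $H_t^{-1}$ likewise commute with $\rho(k)$, so $H_t' H_t^{-1}$ commutes with $\rho(k)$.

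Now I would set $B_t := \rho(k)\,A_t\,\rho(k)^{-1}$ and check that $B_t$ satisfies the same ODE (\ref{ODE-s}) as $A_t$. Indeed,
\[
\frac{d}{dt}B_t = \rho(k)\Bigl(-\tfrac{1}{2}A_t H_t' H_t^{-1}\Bigr)\rho(k)^{-1} = -\tfrac{1}{2}\,B_t\,\bigl(\rho(k) H_t' H_t^{-1} \rho(k)^{-1}\bigr) = -\tfrac{1}{2}\,B_t H_t' H_t^{-1},
\]
where the last equality uses the commutation just established. Since $B_0 = \rho(k)\rho(k)^{-1} = \mathrm{id} = A_0$, uniqueness of solutions to the linear ODE gives $B_t = A_t$, i.e.\ $\rho(k) A_t = A_t \rho(k)$ for every $k\in \mathbf K_0$.

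Finally, since $\mathbf G_0$ is a complex reductive group with $\mathbf K_0$ as a maximal compact subgroup, $\mathbf G_0$ is the complexification of $\mathbf K_0$; the centralizer in $\mathrm{GL}(N+1,\mathbb C)$ of the real subgroup $\mathbf K_0$ coincides with the centralizer of the complex subgroup $\mathbf G_0$ by analytic continuation of the relation $\rho(g) A_t = A_t \rho(g)$ from $\mathbf K_0$ to $\mathbf G_0$. Hence $A_t$ commutes with $\mathbf G_0$ as claimed. There is no real obstacle here; the only subtlety is the bookkeeping to confirm that unitarity of $\rho(k)$ in the chosen basis is what converts invariance of $H_t$ into genuine matrix commutation, and this is the step I would make sure to state carefully.
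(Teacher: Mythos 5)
Your proposal is correct and follows essentially the same route as the paper: both arguments reduce $\mathbf K_0$-invariance of $H_t$ to the matrix commutation $H_t'H_t^{-1}\rho(k)=\rho(k)H_t'H_t^{-1}$ and then invoke uniqueness for the linear ODE (\ref{ODE-s}) (the paper phrases this as $\frac{d}{dt}[A_t,g]=-\tfrac12[A_t,g]H_t'H_t^{-1}$ with $[A_0,g]=0$, which is equivalent to your $B_t=A_t$ formulation), before passing from $\mathbf K_0$ to $\mathbf G_0$ by reductivity/complexification.
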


\begin{proof}
Since $H_t(g\cdot,g\cdot)=H_t$ for $g\in \mathbf K_0$, we have $g H_t g^*=H_t$ which  is same as $H_tg=gH_t$ since $g$ is unitary. So $H_t'$ also commutes with $\mathbf K_0$. Now by (\ref{ODE-s}) we have 
$$\frac{d}{dt}[A_t, g]=-\frac{1}{2}[A_t,g]H_t'H^{-1}_t,$$ 
where $[A,B]=AB-BA$. Hence $[A_t, g]=0$.  By the reductivity of  $\mathbf G_0$, we also get
$[A_t, g]=0$ for any $g\in \mathbf G_0$.  The lemma is proved.
\end{proof}

\begin{proof}[Completion of proof of Theorem \ref{th1}] As we know that  the ortho-normal basis $\{s^\alpha(0)\}$ induces the  action of $\mathbf G_0$  as a subgroup of  GL(N+1, $\mathbb C$), i.e., $\Phi_0$ commutes with $\mathbf G_0$.  By  the above lemma,  $\Phi_t$ also commutes with $\mathbf G_0$.  On the other hand,  by Lemma 6.5 in \cite{WZ},  $A_{t_i'+s}\cdot A_{t_i'}^{-1}$ converges to $e^{sv}$ uniformly for any subsequence $t_i'\rightarrow \infty$, where $v$ is the soliton vector VF  on the limit of K\"ahler-Ricci flow. Since $v$ is the limit of $A_{t_i'+s}\cdot A_{t_i'}^{-1}$, it commutes with $\mathbf K_0$. By Corollary 3.3 in \cite{WZ}, $v$ is unique up to conjugation. We may assume $v\in gl(N+1, \mathbb C)$ is diagonal. By \cite[Proposition 7.4]{WZ}, ${\rm Im}(v)$ lies in the compact part of $Aut^v(X_\infty)$, where $X_\infty$ is the limit of K\"ahler Ricci flow. It follows that the eigenvalues of $v$ are all real numbers. So we can modify $A_i$ by unitary elements commuting with $\mathbf K_0$ such that the condition (\ref{p}) holds with $\Lambda=v$. Denote the Hilbert polynomial of $\Phi_0(X)$ by $p$ and the Hilbert scheme of subschemes of $\mathbb P(E)$ with Hilbert polynomial $p$ by $\mathbf {Hilb}^{\mathbb P(E),p}$. Then we can apply Proposition \ref{alg} to  the Hilbert point $[X]=[\Phi_0(X)]\in \mathbf {Hilb}^{\mathbb P(E),p}$ as in \cite{WZ}. Denote $[\bar X_\infty]=\lim_{t\rightarrow \infty} e^{tv} C\cdot [X]$. Then by Propsition \ref{alg} $[X_\infty]$ lies in the closure of $G_{\mathbf {K_0},v}\cdot [\bar X_\infty]$.

Note that  both of  $C$ and $e^{tv}$ commute with $\mathbf K_0$.
Thus 
$$\lim_{t\rightarrow \infty} e^{tv} C\cdot X$$
is the center  of $\mathbf G_0$-equivariant $1$-PS $\sigma(t)=e^{tv}$  degeneration of $(X, K_X^{-1})$.   Clearly,  the Hilbert point of  this limit is just $[\bar X_\infty]$.  Hence,  we  may denote the limit by $ \bar X_\infty$. On the other  hand, 
  by \cite[Proposition 7.4]{WZ},  ${\rm Aut}^v(X_\infty)$ is reductive, so $G_{\mathbf K_0,v}\bigcap{\rm Aut}^v(X_\infty)$ is also a reductive group.  So by Luna's slice lemma, there is a $1$-PS $\tau(s)$ $(s\in \mathbf C^*)$   in  $G_{\mathbf K_0,v}\bigcap{\rm Aut}^v(X_\infty)$ degenerating $\bar X_\infty$ to $X_\infty$. Since all the elements in $G_{\mathbf K_0,v}$ commute with $\mathbf G_0\times \mathbb C^*$,  by Lemma \ref{1-ps-correponding}, $\tau(s)$  induces a  $\mathbf G_0\times \mathbb C^*$-equivariant  $\mathbb C^*$-degeneration of  $(\bar X_\infty, K_{X_\infty}^{-l})$ with the center $X_\infty$.  The theorem is proved.
\end{proof}

\section{Torus $1$-PS  degenerations on $\mathbf G$-spherical varieties}

In this section, we will recall Delcroix's work \cite{Del-2020}  on  torus  $\mathbb C^*$-degenerations of spherical varieties and then  we generalize his results for torus $1$-PS   degenerations.

\subsection{Combinatorial data associated to  $\mathbf G$-spherical varieties }

 We  list the general facts about spherical varieties and give some basic results of combinatorial data.   We  refer the reader to   the main references   \cite{K, Tim}  in this subsection. 
Let $\mathbf G$ be a complex connected reductive group and $\mathbf B \subseteq \mathbf G$ be a Borel subgroup.

\begin{defi}\label{sph}
The homogeneous variety 
$\mathbf G / \mathbf H$ is said spherical if there is an open $\mathbf B$-orbit   $\mathbf B \mathbf H$
 and the corresponding orbit map  $\pi: \mathbf G \rightarrow \mathbf G / \mathbf H$ is separable. The $G$-action on $G/H$ is on the left: $\sigma\cdot g\mathbf H=(\sigma g)\mathbf H$.  We denote by $\mathbf P\subseteq \mathbf G$ the stabilizer of the open $\mathbf B$-orbit $\mathbf B \mathbf H$.

An embedding of  $\mathbf G / \mathbf H$ is a normal $\mathbf G$-variety $X$ together with a $\mathbf G$-equivariant open embedding  $\mathbf G / \mathbf H \hookrightarrow X$. Let $x_0$ be the image of $\mathbf H\in \mathbf G / \mathbf H$. We say $(X, x_0)$ or $X$ is a spherical variety.

\end{defi}

The group of characters of any group $\mathbf S$ is denoted by $\mathfrak X(\mathbf S)$:
$$\mathfrak X(\mathbf S)=\{f: S\rightarrow \mathbb C^*\text{ is a homomorphism}\}.$$
  Note that for a Borel subgroup $\mathbf B$ and the maximal subtorus $\mathbf T$ contained in $\mathbf B$, $\mathfrak X(\mathbf T)=\mathfrak X(\mathbf B)$.  Let $\Phi \subset \mathfrak{X}(\mathbf T)$ be the root system of $(\mathbf G, \mathbf T)$ and denote by $\Phi^{+}$ the positive roots in $\Phi$ determined by $\mathbf B$.

The parabolic subgroup $\mathbf P$ of $\mathbf G$ containing $\mathbf B$ admits a unique Levi subgroup $\mathbf L$ containing $\mathbf T$. Let $\Phi_{\mathbf P}$ denote the set of roots of $\mathbf P$ with respect to $\mathbf T$. We denote by $\Phi_{\mathbf P^{u}}$ the set of roots of the unipotent radical $\mathbf  P^{u}$ of $\mathbf P$. Define
$$
\kappa_{\mathbf P}=\sum_{\alpha \in \Phi_{\mathbf P^{u}}} \alpha .
$$

For a normal variety $X$, denote by $\mathbb C(X)$ the rational functions on $X$. A valuation of a normal variety $X$ is a map
$$
v: \mathbb C(X)^{*}=\mathbb C(X) \backslash\{0\} \longrightarrow \mathbb{Q}
$$
with the properties
$$
\begin{aligned}
&v\left(f_{1}+f_{2}\right) \geq \min \left\{v\left(f_{1}\right), v\left(f_{2}\right)\right\} \text { whenever } f_{1}, f_{2}, f_{1}+f_{2} \in \mathbb C(X)^{*}, \\
&v\left(f_{1} f_{2}\right)=v\left(f_{1}\right)+v\left(f_{2}\right) \text { for all } f_{1}, f_{2} \in \mathbb C(X)^{*} \text { and } \\
&v\left(\mathbb C^{*}\right)=0 .
\end{aligned}
$$
Define
\begin{align}
(\mathfrak M:=)\mathfrak M(\mathbf G/\mathbf H)=&\{\chi \in \mathfrak X(\mathbf B)| \mathbb C( \mathbf G/\mathbf H)^{(\mathbf B)}_\chi\not=\{0\}\}, \notag \\
(\mathfrak N:=)\mathfrak N( \mathbf G/\mathbf H)=&{\rm Hom}(\mathfrak M( \mathbf G/\mathbf H)),\mathbb Z). \notag
\end{align}

Then $\mathfrak M$ is a subgroup of $\mathfrak X(\mathbf B)=\mathfrak X(\mathbf T)$ and its rank  is called the rank of homogeneous space $ \mathbf G/\mathbf H)$.

For any valuations $v$ on $\mathbb C( \mathbf G/\mathbf H)$, define $\rho(v)=\rho_v\in \mathfrak N\otimes \mathbb Q$ by $\rho_v(\chi)=v(f_\chi)$. Define $(\mathcal V:=)\mathcal V( \mathbf G/\mathbf H)=\{\mathbf G\text{ invariant valuations}\}.$ Then it is known that $\rho: \mathcal V\rightarrow  \mathfrak N_\mathbb Q$ is injective. So we can identify $\mathcal V( \mathbf G/\mathbf H)$ with its image and call it the valuation cone. The valuation cone $\mathcal V$ spans $ \mathfrak N_\mathbb Q$. Moreover  $\mathcal{V}\subseteq \mathfrak{N}_{\mathbb{Q}}$ is a cosimplicial cone (cf. \cite[Section 20]{Tim}). Hence there is a uniquely determined linearly independent set $\Sigma \subseteq \mathfrak{M}$ of primitive elements such that
$$
\mathcal{V}=\bigcap_{\gamma \in \Sigma}\left\{v \in \mathfrak N\otimes \mathbb Q:\langle v, \gamma\rangle \leq 0\right\}
$$
The elements of $\Sigma$ are called the spherical roots which is a subset of cone spaned by positive roots of $\mathbf G$.

Denote the set of $B$-stable prime divisors in $ \mathbf G/\mathbf H$ by $\mathcal D:=\mathcal D( \mathbf G/\mathbf H)$, which are called colors of $ \mathbf G/\mathbf H$. We also denote the set of $B$-stable prime divisors in $X$ by $\mathcal P(X)$. Let $X$ be an embedding with $\mathbf G$-closed orbit $Y$. Define
$\mathcal D_Y(X)=\{D\in \mathcal P(X), Y\subseteq D\}$.

The embedding $X$ is called simple if there is only one closed $G$ orbit $Y$. In this case, we define two sets
\begin{align*}
\mathcal I_Y(X)&=\{v_D\in \mathcal V| D\in \mathcal D_Y(X)\text{ is $G$-stable}\},\\
\mathcal J_Y(X)&=\{\emptyset \neq D\bigcap  \mathbf G/\mathbf H\in \mathcal D| D\in \mathcal D_Y(X)\}.
\end{align*}
Then $X$ is uniquely determined by $\mathcal I_Y(X), \mathcal J_Y(X)$. Now we want to describe this pair only using information of $ \mathbf G/\mathbf H$. For any color $D$, $v_D$ is a valuation, then we denote $\rho(v_D)$ simply by $\rho(D)$. Let $\mathcal C_Y(X)\subseteq  \mathfrak N_\mathbb Q$ be the cone generated by $\rho(\mathcal J_Y(X))$ and $\mathcal I_Y(X)$.
\begin{lem}
For a simple embedding $X$, the $\mathbb Q_+$ span of $\mathcal I_Y(X)$ are exactly the extremal rays of $\mathcal C_Y(X)$ which contain no elements of $\rho(\mathcal J_Y(X))$.
\end{lem}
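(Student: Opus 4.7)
I will use the Luna–Vust framework for simple embeddings. Since $\mathcal{C}_Y(X)$ is, by definition, the convex cone generated by the finite set $\mathcal{I}_Y(X) \cup \rho(\mathcal{J}_Y(X))$, every extremal ray is generated by some element of this set. The lemma will follow once I establish: (i) each $v \in \mathcal{I}_Y(X)$ spans an extremal ray of $\mathcal{C}_Y(X)$; (ii) no extremal ray containing an element of $\mathcal{I}_Y(X)$ also contains an element of $\rho(\mathcal{J}_Y(X))$; and (iii) any extremal ray of $\mathcal{C}_Y(X)$ not of the form in (i) must be generated by some $\rho(D)$ with $D \in \mathcal{J}_Y(X)$, hence contains such an element. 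Part (iii) is immediate from the previous two, so the work is in (i) and (ii).

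\textbf{Step 1 (extremality of $\mathcal{I}_Y(X)$).} The elements of $\mathcal{I}_Y(X)$ are the $G$-invariant divisorial valuations attached to distinct prime $G$-stable divisors of $X$ containing $Y$, viewed via $\rho$ inside $\mathfrak N_{\mathbb Q}$. Because $\rho$ is injective on $\mathcal V$ and the $v_D$'s come from distinct prime divisors, they are pairwise non-proportional and each is a primitive vector of $\mathcal V$. I would then argue that they are linearly independent: if $\sum a_D \rho(v_D)=0$ with some $a_D \neq 0$, separating positive and negative coefficients produces two distinct $G$-invariant valuations with the same image under $\rho$, contradicting injectivity of $\rho$ on $\mathcal V$ after noting that both sides lie in $\mathcal V$ (which is closed under positive sums). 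Given linear independence and the fact that each $v \in \mathcal{I}_Y(X)$ lies on the boundary of $\mathcal V$ (being primitive and $G$-invariant), any relation $v = \sum_i \lambda_i u_i$ with $\lambda_i>0$ and $u_i \in \mathcal I_Y(X) \cup \rho(\mathcal J_Y(X))$ would force, after projecting onto the complement of the hyperplane cutting out the ray $\mathbb{Q}_+ v$ inside $\mathcal V$, a non-trivial cancellation among elements of $\rho(\mathcal J_Y(X))$ plus the remaining $u_j$'s. Strict convexity of $\mathcal C_Y(X)$, which is a Luna–Vust axiom for the colored cone of a simple embedding, prevents this cancellation. Hence $v$ spans an extremal ray.

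\textbf{Step 2 (colors avoid these rays).} Suppose for contradiction that $\rho(D) \in \mathbb{Q}_+ \cdot v_E$ for some color $D \in \mathcal{J}_Y(X)$ and some $E$ a $G$-stable prime divisor with $v_E \in \mathcal{I}_Y(X)$. Then the $B$-invariant valuation $v_D$ and the $G$-invariant valuation $v_E$ would agree, up to a positive scalar, on the lattice $\mathfrak M$ of $B$-semi-invariant rational functions. But $v_D$ is attached to a color $D \subset G/H$, so it is not $G$-invariant; translating by a generic $g \in G$ produces $g \cdot v_D \neq v_D$, yet both project to the same element of $\mathfrak N_{\mathbb Q}$. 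Since $v_E$ is $G$-invariant, using the openness of $BH$ in $G/H$ and the fact that $B$-semi-invariants generate $\mathbb C(G/H)^{(B)}$ up to $\mathbb C^*$, one shows the two valuations cannot be proportional: the ratio $v_D/v_E$ on all of $\mathbb C(X)^*$ would be forced to be the same constant, making $v_D$ itself $G$-invariant, contradicting non-$G$-stability of $D$. This rules out the coincidence.

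\textbf{Main obstacle.} The delicate point is Step 2, namely ruling out that the image of a color could coincide with a proportional multiple of a $G$-invariant valuation under $\rho$. The projection $\rho$ is far from injective on non-$G$-invariant valuations, so one must use specifically that $v_E$ lies on an extremal ray of $\mathcal V$ (hence is primitive in a strong sense) and that the lattice of $B$-semi-invariants separates $G$-invariant valuations from genuinely $B$-invariant ones. Once this is established, combining Steps 1 and 2 with the observation that the generators of $\mathcal{C}_Y(X)$ are exhausted by $\mathcal{I}_Y(X) \cup \rho(\mathcal{J}_Y(X))$ yields the converse direction and completes the proof.
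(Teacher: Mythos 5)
Your decomposition into (i)--(iii) is the right skeleton, and the easy inclusion (an extremal ray of the finitely generated cone $\mathcal C_Y(X)$ must contain one of the generators, so an uncolored extremal ray contains some $v\in\mathcal I_Y(X)$) is fine. But both of your substantive steps fail. In Step 1, the claimed linear independence of $\mathcal I_Y(X)$ is false in general: already for a simple (affine) toric variety whose cone is the cone over a square in $\mathbb Q^3$, $\mathcal J_Y(X)=\emptyset$ and $\mathcal I_Y(X)$ has four elements in a three--dimensional space. The argument you give for it is also vacuous: a relation $\sum a_D v_D=0$ rewritten as an equality of two positive combinations produces one element of $\mathfrak N_{\mathbb Q}$ written in two ways, not ``two distinct $G$-invariant valuations with the same image under $\rho$,'' so no contradiction with injectivity of $\rho|_{\mathcal V}$ arises. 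Likewise, elements of $\mathcal I_Y(X)$ need not lie on the boundary of $\mathcal V$ (for horospherical or toric $G/H$ one has $\mathcal V=\mathfrak N_{\mathbb Q}$), and strict convexity of $\mathcal C_Y(X)$ does not by itself prevent a generator from being a positive combination of the others (four rays in a strictly convex planar cone: the two middle ones are not extremal). So extremality of the rays through $\mathcal I_Y(X)$ is simply not established.

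Step 2 is the genuinely delicate point, as you say, but your argument for it is circular. From $\rho(v_D)=c\,\rho(v_E)$ you only know the two valuations agree (up to scale) on $B$-semi-invariant functions; since $\rho$ is far from injective off $\mathcal V$, nothing forces ``the ratio $v_D/v_E$ on all of $\mathbb C(X)^*$ to be constant,'' and in general the image of a color \emph{can} coincide with the image of a $G$-invariant valuation in $\mathfrak N_{\mathbb Q}$. What rules out the coincidence for the divisors actually occurring in a given simple embedding is not a valuation-theoretic identity but the geometry of the Luna--Vust construction: one shows (as in Knop's Lemma 2.4) that if $v_{D_0}$, for a $G$-stable prime divisor $D_0\supseteq Y$, lay in the cone generated by $\rho(\mathcal J_Y(X))$ and the remaining elements of $\mathcal I_Y(X)$, then the $B$-chart obtained by deleting $D_0$ would define the same colored cone, contradicting that $D_0$ contains $Y$ (equivalently, the bijection between simple embeddings and colored cones, via the orbit--face correspondence). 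Your proposal never invokes this structure, so the converse inclusion remains unproved. Note the paper itself states this lemma without proof as standard background from Knop and Timashev; if you want a complete argument you should reproduce the $B$-chart/orbit--face argument rather than attempt a purely convex-geometric or valuation-theoretic one.
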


\begin{defi}
A colored cone is a pair $(\mathcal{C}, \mathcal{R})$ with $\mathcal{C} \subseteq \mathfrak N_\mathbb Q$ and $\mathcal{R} \subseteq \mathcal{D}$ having the following properties:
\begin{itemize}
\item CC1: $\mathcal{C}$ is a cone generated by $\rho(\mathcal{R})$ and finitely many elements of $\mathcal V$.
\item CC2: The interior $\mathcal{C}^{\circ} \cap \mathcal V\neq \emptyset$
\end{itemize}

A colored cone is called strictly convex if the following holds:\\
SCC: $\mathcal{C}$ is strictly convex and $0 \notin \rho(\mathcal{R})$.
\end{defi}
Denote the pair of $(\mathcal C_Y(X),\mathcal J_Y(X))$ by $\mathcal{C}^{c}(X)$, then we have

\begin{thm}
The map $X \mapsto \mathcal{C}^{c}(X)$ is a bijection between isomorphism classes of simple embeddings and strictly convex colored cones.
\end{thm}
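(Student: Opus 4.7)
The plan is to establish the Luna--Vust bijection in both directions via the Local Structure Theorem for spherical varieties. Given a simple embedding $X$ of $\mathbf{G}/\mathbf{H}$ with unique closed orbit $Y$, the key input is that there is a unique $\mathbf{B}$-stable affine open $X_0 \subseteq X$ meeting $Y$: existence follows from Sumihiro's theorem on $\mathbf{G}$-linearization, and uniqueness from simplicity. The pair $\mathcal{C}^c(X) = (\mathcal{C}_Y(X), \mathcal{J}_Y(X))$ is then read off from $X_0$: the semigroup of $\mathbf{B}$-weights of $\mathbb{C}[X_0]$ is $\mathcal{C}_Y(X)^\vee \cap \mathfrak{M}$, and the $\mathbf{B}$-stable prime divisors of $X_0$ split into $\mathbf{G}$-stable ones, contributing the finite set $\mathcal{I}_Y(X) \subseteq \mathcal{V}$, and colors, contributing $\mathcal{J}_Y(X) \subseteq \mathcal{D}$. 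Axioms CC1 and CC2 follow from finite generation of $\mathbb{C}[X_0]$ and the Sumihiro decomposition, while SCC expresses precisely that $X_0$ contains $Y$ as a closed $\mathbf{G}$-orbit.

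For the converse, given a strictly convex colored cone $(\mathcal{C}, \mathcal{R})$, the plan is to construct the candidate chart
\[
X_0 := \mathrm{Spec}\, A, \qquad A := \bigoplus_{\chi \in \mathcal{C}^\vee \cap \mathfrak{M}} \mathbb{C}\, f_\chi,
\]
where $f_\chi$ is a $\mathbf{B}$-semi-invariant of weight $\chi$ in $\mathbb{C}(\mathbf{G}/\mathbf{H})$ and where the direct sum runs over those $\chi$ that are non-negative on every invariant valuation generating $\mathcal{C}$ and on every color in $\mathcal{R}$. Gordan's lemma gives finite generation of $A$, and the strict convexity condition SCC guarantees that $A$ has $\mathbf{B}$-stable prime divisors of the two promised types. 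One then forms $X := \mathbf{G} \cdot X_0$ by gluing $\mathbf{G}$-translates of $X_0$ inside a suitable projective completion of $\mathbf{G}/\mathbf{H}$, and verifies normality via the Local Structure Theorem, which identifies a slice of $X_0$ transverse to the $\mathbf{P}^{u}$-action as an affine toric variety with cone $\mathcal{C}$.

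That the two assignments are mutually inverse reduces to two checks. Going $X \mapsto \mathcal{C}^c(X) \mapsto X$: the canonical chart $X_0$ recovers $A$ as its ring of regular functions, so the reconstruction from $(\mathcal{C}_Y(X), \mathcal{J}_Y(X))$ yields an isomorphic chart, and $\mathbf{G}$-translation reconstructs all of $X$ by normality and simplicity. Going $(\mathcal{C}, \mathcal{R}) \mapsto X \mapsto \mathcal{C}^c(X)$: by construction the semigroup of $\mathbf{B}$-weights of $\mathbb{C}[X_0]$ is $\mathcal{C}^\vee \cap \mathfrak{M}$, whose dual is $\mathcal{C}$, and the colors of $X_0$ extending across the closed orbit are exactly those in $\mathcal{R}$.

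The hard part will be the construction of $X_0$ from purely combinatorial data. One must prove that $A$ is a finitely generated normal $\mathbb{C}$-algebra whose spectrum really contains $Y$ as a single closed $\mathbf{G}$-orbit, and that the colors in $\mathcal{R}$ (but no others) contain $Y$ as a subset. This depends on the injectivity of $\rho$ on $\mathcal{V}$, on axiom CC1 which glues color data to invariant valuation data in a compatible way, and on a careful analysis of which $\mathbf{B}$-stable divisors of $\mathbf{G}/\mathbf{H}$ extend to $X_0$ as honest divisors versus which get contracted or remain codimension one in the colors themselves. Normality and separatedness of the glued variety $X = \mathbf{G} \cdot X_0$ is the final technical hurdle, and is where the strict convexity SCC plays a decisive role.
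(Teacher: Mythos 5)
The paper does not prove this statement: it is the classical Luna--Vust classification of simple spherical embeddings, quoted verbatim from the references \cite{K,Tim} as part of the background in Section 4.1. So there is no in-paper argument to compare against, and your proposal has to be judged as a sketch of the standard proof. Its overall architecture (canonical $\mathbf B$-stable affine chart via Sumihiro and the local structure theorem in one direction, reconstruction of the chart from the colored cone in the other) is indeed the right skeleton.

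However, there is a concrete error in your reverse construction. The algebra $A=\bigoplus_{\chi\in\mathcal C^{\vee}\cap\mathfrak M}\mathbb C\,f_\chi$ is only the algebra of $\mathbf B$-semi-invariants; its spectrum is an affine toric variety of dimension equal to the rank of $\mathbf G/\mathbf H$ (the ``slice'' $Z$ in the local structure decomposition $X_0\cong\mathbf P^{u}\times Z$), not the chart $X_0$ itself, which has dimension $\dim X$. Already for a flag variety $\mathbf G/\mathbf P$ (rank $0$) your $A$ is $\mathbb C$ while $X_0$ is the big cell. The correct chart is $X_0=\operatorname{Spec} A$ with
$$A=\bigl\{f\in\mathbb C\bigl[(\mathbf G/\mathbf H)\setminus\textstyle\bigcup_{D\in\mathcal D\setminus\mathcal R}D\bigr]\ :\ v(f)\ge 0\ \ \forall\,v\in\mathcal C\cap\mathcal V\bigr\},$$
which contains far more than the semi-invariants, and whose finite generation is the genuinely hard point of the Luna--Vust theory (it is not an application of Gordan's lemma). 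Relatedly, in the forward direction the semigroup of $\mathbf B$-weights of $\mathbb C[X_0]^{(\mathbf B)}$ being $\mathcal C_Y(X)^{\vee}\cap\mathfrak M$ does not by itself recover $\mathbb C[X_0]$, and the proposal to glue $\mathbf G$-translates ``inside a suitable projective completion of $\mathbf G/\mathbf H$'' presupposes the existence of an embedding with the prescribed colored cone, which is what is to be shown. These gaps must be closed by the actual Luna--Vust machinery (or by citing \cite{K}, Theorem 3.3, as the paper implicitly does).
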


For the general spherical embedding of $\mathbf G/\mathbf H$, every $\mathbf G$ orbit gives a colored cone and these colored cones constitute a colored fan.

\begin{defi}
A colored fan is nonempty finite set $\mathcal F$ of colored cones with the following properties:
\begin{itemize}
\item CF1: Every face of $\mathcal{C}^{c} \in \mathcal{F}$ belongs to $\mathcal{F}$.
\item CF2: For every $v \in \mathcal{V}$ there is at most one $(\mathcal{C}, \mathcal{R}) \in \mathcal{F}$ with $v \in \mathcal{C}^\circ$.
\end{itemize}
A colored fan $\mathcal{F}$ is called strictly convex if $(0, \varnothing) \in \mathcal{F}$, or equivalently, if all elements of $\mathcal{F}$ are strictly convex.
\end{defi}

\begin{thm}
The map $X \mapsto \mathcal{F}(X)$ induces a bijection between isomorphism classes of embeddings and strictly convex colored fans.
\end{thm}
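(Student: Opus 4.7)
The plan is to reduce the general classification to the already-stated simple case (the bijection between simple embeddings and strictly convex colored cones) by an orbit-by-orbit gluing argument. For an embedding $X$ of $\mathbf G/\mathbf H$, I would assign to each $\mathbf G$-orbit $Y \subseteq X$ the colored cone $\mathcal C^c(X_Y)$, where
\[
X_Y = \{x \in X : \overline{\mathbf G \cdot x} \supseteq Y\}
\]
is the unique maximal $\mathbf G$-stable open subvariety in which $Y$ is closed. The colored fan $\mathcal F(X)$ is the collection of these cones as $Y$ ranges over $\mathbf G$-orbits.

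First I would check that $\mathcal F(X)$ is indeed a strictly convex colored fan. Each $X_Y$ is a simple embedding (it has the single closed orbit $Y$), so by the simple case theorem $\mathcal C^c(X_Y)$ is a strictly convex colored cone. For property CF1, whenever $Y' \subseteq \overline{Y}$ one has $X_{Y'} \supseteq X_Y$, and the inclusion of simple open subvarieties translates combinatorially into $\mathcal C^c(X_{Y'})$ being a face of $\mathcal C^c(X_Y)$ (the faces of a colored cone correspond to the $\mathbf G$-orbits in the closure of the closed orbit). For CF2 the key input is that the generic $\mathbf G$-invariant valuation supported on $\overline{Y}$ lies in the relative interior of $\mathcal C_Y \cap \mathcal V$, which separates distinct orbits and prevents two colored cones from sharing a relative interior point inside $\mathcal V$.

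For injectivity, suppose $\mathcal F(X_1) = \mathcal F(X_2)$. For each colored cone $(\mathcal C,\mathcal R) \in \mathcal F$ the corresponding simple open subvarieties $(X_1)_Y$ and $(X_2)_{Y'}$ are $\mathbf G$-equivariantly isomorphic over $\mathbf G/\mathbf H$ by the simple-embedding classification, and since the restriction of such an isomorphism to $\mathbf G/\mathbf H$ is the identity, uniqueness of extension forces the local isomorphisms to agree on overlaps. They thus glue to a global $\mathbf G$-equivariant isomorphism $X_1 \cong X_2$.

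For surjectivity, given a strictly convex colored fan $\mathcal F$, I would construct the simple embedding $X_{(\mathcal C,\mathcal R)}$ for each $(\mathcal C,\mathcal R) \in \mathcal F$ and glue them along the simple embeddings associated to common faces. The main obstacle is establishing \emph{separatedness} of the resulting scheme: one must show that for any two cones $(\mathcal C_1,\mathcal R_1),(\mathcal C_2,\mathcal R_2) \in \mathcal F$ the overlap in the glued space is precisely the simple embedding attached to the face $(\mathcal C_1 \cap \mathcal C_2,\, \mathcal R_1 \cap \mathcal R_2)$, which by CF1 is again in $\mathcal F$. This reduces, via a Luna-type local structure theorem on $\mathbf B$-charts, to the toric-style valuative criterion: any $\mathbf G$-invariant valuation of $\mathbb C(\mathbf G/\mathbf H)$ lying in $\mathcal C_1 \cap \mathcal C_2$ must lie in their shared face, which is exactly the fan axiom CF2. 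This separatedness check is the technical heart of the proof, since the combinatorial compatibility encoded in colored fans must be translated into scheme-theoretic compatibility of the $\mathbf B$-charts covering the glued embedding.
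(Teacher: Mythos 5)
The paper offers no proof of this statement: it is quoted as part of the Luna--Vust classification, and the reader is referred to \cite{K, Tim}. Your outline is precisely the route taken in those references --- decompose $X$ into the maximal simple open pieces $X_Y$, invoke the already-stated bijection for simple embeddings, and glue, with separatedness of the glued scheme as the technical heart of surjectivity --- so there is no divergence of method to report, and your identification of where the real work lies (the valuative criterion of separatedness via $\mathbf B$-charts and the local structure theorem) is accurate.

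There is, however, a concrete error in your verification of CF1: the face relation is reversed. If $Y' \subseteq \overline{Y}$ then indeed $X_Y \subseteq X_{Y'}$ (you have this inclusion right), but an inclusion of simple embeddings corresponds to the \emph{smaller} one's colored cone being a face of the \emph{larger} one's, i.e.\ $\mathcal C^c(X_Y)$ is a face of $\mathcal C^c(X_{Y'})$ --- not, as you write, $\mathcal C^c(X_{Y'})$ a face of $\mathcal C^c(X_Y)$. Your parenthetical justification is reversed in the same way: the faces of the colored cone of a simple embedding with closed orbit $Y'$ correspond to the $\mathbf G$-orbits of $X_{Y'}$, i.e.\ to the orbits whose closure \emph{contains} $Y'$ (the open orbit $\mathbf G/\mathbf H$ giving the trivial face $(0,\emptyset)$ and $Y'$ itself the whole cone), not to the orbits contained in $\overline{Y'}$. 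As written your argument does not yield CF1; what you actually need is exactly the corrected statement, namely that every face of $\mathcal C^c(X_{Y'})$ is of the form $\mathcal C^c(X_Y)$ for some orbit $Y$ of $X_{Y'}$, and hence already occurs in $\mathcal F(X)$. Beyond this, the proposal remains a sketch --- the construction of a simple embedding realizing a given strictly convex colored cone and the scheme-theoretic gluing are asserted rather than carried out --- but since the paper itself supplies no proof and defers to the literature, that level of detail is consistent with how the result is used here.
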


Denote the union of all sets $\mathcal R$ with $(\mathcal C,\mathcal R)\in \mathcal F(X)$ by $\mathcal D(X)$, which are called colors  of $X$.

Let $\mathcal{I}_{X}^{G}$ denote the finite set of $G$-stable prime divisors of $X$. Any divisor $Y \in \mathcal{I}_{X}^{G}$ corresponds to a ray $(\mathcal{C}, \emptyset) \in \mathcal{F}(X)$, and we denote by $u_{Y}$ the indivisible generator of this ray in $\mathfrak{N}$. For any $D\in \mathcal D$, its closure $\bar D$ in $X$ is an irreducible $B$-stable but not $G$-stable divisor. Any $B$-stable Weil divisor $d$ on $X$ writes
\begin{align}\label{B-divisor}
d=\sum_{Y \in \mathcal{I}_{X}^{G}} n_{Y} Y+\sum_{D \in \mathcal{D}} n_{D} \bar{D}
\end{align}
for some integers $n_{Y}, n_{D}$. In fact, any Weil divisor is linearly equivalent to a $B$ invariant divisor and Brion proved the following criterion to characterize Cartier divisors.

\begin{prop}\cite[Proposition 3.1]{Bri89} A $B$-stable Weil divisor $d$ in $X$ of form \eqref{B-divisor} is Cartier if and only if there exists an integral piecewise linear function $l_{d}$ on the support $|\mathcal{F}(X)|$ of the fan $\mathcal{F}(X)$ such that
\begin{align}\label{div}
d=\sum_{Y \in \mathcal{I}_{X}^{G}} l_{d}\left(u_{Y}\right) Y+\sum_{D \in \mathcal{D}(X)} l_{d}(\rho(D)) \bar{D}+\sum_{D \in \mathcal{D} \backslash \mathcal{D}(X)} n_{D} \bar{D}
\end{align}
for some integers $n_{D}$.
\end{prop}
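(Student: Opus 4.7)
The plan is to reduce the statement to a local check on simple spherical embeddings and then patch the local data using the combinatorial structure of the colored fan $\mathcal{F}(X)$. First, I would use that being Cartier is a local property on $X$, and that the $B$-stable open subsets $X_Y := X \setminus \bigcup_{Y' \neq Y} \overline{Y'}$ indexed by the closed $\mathbf{G}$-orbits $Y \subseteq X$ form an open cover; each $X_Y$ is itself a simple spherical embedding of $\mathbf{G}/\mathbf{H}$ whose colored cone is the maximal colored cone $(\mathcal{C}_Y,\mathcal{R}_Y)\in\mathcal{F}(X)$. Thus $d$ is Cartier on $X$ if and only if each restriction $d|_{X_Y}$ is Cartier, and under \eqref{B-divisor} the restriction simply forgets those prime divisors whose support does not meet $X_Y$, namely the $B$-stable divisors $\bar{D}$ with $D\in\mathcal{D}\setminus\mathcal{D}(X_Y)$ and the $G$-stable divisors other than those associated to rays of $\mathcal{C}_Y$.

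Next, I would analyze the Cartier condition on a simple embedding with closed orbit $Y$. After a further reduction to an affine simple neighborhood (whose colored cone is still $(\mathcal{C}_Y,\mathcal{R}_Y)$), the key classical fact is that a $B$-stable Cartier divisor through the closed orbit is principal, cut out by a $B$-semi-invariant rational function $f_{\chi}\in\mathbb{C}(X)^{(\mathbf{B})}$ of some weight $\chi_Y\in\mathfrak{M}$. Using the defining property of the valuation map $\rho$, the order of $f_{\chi_Y}$ along a $\mathbf{G}$-stable prime divisor associated to $u_{Y'}\in\mathcal{C}_Y$ equals $\langle \chi_Y, u_{Y'}\rangle$, and its order along a color $\bar{D}$ with $D\in\mathcal{D}(X_Y)$ equals $\langle \chi_Y,\rho(D)\rangle$. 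Hence on $X_Y$ the divisor $d$ has the form \eqref{div} with $l_Y:=\chi_Y$ playing the role of $l_d$ on $\mathcal{C}_Y$, while the coefficients $n_D$ of the remaining colors $D\in\mathcal{D}\setminus\mathcal{D}(X_Y)$ are unconstrained integers (they come from divisors that are not seen by the Cartier criterion on $X_Y$).

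The global integral piecewise linear function $l_d$ on the support $|\mathcal{F}(X)|$ is then assembled by declaring $l_d|_{\mathcal{C}_Y}=\chi_Y$ for each maximal colored cone, and extending linearly on each cone. Integrality is automatic since each $\chi_Y$ lies in $\mathfrak{M}\subseteq \mathfrak{X}(\mathbf{T})$. The essential compatibility check is that for two cones $\mathcal{C}_Y,\mathcal{C}_{Y'}$ meeting along a common face, which necessarily corresponds to an intermediate $\mathbf{G}$-orbit $Y''\subseteq \overline{Y}\cap\overline{Y'}$, the two characters $\chi_Y,\chi_{Y'}$ must agree on that face. This follows because on the smaller simple open subset $X_{Y''}$ both semi-invariants $f_{\chi_Y}$ and $f_{\chi_{Y'}}$ define the same Cartier divisor $d|_{X_{Y''}}$ up to divisors of colors not in $\mathcal{D}(X_{Y''})$, forcing $\chi_Y-\chi_{Y'}$ to vanish on $\mathcal{C}_{Y''}=\mathcal{C}_Y\cap\mathcal{C}_{Y'}$.

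The main obstacle, which is also the only genuinely nontrivial step, is the compatibility verification in the previous paragraph together with the consistency condition for colors shared by several simple charts: if $D\in\mathcal{D}(X_Y)\cap\mathcal{D}(X_{Y'})$, the two descriptions of $d$ both assign the same coefficient to $\bar{D}$, and this common value must equal $l_d(\rho(D))$; this is automatic from $\rho(D)\in \mathcal{C}_Y\cap\mathcal{C}_{Y'}$ once the characters have been matched on the common face. The converse direction, that any integral piecewise linear $l_d$ as in \eqref{div} yields a Cartier divisor, is straightforward: on each $X_Y$ one writes $d-\mathrm{div}(f_{l_d|_{\mathcal{C}_Y}})$ as a combination of divisors $\bar{D}$ with $D\notin\mathcal{D}(X_Y)$, which are Cartier on $X_Y$ because they do not meet the closed orbit there.
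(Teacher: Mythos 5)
The paper offers no proof of this proposition: it is quoted directly from Brion's 1989 paper, so there is no in-text argument to compare against. Your outline is, in substance, the standard proof from the literature (Brion, and Timash\"ev's book): localize to the simple charts indexed by the closed orbits, show that on each such chart a $B$-stable Cartier divisor through the closed orbit admits a $B$-semi-invariant local equation $f_{\chi_Y}$, read off its coefficients via the valuation map $\rho$, and glue the characters $\chi_Y$ into an integral piecewise linear function using the face structure of the colored fan; the converse uses that a $B$-stable divisor avoiding the closed orbit of a simple chart is automatically Cartier there. That is the right architecture, and your compatibility check on common faces and your converse are both essentially sound; note, though, that the converse implicitly uses that the non-Cartier locus of a $B$-stable Weil divisor is closed and $B$-stable, hence meets the closed orbit whenever it is nonempty, and you should say so explicitly. (Also, a face of $\mathcal{C}_Y\cap\mathcal{C}_{Y'}$ corresponds to an orbit $Y''$ with $\overline{Y''}\supseteq Y\cup Y'$, not to an orbit contained in $\overline{Y}\cap\overline{Y'}$, which is empty for distinct closed orbits.)

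Two points need genuine repair. First, your chart $X_Y=X\setminus\bigcup_{Y'\neq Y}\overline{Y'}$ is not the simple chart: deleting only the other closed orbits can leave behind orbits whose closure does not contain $Y$ (for $\mathbb{P}^2$ as a toric variety, removing two of the three fixed points leaves the open part of the opposite coordinate line as a second closed orbit). The correct chart is $X_Y=\{x\in X:\ Y\subseteq\overline{\mathbf{G}\cdot x}\}$, and the affine model on which one actually argues is not an ``affine simple neighborhood'' (simple embeddings need not be affine) but the $B$-stable affine chart obtained from $X_Y$ by deleting every $B$-stable prime divisor not containing $Y$. Second, the entire content of the proposition is concentrated in the ``key classical fact'' you invoke without proof, namely that on this affine chart the $B$-stable Cartier divisor is principal with a $B$-semi-invariant equation of weight in $\mathfrak{M}$. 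This is exactly the step that requires work: one takes a local generator of $\mathcal{O}(-d)$ at a point of the closed $B$-orbit, replaces it by a $B$-eigenvector in the $B$-module of sections over the affine chart, and uses that every nonempty closed $B$-stable subset of that chart meets the closed orbit to conclude that the eigenvector generates everywhere. As written, your proposal correctly reduces the proposition to its hardest ingredient and then asserts that ingredient.
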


Let $\mathcal{F}^{\max}(X)$ denote the set of cones $\mathcal{C} \subseteq$ $\mathfrak{N}_\mathbb{Q}$ of maximal dimension, such that there exists $\mathcal{R} \subseteq \mathcal{D}$ with $(\mathcal{C}, \mathcal{R}) \in \mathcal{F}(X)$. If $d$ is a Cartier divisor and $\mathcal{C} \in \mathcal{F}^{\max}(X)$, let $v_{\mathcal{C}}\in\mathfrak{M}$ such that $l_{d}(x)=v_{\mathcal{C}}(x)$ for $x \in \mathcal{C}$. Moreover, $d$ is ample if and only if the following conditions are satisfied:
\begin{itemize}
\item the function $l_{d}$ is convex,

\item $v_{\mathcal{C}_{1}} \neq v_{\mathcal{C}_{2}}$ if $\mathcal{C}_{1} \neq \mathcal{C}_{2} \in \mathcal{F}^{\max}(X)$,

\item $-n_{D}>v_{\mathcal{C}}(\rho(D))$ for all $D \in \mathcal{D} \backslash \mathcal{D}(X)$ and $\mathcal{C}\in \mathcal{F}^{\max}(X)$.
\end{itemize}

To a Cartier divisor $d$, we associate a polytope $\Delta_{d} \subseteq \mathfrak{M}_\mathbb{R}$ as
\begin{align*}
\Delta_d:=\left(\cap_{\mathcal C\in \mathcal{F}(X)^{\max}}(-v_{\mathcal{C}}+\sigma^{\vee})\right)\bigcap\left(\cap_{D \in \mathcal{D} \setminus \mathcal{D}_{X}}\{m|m(\rho(D))+n_{D} \geq 0\}\right).
\end{align*}
If $d$ is ample then $l_{d}$ is the support function of $\Delta_d$ for $x \in\left|\mathcal{F}(X)\right|$.

There is also a moment polytope associated to an ample line bundle. Let $L$ be a $G$-linearized ample line bundle on a spherical variety $X$. Denote by $V_{\lambda}$ an irreducible representation of $G$ of highest weight $\lambda \in \mathfrak{X}(T)$ with respect to $B$. Since $X$ is spherical, for all $r \in \mathbb{N}$, there exists a finite set $\Delta_{r} \subset \mathfrak{X}(T)$ such that
$$
H^{0}\left(X, L^{r}\right)=\bigoplus_{\lambda \in \Delta_{r}} V_{\lambda}
$$
\begin{defi} The moment polytope $\Delta_{L}$ of $L$ with respect to $B$ is defined as the closure of $\bigcup_{r \in \mathbb{N}^{*}} \Delta_{r} / r$ in $\mathfrak{X}(T) \otimes \mathbb{R}$.
\end{defi}

The relationship between the moment polytope and polytope of a $B$-stable divisor is as follows. Fix a linearization of $L$, and choose a global $B$-semi-invariant section $s$ of $L$, so that the zero divisor $d$ of $s$ is an ample $B$-invariant Cartier divisor. Let $\mu_{s}$ be the character of $B$ defined by $s$, that is, such that $b \cdot s\left(b^{-1} \cdot x\right)=\mu_{s}(b) s(x)$ for all $x \in X$.

\begin{prop}\label{pl}(\cite{Bri89}, Proposition 3.3])
The moment polytope $\Delta_{ L}$ of $ L$ and the polytope $\Delta_{d}$ associated to $d$ are related by $\Delta_{L}=$ $\mu_{s}+\Delta_{d}$.
\end{prop}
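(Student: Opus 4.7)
The plan is to translate the condition that a $B$-eigensection of $L^r$ be regular into the polytope inequalities that define $\Delta_d$, and then recognize the moment polytope as exactly this set shifted by $r\mu_s$.

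First I would use sphericality: since $X$ is spherical, each irreducible representation $V_\lambda$ appears with multiplicity at most one in $H^0(X,L^r)$, and it contains, up to scalar, a unique $B$-eigenvector $s_\lambda$ of weight $\lambda$. Dividing by the fixed semi-invariant section $s$, the rational section $f_\lambda := s_\lambda / s^r$ is a $B$-eigenfunction in $\mathbb{C}(X)^{(B)}$ of weight $\chi := \lambda - r\mu_s \in \mathfrak M$, and it is unique (up to scalar) in its weight. Thus the problem reduces to characterising, for $\chi\in \mathfrak M$, when the corresponding $f_\chi$ yields a regular section, i.e.\ when $\operatorname{div}(f_\chi)+rd\geq 0$.

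Next I would compute $\operatorname{div}(f_\chi)$ on each $B$-stable prime divisor. For a $G$-stable divisor $Y\in\mathcal I_X^G$ with primitive generator $u_Y$ one has $v_Y(f_\chi)=\langle u_Y,\chi\rangle$, and for the closure $\bar D$ of a color $D\in\mathcal D$ one has $v_{\bar D}(f_\chi)=\langle\rho(D),\chi\rangle$. Combining with the formula \eqref{div} for $d$ and the fact that $l_d$ is linear on each maximal cone $\mathcal C\in\mathcal F^{\max}(X)$ with slope $v_\mathcal C\in\mathfrak M$, the inequalities $\operatorname{div}(f_\chi)+rd\geq 0$ become
\begin{align*}
\langle u_Y,\chi\rangle + r\, l_d(u_Y)&\geq 0, &Y\in\mathcal I_X^G,\\
\langle \rho(D),\chi\rangle + r\, l_d(\rho(D))&\geq 0, &D\in\mathcal D(X),\\
\langle \rho(D),\chi\rangle + r\, n_D&\geq 0, &D\in\mathcal D\setminus\mathcal D(X).
\end{align*}
The first two families together say exactly that $\chi/r$ lies in $\bigcap_{\mathcal C\in\mathcal F^{\max}(X)}(-v_\mathcal C+\mathcal C^\vee)$, while the last family is precisely the remaining cut-out of $\Delta_d$. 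Hence $s_\lambda$ extends as a regular section of $L^r$ if and only if $\chi/r=(\lambda-r\mu_s)/r\in\Delta_d$.

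Finally I would pass to the closure. The moment polytope is by definition the closure of $\bigcup_r\Delta_r/r$, so the equivalence above gives
\[
\Delta_L=\overline{\bigcup_{r\geq 1}\{\lambda/r : \lambda\in\Delta_r\}}=\mu_s+\Delta_d,
\]
using that rational points $\chi/r\in\Delta_d$ are dense in $\Delta_d$. The only subtle point I expect to encounter is the density step together with the need to check that for every rational $\chi/r\in\Delta_d$ an honest global section of some $L^{r'}$ with $r'$ a suitable multiple of $r$ actually exists; this follows because ampleness of $d$ implies that $l_d$ is strictly convex and the cut-out inequalities are strict on the interior, so for large enough multiples of $r$ the corresponding $f_\chi$ lies in $H^0(X,\mathcal O(rd))$, giving a nonzero $B$-eigensection of $L^r$ of weight $\lambda=\chi+r\mu_s$. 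This is the main technical obstacle; the rest is a direct dictionary between Cartier data on $X$ and the polytope description of $\Delta_d$.
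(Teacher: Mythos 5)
The paper does not prove this statement; it is quoted directly from Brion (\cite{Bri89}, Proposition 3.3), so there is no in-paper argument to compare against. Your proof is correct and is essentially Brion's original one: use multiplicity-freeness to identify $B$-eigensections of $L^r$ of weight $\lambda$ with $B$-eigenfunctions $f_\chi$, $\chi=\lambda-r\mu_s\in\mathfrak M$, translate regularity of $f_\chi\cdot s^{\otimes r}$ into $\operatorname{div}(f_\chi)+rd\geq 0$, and read off the defining inequalities of $r\Delta_d$ via $v_Y(f_\chi)=\langle u_Y,\chi\rangle$, $v_{\bar D}(f_\chi)=\langle\rho(D),\chi\rangle$ and the piecewise linear description \eqref{div} of $d$. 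One remark: the ``main technical obstacle'' you flag at the end is not actually there. The conditions cutting out $\Delta_d$ are closed (non-strict) inequalities, and for any $\chi\in\mathfrak M$ with $\chi/r\in\Delta_d$ the section $f_\chi\cdot s^{\otimes r}$ is already a regular $B$-eigensection of $L^r$ (in characteristic zero its $G$-span is $V_{\chi+r\mu_s}$); no passage to higher multiples $r'$ and no strict convexity of $l_d$ is needed. The reverse inclusion then follows simply because $\Delta_d$ is a rational polytope with respect to $\mathfrak M$, so the points $\chi/r$, $\chi\in\mathfrak M$, are dense in it. Also note that in reducing the inequality $\langle\chi/r,\cdot\rangle+l_d(\cdot)\geq 0$ on a maximal cone $\mathcal C$ to its values at the $u_Y$ and $\rho(D)$, you are implicitly using that these are exactly the generators of the extremal rays of $\mathcal C$ (the colored cone description); it is worth saying this explicitly, but it is standard.
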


\subsection{Torus   $\mathbb C^*$-degenerations  for $\mathbf G$-polarized spherical varieties}

The following classification result is proved in \cite{Del-2020} based on works in  \cite{Do, AB2,AK}. We include its proof from \cite[Section 4]{Del-2020} below for our later use.

\begin{prop}\label{ZTCP}\cite[Theorem 4.1]{Del-2020}
Let $(X,L)$  a polarized $\mathbf G$-spherical variety. Let $d$ be any fixed $\mathbf B$-stable divisor of $L$. Then the $\mathbf G$-equivariant    $\mathbf C^*$-degenerations  of $(X,L)$ are in one-to-one correspondence with positive rational piecewise linear concave functions on $\Delta_d$, with slopes in $\mathcal V$.
\end{prop}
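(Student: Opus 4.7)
\medskip

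\noindent\textbf{Proof plan for Proposition \ref{ZTCP}.} The plan is to realize the total space of any $\mathbf G$-equivariant $\mathbb C^{*}$-degeneration as a polarized $(\mathbf G\times\mathbb C^{*})$-spherical variety of rank one higher than $X$, and then to read off the classifying combinatorial datum from Brion's description of ample $\mathbf B$-stable divisors (Proposition \ref{pl}). Concretely, let $(\mathcal X,\mathcal L)\to\mathbb A^{1}$ be such a degeneration. First I would verify that $\mathcal X$ is $(\mathbf G\times\mathbb C^{*})$-spherical: the $\mathbf B\times\mathbb C^{*}$-orbit of a generic point of the open $\mathbf B$-orbit in the generic fiber $X$ is dense in $\mathcal X$, and the character lattice picks up exactly one new generator, so
\[
\mathfrak M(\mathcal X)=\mathfrak M(X)\oplus\mathbb Z,\qquad \mathfrak N(\mathcal X)_{\mathbb R}=\mathfrak N(X)_{\mathbb R}\oplus\mathbb R,
\]
with valuation cone $\mathcal V(\mathcal X)=\mathcal V(X)\times\mathbb R$ (the second factor being free because $\mathbb A^{1}$ is toric in the $\mathbb C^{*}$-direction).

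Next I would compute the moment polytope $\Delta_{\mathcal L}\subseteq\mathfrak M(X)_{\mathbb R}\oplus\mathbb R$. Flatness of $\pi$ together with $\mathcal L|_{\pi^{-1}(t)}\cong L^{r}$ for $t\neq 0$ forces the projection of $\Delta_{\mathcal L}$ onto the first factor to coincide (up to the normalization $\mu_{s}$ supplied by the chosen $\mathbf B$-semi-invariant section of $L$ with divisor $d$) with $r\Delta_{d}$, and the general horizontal slice of $\Delta_{\mathcal L}$ recovers $\Delta_{L}$. After rescaling by $r$ one can therefore write
\[
\Delta_{\mathcal L}=\bigl\{(m,h)\in\mathfrak M(X)_{\mathbb R}\oplus\mathbb R:\ m\in \Delta_{d},\ 0\le h\le f(m)\bigr\}
\]
for a uniquely determined function $f:\Delta_{d}\to\mathbb R_{\ge 0}$. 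Convexity of the moment polytope forces $f$ concave, integrality of $\mathcal L$ and of the $\mathbf G\times\mathbb C^{*}$-action force $f$ to be rational piecewise linear, and the positivity of $f$ is equivalent to the central fiber being non-empty.

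The crucial step is the slope condition. Each linearity chamber of $f$ corresponds to a maximal cone of $\mathcal F(\mathcal X)$ in $\mathfrak N(\mathcal X)_{\mathbb R}$, and the graph of $f$ parametrizes the new $\mathbf G\times\mathbb C^{*}$-stable divisors appearing in the central fiber $\mathcal X_{0}$. The indivisible generator $u_{Y}$ of the ray associated to such a divisor $Y$ has the form $(\lambda,1)\in \mathfrak N(X)_{\mathbb Q}\oplus\mathbb Z$, where $\lambda$ is exactly a slope of $f$. In order for $\mathcal F(\mathcal X)$ to be a valid colored fan, each $u_{Y}$ must lie in $\mathcal V(\mathcal X)=\mathcal V(X)\times\mathbb R$, which is precisely the requirement that every slope of $f$ belongs to $\mathcal V$. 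Conversely, starting from a positive rational PL concave $f$ on $\Delta_{d}$ with slopes in $\mathcal V$, I would build the polytope displayed above, check Brion's ampleness criterion after Proposition \ref{pl}, obtain an ample polarized $(\mathbf G\times\mathbb C^{*})$-spherical variety, and verify that the resulting $\mathbb C^{*}$-map to $\mathbb A^{1}$ is a flat $\mathbf G$-equivariant degeneration with generic fiber $(X,L)$ by comparing moment polytopes fiberwise.

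The main obstacle will be Step 3, the slope identification: one must be careful to separate the contribution of already existing $\mathbf G$-stable divisors (which survive to $\mathcal X$ without altering the combinatorics in the new direction) from the genuinely new $\mathbf G\times\mathbb C^{*}$-stable divisors in $\mathcal X_{0}$, and to verify that the colored set $\mathcal R$ attached to each new cone is forced by compatibility with the colored fan of $X$ — so that the datum of $f$ alone recovers the entire colored fan of $\mathcal X$, and hence $(\mathcal X,\mathcal L)$ uniquely.
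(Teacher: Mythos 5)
Your plan follows essentially the same route as the paper's proof (itself reproduced from Delcroix): realize the compactified total space as a polarized $(\mathbf G\times\mathbb C^{*})$-spherical variety, identify its moment/divisor polytope with the subgraph $\{(y,t): y\in\Delta_d,\ 0\le t\le g(y)\}$ of a positive rational concave PL function, read the slope condition off the Luna--Vust data of the $(\mathbf G\times\mathbb C^{*})$-stable components of $\mathcal X_0$ (whose images $(u_i,-m_i)$ give slopes $\tfrac{r}{m_i}u_i\in\mathcal V$), and reverse the construction via the colored fan and Brion's ampleness criterion. Only minor normalization slips appear (the horizontal \emph{projection}, not the generic slice, of $\Delta_{\mathcal L}$ recovers $r\Delta_d$, and the primitive ray generators are $(u_i,-m_i)$ rather than $(\lambda,1)$), neither of which affects the argument.
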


\begin{proof}
Let $(\mathcal{X}, \mathcal{L})$ be a $\mathbf G$-equivariant test configuration for $(X, L)$. Recall the compactification of $(\mathcal{X}, \mathcal{L})$ . That is, we glue the trivial family over $\mathbb{C}$ to $(\mathcal{X}, \mathcal{L})$ along $\mathbb{C}^{*}$ to obtain a family over $\mathbb{P}^{1}$. We denote the point added to $\mathbb{C}$ by $\infty$ and still denote by $(\mathcal{X}, \mathcal{L})$ the compactified family.

{Up to adding a sufficiently large multiple of the pull back of $\{0\}\in\mathbb P^1$, we may assume that $\mathcal L$ is ample.} Then $(\mathcal{X}, \mathcal{L})$ is a polarized spherical variety under the action of $G \times \mathbb{C}^{*}$. Its open orbit is $G / H \times \mathbb{C}^{*}$, and the combinatorial data are easily derived from that of $X$.

{Suppose that $s$ is the defining section of $d$.} Let $\hat{s}$ be the $\mathbb{C}^{*}$-invariant meromorphic section of $\mathcal{L}$ whose restriction to $\left(X, L^{r}\right)=$ $\left(\mathcal X_{1}, \mathcal{L}_{1}\right)$ coincides with $s^{\otimes r}$. Then the divisor $\hat d$ of $\hat{s}$ is $\left(B \times \mathbb{C}^{*}\right)$-stable, hence an integral linear combination of the form
\begin{align}\label{bdiv}
\hat d=\sum_{\hat{D} \in \mathcal{P}_{\mathcal{X}}} n_{\hat{D}} \hat{D}
\end{align}
where $\mathcal{P}_{\mathcal{X}}$ is the set of prime $\left(B \times \mathbb{C}^{*}\right)$-stable divisors on $\mathcal{X}$.
There are three types of divisors in $\mathcal{P}_{\mathcal{X}}$:
\begin{itemize}
\item[(1)] The divisors $\{\hat{D} \in \mathcal{P}_{\mathcal{X}}|\hat{\varrho}(\hat{D}) \in(\mathfrak N_\mathbb{Q} \times\{0\})\}$. Any such $\hat D$ satisfies $\hat{D}=\overline{D \times \mathbb{C}^{*}}$ for some $D \in \mathcal{P}_{X}$. Since $\hat s|_{\mathcal X_1}\cong rL$, $n_{\hat{D}}=r n_{D}$. In particular, any color of $G/H\times \mathbb C^*$ must be of this type; 
\end{itemize}
The remaining two types consist of the following $\left(G \times \mathbb{C}^{*}\right)$-stable divisors of $\mathcal X$:
\begin{itemize}
\item[(2)] The the fiber $X_{\infty}$ at $\infty\in\mathbb P^1$. Since $\hat s|_{\mathcal X\setminus\mathcal X_0}\cong s\times(\mathbb P^1\setminus\{0\})$, $n_{\mathcal X_0}=0$;

\item[(3)] The reduced, irreducible components $\mathcal X_{0,1},...,\mathcal X_{0,n_0}$ of $\mathcal X_0$. In this case $\hat\rho(\mathcal X_{0,i})=(u_i, -m_i)$ is a primitive vector in $\mathfrak N\oplus\mathbb Z$ with $u_i \in \mathcal{V}$ and $m_i>0$. Let us write also $n_i$ for the corresponding coefficient in (\ref{bdiv}).
\end{itemize}
Let $\hat{\Delta}$ denote the polytope in $\hat{\mathcal M}_\mathbb{R}$ associated to the divisor (\ref{bdiv}). In view of the previous description of the divisor, the polytope can be described as
$$
\hat{\Delta}=\{(r y, t) \mid y \in \Delta, 0 \leq t \leq g(x)\}
$$
where $g$ is a (positive) rational concave piecewise linear function on $\Delta$, expressed as
\begin{align}\label{pl-function}
g(x)=\inf _{i=1,...,n_0}\left(\frac{r u_i(y)+n_i}{m_i}\right) .
\end{align}
Note that since each $m_i$ is positive, each $\frac r{m_i}  u_i\in\mathcal{V}$.

Conversely, let $g$ be a positive rational piecewise linear concave function on $\Delta_d$ with slopes in the valuation cone of $X$. Then we may assume that $g>0$ is of form \eqref{pl-function} for some $r\in \mathbb N_+$, each $n_i\in\mathbb Z$, $u_i\in\mathcal V\cap\mathfrak N$ and $m_i\in\mathfrak N_+$ so that $(u_i,-m_i)$ is a primitive vector in $\mathfrak N\oplus\mathbb Z$. Consider the polytope
$$
\hat{\Delta}=\{(r x, t) \mid x \in \Delta_d, 0 \leq t \leq g(p)\},
$$
where $d$ is the $B$-semiinvariant divisor of the line bundle $L$ as in (\ref{div}).

Note that the colors of $\mathbf G / \mathbf H \times \mathbb{C}^{*}$ can be identified with colors of $ \mathbf G / \mathbf H$. Indeed, for any color $\hat D\in\hat{\mathcal D}( \mathbf G / \mathbf H\times \mathbb C^*)$, $\overline{\hat D}\cap G/H\in\mathcal D( \mathbf G / \mathbf H)$. Conversely, for every $D\in\mathcal D(G/H)$, $\overline{D\times\mathbb C^*}\in\hat{\mathcal D}( \mathbf G / \mathbf H \times \mathbb C^*)$. We are going to define a colored fan consisting of the following maximal colored cones:
\begin{itemize}
\item[(1')] $(\mathcal C \times\{0\}, \mathcal R)$, where $(\mathcal C, \mathcal R)$ is a colored cone of $\mathcal{F}(X)$;
\item[(2')] $(\mathcal C \times \mathbb{R}_{+}, \mathcal R)$, where $(\mathcal C, \mathcal R)$ is a colored cone of $\mathcal{F}(X)$;
\item[(3')] The inner normal cone of $\hat \Delta$ at a vertex $v$ of the graph of $g$. In this case, we define the colored cone $(\mathcal C_v, \mathcal R_v)$, where $\mathcal C_v$ is the inner normal cone at $v$ and 
    $$\mathcal R_v=\{\hat D=\overline{D\times C^*}|D\in {\mathcal D}( \mathbf G / \mathbf H)~\text{satisfies}~n_{D}-\rho(D)(v)=0\}.$$ 
    Here in the last equality $\rho(D)$ is considered as an vector in $\mathfrak N_\mathbb Q\times\{0\}\subset\hat{\mathfrak N}_\mathbb Q$.

\end{itemize}

The polytope $\hat{\Delta}$ is the polytope associated to the $\left(B \times \mathbb{C}^{*}\right)$-stable Cartier divisor
$$
\hat d=\sum_{\hat{D} \in \mathcal{P}_{X}} n_{\hat{D}} \hat{D}
$$
where
\begin{itemize}
\item[(1")] $n_{\hat{D}}=r n_{D}$ for each divisor $\hat{D} \in \mathcal{P}_{\mathcal{X}}$ so that $\hat{D}=\overline{D \times \mathbb{C}^{*}}$ for some $D \in \mathcal{P}_{X}$;
\item[(2")] $n_{X_{\infty}}=0$ for the only divisor $X_{\infty}$;
\item[(3")] $n_{\hat{D}}=n_{i}$ for the $G\times\mathbb C^*$-stable prime divisor $\hat D_i$ corresponding to $(u_i, -m_i)$. 
\end{itemize}
In particular, the restriction of $\hat d$ to $\mathcal X \setminus\{\hat D_i|i-1,...,n_0\}$ is the product of a divisor $d$ of form \eqref{B-divisor} in $X$ with $\mathbb{C}^{*} \cup\{\infty\}$. Furthermore, $d$ satisfies the ampleness criterion. The associated line bundle $\mathcal{O}_{\mathcal X}(\hat d)$ is $(G \times \mathbb{C}^{*})$-linearizable (up to passing to a sufficiently divisible tensor power). Choosing the linearization such that the natural section $\hat s$ of $\mathcal{O}_{\mathcal X}(\hat d)$ is $\mathbb{C}^{*}$-invariant and has $B$-weight $r\chi$. It follows $\mathcal{O}_{\mathcal X}(\hat d)|_{X \times(\mathbb{C}^{*} \cup\{\infty\})}\cong(X,rL)\times(\mathbb{C}^{*} \cup\{\infty\})$. Hence $(\mathcal{X}, \mathcal{L})$ is a test configuration of $(X,L)$.

\end{proof}

According to Proposition \ref{ZTCP}, a  $\mathbf G$-equivariant    $\mathbb C^*$-degeneration on  a $\mathbf G$-spherical variety is determined by a  rational piecewise linear function.   As    $\mathbb C^*$-degenerations on toric manifolds introduced  by Donaldson \cite{Do},   we  will use the notion  of  torus   $\mathbb C^*$-degenerations for such  degenerations for simplicity.  

If $\tau(s)$ is a $1$-PS in ${\rm GL}(H^0(X,L^k))$ commuting with $\mathbf G$, then the eigenspaces of $\tau(s)$ are $\mathbf G$-invariant subspaces of $H^0(X,L^k)$. Fix a $\mathbf B$-semiinvariant divisor of $L^k$ of form \eqref{B-divisor} with associated $\mathbf B$-character $\mu_d$. The eigenvalues of $\tau(s)$ is determined by the eigenvalues $s^{(k)}_\lambda$ on $V_{\lambda+k\mu_d}$ for $\lambda \in k\Delta_d\cap\mathfrak M(\mathbf G/\mathbf H)$.

\begin{lem}\label{ncf}
Let $(X,L)$ be a polarized projective variety and $\mathcal X$ a  torus $\mathbb C^*$-degeneration of $(X,L)$.   Then the central fiber of $\mathcal X$ is normal if and only if it corresponds to an integral affine function
\begin{align}\label{f-affine}
f(y)=C+\Lambda(y),~y\in \Delta_d
\end{align}
with $\Lambda\in\mathcal V\cap\mathfrak N$. Moreover, the eigenvalues of the 1-PS subgroup in ${\mathbb P}(H^0(X,L^k))$ corresponding to $\mathcal X$ is
\begin{align}\label{ZTC-point-dis-special}
s^{(k)}_\lambda=kf(\frac\lambda k),~\forall \lambda\in k\Delta_d\cap\mathfrak M~.
\end{align}
\end{lem}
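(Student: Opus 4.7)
The plan is to apply the bijection of Proposition \ref{ZTCP} and identify precisely which positive rational piecewise linear concave functions correspond to a normal central fibre. By that proposition, the torus $\mathbb C^*$-degeneration $\mathcal X$ of $(X,L)$ is encoded by a function on $\Delta_d$ of the form
\begin{equation*}
g(y)=\inf_{i=1,\ldots,n_0}\frac{ru_i(y)+n_i}{m_i},
\end{equation*}
where each $(u_i,-m_i)\in\mathfrak N\oplus\mathbb Z$ is primitive with $u_i\in\mathcal V$, $m_i\in\mathbb Z_{>0}$ and $n_i\in\mathbb Z$. The proof of Proposition \ref{ZTCP} also shows that the central fibre is the Weil divisor $\mathcal X_0=\sum_{i=1}^{n_0}m_i\mathcal X_{0,i}$, where $\mathcal X_{0,i}$ is the normal $\mathbf G$-spherical prime divisor attached to the ray spanned by $(u_i,-m_i)$ in the colored fan of $\mathcal X$.

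Because each component $\mathcal X_{0,i}$ is itself a normal spherical variety arising from a colored subfan, normality of $\mathcal X_0$ is equivalent to irreducibility together with reducedness. Irreducibility forces $n_0=1$, so that $g$ is a single affine piece, and reducedness forces the multiplicity $m_1=1$, since otherwise $\mathcal X_0=m_1\mathcal X_{0,1}$ has generic multiplicity $m_1>1$. Consequently $g(y)=ru(y)+n$ with $u\in\mathcal V\cap\mathfrak N$ and $n\in\mathbb Z$, and the corresponding function on $\Delta_d$ is $f(y):=g(y)/r=C+\Lambda(y)$ with slope $\Lambda:=u\in\mathcal V\cap\mathfrak N$ and constant $C:=n/r$. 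Conversely, any such $f$ (after rescaling so that $rC\in\mathbb Z$) is a single-term admissible infimum with primitive ray $(\Lambda,-1)$, and the construction in the second half of the proof of Proposition \ref{ZTCP} produces a torus $\mathbb C^*$-degeneration whose central fibre has a unique $\mathbf G\times\mathbb C^*$-stable divisor of multiplicity one, hence is a normal spherical variety.

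Finally, the eigenvalue formula follows from the $\mathbf G$-equivariance of the associated $1$-PS $\tau(s)\subset{\rm GL}(H^0(X,L^k))$: since $\tau(s)$ commutes with $\mathbf G$, it acts by a single scalar $s^{(k)}_\lambda$ on each isotypic summand in the decomposition $H^0(X,L^k)=\bigoplus_{\lambda\in k\Delta_d\cap\mathfrak M}V_{\lambda+k\mu_d}$. Using Proposition \ref{pl} together with the linearization of $\mathcal L^k$ read off from $f$, this scalar splits into a constant piece $kC$, coming from the $\mathbb C^*$-weight of a reference $B$-semiinvariant section of $\mathcal L^k$, and a character piece $\Lambda(\lambda)$, coming from the value of $\Lambda\in\mathfrak N$ on the highest weight $\lambda$, whence $s^{(k)}_\lambda=kC+\Lambda(\lambda)=kf(\lambda/k)$. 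The main subtlety, and the step I would expect to need most care, is precisely this bookkeeping of the linearization of $\mathcal L$ in terms of $f$: one must simultaneously track the weight of the natural meromorphic section $\hat s$ constructed in the proof of Proposition \ref{ZTCP} and the torus action on the character lattice so that the two contributions combine into the stated single affine expression.
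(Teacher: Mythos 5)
Your proposal is correct and follows essentially the same route as the paper: both identify the central fibre with the $\mathbf G\times\mathbb C^*$-stable divisor $\sum_a m_a\hat D_a$ read off from the pieces of the concave function in Proposition \ref{ZTCP}, reduce normality to the fibre being prime (reduced and irreducible, using that a prime $\mathbf G\times\mathbb C^*$-stable divisor in the normal spherical total space is itself normal), and deduce $N_f=1$, $m_1=1$, hence a single integral affine piece, with the eigenvalue formula extracted from the $\mathbb C^*$-weights in the construction of Proposition \ref{ZTCP}.
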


\begin{proof}
Note that the total space $(\mathcal X,\mathcal L)$ is a polarized spherical embedding of $ \mathbf G/\mathbf H \times\mathbb C^*$ and the central fibre $\mathcal X_0$ is a $G\times \mathbb C^*$-invariant divisor. By \cite[Theorem 15.20]{Tim}, it suffices to show that $\mathcal X_0$ is prime.

$\mathcal X$ corresponds to a concave, rational piecewise linear function
$$f(y)=\min_{a=1,...,N_f}\{C_a+\Lambda_a(y)\},~y\in \Delta_d$$
with each $C_a\in\mathbb Q$ and $\Lambda\in\mathcal V\cap\mathfrak N$. Then
$$\mathcal X_0=\sum_{a=1}^{N_f}m_a\hat D_a,$$
where $\hat D_a$ is the prime $ \mathbf G/\mathbf H\times\mathbb C^*$-divisor corresponding to the $a$-th piece of $f$ and $m_a$ is the smallest positive integer so that $m_a\Lambda_a\in\mathfrak N$. Clearly $\mathcal X_0$ is prime if $N_f=1$ and $m_1=1$. Thus we get \eqref{f-affine}. \eqref{ZTC-point-dis-special} follows from the $\mathbb C^*$-action in the construction in Proposition \ref{ZTCP}.
\end{proof}

In the case of   torus $\mathbb C^*$-degeneration with reduced center fiber in Lemma \ref{ncf},   the central fiber is a spherical embedding of $ \mathbf G/\mathbf H_0$ for some $\mathbf H_0$ by Theorem 3.30 in \cite{Del1}{\color{red}}. $\mathbf H_0$ can be computed as follows: Choose $x_0\in X$ such that the isotropy group of $x_0$ is $H$ and take $\hat x_0=(x_0,1).$ Put $\bar x_0=\lim_{t\rightarrow 0}(\Lambda(t),t)\hat x_0$. Then $\mathbf H_0$ is the isotropy group of $\bar x_0$. Since the isotropy group of $(\Lambda(t),t^m)\hat x_0$ is $\mathbf H_t={\rm Ad}_{(\Lambda(t),t)}\mathbf H$ and the limit of $\mathbf H_t$ is a subgroup of $\mathbf H_0$, it must hold by dimension that
$$\mathbf H_0=\lim_{t\rightarrow 0}{\rm Ad}_{(\Lambda(t),t)}\mathbf H.$$

Now we give a  classification of   torus   $1$-PS degenerations associated to  $\mathbf G-$equivariant  $1$-PS $\sigma(t)$  in ${\rm GL}(H^0(X,L^k))$.  For simplicity,  we also denote the eigenvalues of $\sigma(t)$ on $V_\mu$ by $s^{(k)}_\mu$. Denote by $\mathcal V_{\mathbb R}$ the closure of $\mathcal V$ in $\mathcal V\otimes \mathbb R$.

\begin{prop}\label{mp}
Suppose that   the center  ${\mathcal X}_0^\sigma$  of  torus   $1$-PS  degeneration $(X, \sigma(t))$  is normal.   Then there exists $\Lambda \in \mathcal V_{\mathbb R}$ and a constant $C$ such that $s^{(k)}_\mu=\Lambda(\mu)+C, \forall \mu \in \Delta_k$. The converse is also true.
\end{prop}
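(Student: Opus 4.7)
The strategy is to reduce both implications to the rational $\mathbb C^*$-degeneration case handled by Lemma \ref{ncf}, using the approximation results of Section 2 to pass between real and rational $1$-PS degenerations. Since $X$ is spherical, each isotypic component $V_\mu \subset H^0(X,L^k)$ appears with multiplicity one, so the $\mathbf G$-equivariant $1$-PS $\sigma(t)$ acts on $V_\mu$ by a single scalar $e^{-s^{(k)}_\mu t}$, and the collection $\{s^{(k)}_\mu\}$ completely encodes $\sigma(t)$.

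For the forward direction, assume $\mathcal X_0^\sigma$ is normal. Proposition \ref{RTC-appr} produces a sequence of $\mathbf G$-equivariant rational $1$-PS degenerations $\tau_i(t)$ whose eigenvalues approximate those of $\sigma(t)$ and satisfy $\mathcal X_0^{\tau_i} = \mathcal X_0^\sigma$. After rescaling $\tau_i(Nt)$ with a sufficiently divisible integer $N$, each becomes a genuine $\mathbf G$-equivariant $\mathbb C^*$-degeneration with normal central fiber, so by Lemma \ref{ncf} the associated concave piecewise linear function on $\Delta_d$ coming from Proposition \ref{ZTCP} is affine: $f_i(y) = C_i + \Lambda_i(y)$ with $\Lambda_i \in \mathcal V \cap \mathfrak N_\mathbb Q$. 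Formula \eqref{ZTC-point-dis-special} then gives $s^{(k)}_\mu(\tau_i) = kC_i + \Lambda_i(\mu)$ for all $\mu \in k\Delta_d\cap \mathfrak M$. Since convergence of the eigenvalues forces convergence of the affine functionals $\mu \mapsto s^{(k)}_\mu(\tau_i)$ on the spanning set $k\Delta_d \cap \mathfrak M$, we obtain $\Lambda_i \to \Lambda \in \mathcal V_\mathbb R$ and $kC_i \to kC$ for some $C \in \mathbb R$, yielding $s^{(k)}_\mu = \Lambda(\mu) + kC$.

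For the converse, suppose $s^{(k)}_\mu = \Lambda(\mu) + C$ with $\Lambda \in \mathcal V_\mathbb R$. Approximate $\Lambda$ by rational $\Lambda_i \in \mathcal V \cap \mathfrak N_\mathbb Q$ in a type-preserving fashion, that is, subject to the integer-linear equations and strict inequalities governing the type of the induced action of $\sigma$ on the Hilbert point of $X$. As in the proof of Lemma \ref{appr1}, these conditions cut out a rational convex subset of $\mathcal V_\mathbb R$ whose relative interior contains $\Lambda$, so rational approximants exist. For each $\Lambda_i$ together with a rational $C_i$ near $C$, Proposition \ref{ZTCP} and Lemma \ref{ncf} produce a $\mathbf G$-equivariant $\mathbb C^*$-degeneration $\tau_i$ with affine PL function $f_i(y) = C_i + \Lambda_i(y)$ and normal central fiber $\mathcal X_0^{\tau_i}$. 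By construction $\sigma$ and $\tau_i$ have eigenvalues of the same type, so Lemma \ref{sim} gives $\mathcal X_0^\sigma = \mathcal X_0^{\tau_i}$, which is therefore normal.

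The main technical point is the type-preserving rational approximation in the converse direction: one must place $\Lambda_i$ simultaneously in $\mathcal V$, make it rational, and enforce every accidental equality of eigenvalues of $\sigma$ on the Hilbert embedding. This reduces, as in Lemma \ref{appr1}, to the observation that the governing equalities and inequalities are integer-linear in $\Lambda$, hence define a rational polyhedron inside $\mathcal V_\mathbb R$ whose relative interior contains $\Lambda$, guaranteeing the required rational approximants.
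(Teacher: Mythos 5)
Your proposal is correct and follows essentially the same route as the paper: the forward direction via Proposition \ref{RTC-appr} to reduce to rational $\mathbb C^*$-degenerations with the same (normal) central fiber, then Lemma \ref{ncf} and a limiting argument on the affine data $(\Lambda_i, C_i)$; the converse via type-preserving rational approximation of $\Lambda$ inside the rational polyhedral cone $\mathcal V$, followed by Lemma \ref{sim} and Lemma \ref{ncf}. The paper additionally records a second, more direct argument for the forward direction using the $\mathbf B$-highest-weight defining equations of $X$, but your argument coincides with its primary proof.
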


\begin{proof}
By Proposition \ref{RTC-appr}, we can find a sequence of $G$-equvariant real 1-PS $\tau^i(t)$ with eigenvalues $s^{(k,i)}_\mu\in \mathbb Q$ converges to $s^{(k)}_\mu$ such that the central fiber $\mathcal X^{\tau^i(t)}_0$ is the same as $\mathcal X^{\sigma}_0$. For a sufficently divisible $N$, $\tilde \tau^i(t)=\tau^i(Nt)$ is a 1-PS. Applying Lemma \ref{ncf} to $\tilde \tau^i(t)$, there is $\tilde \Lambda_i\in \mathcal V$ and constant $\tilde C_i$ such that $Ns^{(k,i)}_\mu=\tilde \Lambda_i(\mu)+\tilde C_i$. Therefore $s^{(k,i)}_\mu=\Lambda_i(\mu)+C_i$ for some $\Lambda_i\in \mathcal V$ and constant $C_i$.
Since $s^{(k,i)}_\mu$ converges to $s^{(k)}_\mu$, by subsequence $\Lambda_i$ and $C_i$ converge to $\Lambda\in \mathcal V_{\mathbb R}$ and $C\in \mathbb R$. Then $s^{(k)}_\mu=\Lambda(\mu)+C$ and the Proposition is proved.

We can also prove this as follows. In the decomposition $H^0(X,L^k)=\oplus_{\mu\in \Delta_k} V_\mu$, choose sections $x_{\mu}\in V_\mu$ with highest weight $\mu$, i.e., 
$$b\cdot x_\mu =\mu(b)x_{\mu}, \forall b\in B.$$ For any equality such as $\sum_{i\in I}\mu_i=\sum_{j\in J}\mu_j$ with $|I|=|J|$, there is a constant $c$ such that $\Pi_{i\in I}x_{\mu_i}=c\Pi_{j\in J}x_{\mu_j}$ is a defining equation of $X$ in $\mathbb P(H^0(X,L^k))$. The weight of the two sides of the equation are $\sum_{i\in I}s^{(k)}_{\mu_i}$ and $\sum_{j\in J}s^{(k)}_{\mu_j}$. If the two weights are not the same, there will be a defining equation $\Pi_{i\in I}x_{\mu_i}=0$ or $\Pi_{j\in J}x_{\mu_j}=0$ on the limit ${\mathcal X}_0^\sigma$. It contradicts with the normality. So we must have 
$$\sum_{i\in I}s^{(k)}_{\mu_i}=\sum_{j\in J}s^{(k)}_{\mu_j}.$$ If follows that there is a affine function $\Lambda(\mu)+C$ such that $s^{(k)}_\mu=\Lambda(\mu)+C$. We are going to show that $\Lambda \in \mathcal V$. Consider the induced action of $\sigma(t)$ on $H^0(X,L^{rk})$ for any $r\geq 1$. The eigenvalues $s^{(rk)}_\mu(\mu \in \Delta_{kr})$ of the induced action is also equal to $\Lambda_r(\mu)+C_r$ for some $\Lambda_r$ and $C_r$. We have the decompostion $$V_{\mu_1}\cdot V_{\mu_2}...\cdot V_{\mu_r}=\oplus_\alpha V_{\sum_{i=1}^r \mu_i-\alpha},$$ where $\alpha$ is some non-negative linear combination of simple roots of $G$. The eigenvalue of $\sigma(t)$ on $V_{\sum_{i=1}^r \mu_i}$ is equal to $\sum_{i=1}^r s^{(k)}_{\mu_i}$. So we get $\sum_{i=1}^r (\Lambda({\mu_i})+C)=\Lambda_r({\sum_{i=1}^r \mu_i})+C_r$. It follow that $\Lambda _r=\Lambda, C_r=rC$. On the other component $V_{\sum_{i=1}^r \mu_i-\alpha}$, the eigenvalue of $\sigma(t)$ is at least $\sum_{i=1}^r s^{(k)}_{\mu_i}$. So $\Lambda(-\alpha
)\geq 0$. Since $\mathcal V$ is minus the dual cone of all such $\alpha's$, we know that $\Lambda\in \mathcal V_\mathbb R$.

Conversely, given any $\Lambda \in \mathcal V_{\mathbb R}$ and any constant $C$, the eigenvalues $s^{(k)}_\mu=\Lambda(\mu)+C$ define a $G$-equivariant real 1-PS. Denote it by $\sigma(t)$. We have to show that the central fiber is normal. The equations determines the type of the eigenvalues of induced action of $\sigma(t)$ on the Hilbert point have the following form:
 $$\sum_{i\in I} \Lambda(u_i)=\sum_{j\in J} \Lambda(u_j), |I|=|J|.$$ So we can choose a sequence of $\Lambda_i\in \mathcal V$ which are solutions of these equations and converges to $\Lambda$. Denote the real 1-PS with eigenvalues $\Lambda_i(\mu)+C$ by $\tau^i(t)$. Then the types of the eigenvalues of induced action of $\tau^i(t)$ and $\sigma(t)$ are the same. So we have $\mathcal X_0^{\tau^i(t)}=\mathcal X_0^{\sigma(t)}$. By Lemma \ref{ncf} $\mathcal X_0^{\tau^i(t)}$ is normal. Hence $\mathcal X_0^{\sigma(t)}$ is normal and the proposition is proved.  
\end{proof}

\section{H-invariant  on  $\mathbf G$-spherical Fano varieties}

\subsection{Special  torus $1$-PS  degenerations}

For a spherical variety $X$, $K^{-1}_X$ is a Weil divisor, which can be written as

\begin{align}\label{candiv}
d=\sum_{Y \in \mathcal{I}_{X}^{G}} Y+\sum_{D \in \mathcal{D}} n_{D} \bar{D}
\end{align}
where the $n_{D}$ are explicitly obtained in terms of $\kappa_{P}$  and the types of the roots (see \cite{GH15} for a precise description of these coefficients). If this is an ample $\mathbb Q$-Cartier divisor, then $X$ is a $\mathbb Q$-Fano spherical variety. In this case, there is a $B$-semi-invariant section $s$ of $K^{-1}_X$ of weight $\kappa_{P}$ such that $div(s)=d$.

The moment polytope $\Delta_{+}:=\Delta_{K_X^{-1}}$ is then $\kappa_{P}+\Delta_{d}$ by Proposition \ref{pl}, and furthermore the dual polytope $\Delta_{d}^{*}$ of $\Delta_{d}$ is a $\mathbb{Q}$-$G / H$-reflexive polytope in the sense of \cite{GH15}, which can be obtained as the convex hull:
$$
\Delta_{d}^{*}=\operatorname{conv}\left(\left\{\rho_{D} / n_{D}, D \in \mathcal{D}\right\} \cup\left\{u_{Y}, Y \in \mathcal{I}_{X}^{G}\right\}\right) .
$$
The $\mathbb{Q}$-Fano variety $X$ can further be recovered from its $\mathbb{Q}$-$G / H$-reflexive polytope $\Delta_{d}^{*}$ by the following procedure, detailed in \cite{GH15}. The colored fan of $X$ is obtained from $\Delta^*_{d}$ as the union of the colored cones $({\rm Cone}(F), \rho^{-1}(F))$ for all faces $F$ of $\Delta_{d}^{*}$ such that the intersection of the relative interior of $\operatorname{Cone}(F)$ with the valuation cone $\mathcal{V}$ is not empty.

By Lemma \ref{ncf}, the special  torus $\mathbb C^*$-degeneration of a spherical Fano variety corresponds to a linear function on $\Delta_+=\Delta_d+\kappa_P$. More precisely, the constant term can be determined by the following lemma.

\begin{prop}\label{qsp}
The special   torus    $\mathbb C^*$-degeneration  on a $\mathbb Q$-Fano   spherical variety  corresponds to  $f(y)=\Lambda y$ for $\Lambda \in \mathcal V\bigcap \mathfrak N$ and $y\in\Delta_d$.
\end{prop}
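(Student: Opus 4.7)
The plan is to combine Lemma \ref{ncf} with an analysis of the relative anticanonical divisor on the compactified family. Since the central fiber $\mathcal{X}_0$ of a special torus $\mathbb{C}^*$-degeneration is $\mathbb{Q}$-Fano hence normal, Lemma \ref{ncf} tells us that the associated piecewise linear function on $\Delta_d$ is in fact affine: $f(y) = C + \Lambda(y)$ with $\Lambda \in \mathcal{V} \cap \mathfrak{N}$ and $C \in \mathbb{Q}$. It therefore suffices to prove $C = 0$.

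Compactifying the family to $(\bar{\mathcal{X}}, \bar{\mathcal{L}}) \to \mathbb{P}^1$, I view $\bar{\mathcal{X}}$ as a polarized $\mathbf{G} \times \mathbb{C}^*$-spherical variety. Its $\mathbf{G} \times \mathbb{C}^*$-invariant prime divisors consist of the two fibers $\mathcal{X}_0, X_\infty$ together with the horizontal lifts $\overline{Y \times \mathbb{C}^*}$ for $Y \in \mathcal{I}_X^G$; its colors are the lifts $\overline{\bar{D} \times \mathbb{C}^*}$ for $D \in \mathcal{D}(\mathbf{G}/\mathbf{H})$. Applying the anticanonical formula \eqref{candiv} to $\bar{\mathcal{X}}$ together with $\pi^{*} K_{\mathbb{P}^1} = -\mathcal{X}_0 - X_\infty$, I obtain
\[
-K_{\bar{\mathcal{X}}/\mathbb{P}^1} = \sum_{Y \in \mathcal{I}_X^G} \overline{Y \times \mathbb{C}^*} + \sum_{D \in \mathcal{D}} n_D \overline{\bar{D} \times \mathbb{C}^*},
\]
in which neither $\mathcal{X}_0$ nor $X_\infty$ appears.

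Invoking the special condition ($c = 0$ in Lemma \ref{anti}), $\bar{\mathcal{L}}$ is identified with an appropriate power of $-K_{\bar{\mathcal{X}}/\mathbb{P}^1}$ as a $\mathbf{G} \times \mathbb{C}^*$-linearized line bundle. Using the product structure $\bar{\mathcal{X}} \setminus \mathcal{X}_0 \cong X \times (\mathbb{P}^1 \setminus \{0\})$ provided by the $\mathbb{C}^*$-action, the canonical $\mathbf{B}$-semi-invariant anticanonical section of $X$ extends uniquely to a $\mathbb{C}^*$-invariant $(\mathbf{B} \times \mathbb{C}^*)$-semi-invariant section $\hat{s}$ of $\bar{\mathcal{L}}$, whose divisor matches the relative anticanonical displayed above and thus has vanishing $\mathcal{X}_0$-coefficient. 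On the other hand, the explicit construction in the proof of Proposition \ref{ZTCP} shows that the natural section associated to an affine $g(y) = C + \Lambda(y)$ has $\mathcal{X}_0$-coefficient proportional to $C$. Matching these two descriptions forces $C = 0$, establishing $f(y) = \Lambda(y)$. Conversely, any $\Lambda \in \mathcal{V} \cap \mathfrak{N}$ produces via $f(y) = \Lambda(y)$ a degeneration whose central fiber, by the reverse construction in Proposition \ref{ZTCP}, is a normal spherical variety with $\mathcal{L}|_{\mathcal{X}_0}$ a positive power of $K_{\mathcal{X}_0}^{-1}$, hence $\mathbb{Q}$-Fano.

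The main subtlety will be verifying that the canonically extended $\hat{s}$ agrees (up to scalar) with the natural section implicit in the proof of Proposition \ref{ZTCP}; this reduces to the uniqueness, up to scalar, of the $(\mathbf{B} \times \mathbb{C}^*)$-semi-invariant section of $\bar{\mathcal{L}}$ of prescribed weight, a direct consequence of sphericality.
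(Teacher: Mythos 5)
Your proposal is correct and follows essentially the same route as the paper's proof: reduce to an affine $f(y)=C+\Lambda(y)$ via Lemma \ref{ncf}, then kill the constant $C$ by comparing the $\mathcal X_0$-coefficient of the divisor of the natural $(\mathbf B\times\mathbb C^*)$-semi-invariant section (which is proportional to $C$ by the construction in Proposition \ref{ZTCP}) with that of the relative anticanonical divisor forced by the special condition (which is zero). The paper states this more tersely—asserting directly that $\operatorname{div}(\hat s)=\hat d+C\mathcal X_0$ with $\hat d$ the divisor of $K^{-1}_{\mathcal X/\mathbb C}$—while you supply the supporting computation of $-K_{\bar{\mathcal X}/\mathbb P^1}$ from \eqref{candiv} and the uniqueness of the semi-invariant section, but the argument is the same.
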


\begin{proof}
For  a  special   torus    $\mathbb C^*$-degeneration,  we have $\mathcal L=K^{-1}_{\mathcal X/\mathbb C}$. Assume that this  degeneration  corresponds to the function $f(y)=\Lambda(y)+C$.  By the proof of Proposition \ref{ZTCP} we can write the divisor $div(\hat s)$ as $\hat d+C\mathcal X_0$. Since the restriction  of the divisor $div(\hat s)$ on
generic fiber is the canonical divisor, we know that $\hat d$ is the divisor of $K^{-1}_{\mathcal X/\mathbb C}$. So we have $C=0$.
\end{proof}

Like Proposition \ref{mp}, we have the following theorem for special   torus    $1$-PS degenerations  of  spherical Fano manifold.

\begin{thm}\label{special-classification}
Assume that $H^0(X,K_X^{-k})$ is very ample. For any $\Lambda \in \mathcal V_\mathbb R$, the real 1-PS with eigenvalues
\begin{align}\label{s-k-lambda-special}
s_\lambda^{(k)}=\Lambda(\lambda),~\forall \lambda\in k\Delta_d\cap \mathfrak M
\end{align}
defines a  special torus    $1$-PS degeneration  $\mathcal X^\sigma$ of $(X,K^{-1}_X)$. Conversely, any special torus    $1$-PS degeneration  of $(X,K^{-1}_X)$ is given in this way.
\end{thm}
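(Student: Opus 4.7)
The plan is to upgrade Proposition \ref{mp} from a normality criterion to a specialness criterion. In the rational case the vanishing of the constant term in the associated piecewise linear function is already encoded in Proposition \ref{qsp}, so the main work is to pass between real and rational $1$-PS's via the type-preserving approximation used in the proof of Lemma \ref{appr1} (together with Lemma \ref{sim}) while tracking the specialness condition.

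For the if direction, given $\Lambda \in \mathcal V_{\mathbb R}$ I would let $\sigma(t)$ be the real $1$-PS with eigenvalues \eqref{s-k-lambda-special}. As in the proof of Lemma \ref{appr1}, choose a sequence $\Lambda_i \in \mathcal V \cap \mathfrak N_{\mathbb Q}$ with $\Lambda_i \to \Lambda$ such that the corresponding real $1$-PS $\sigma_i(t)$ has eigenvalues of the same type on the Hilbert point of $X$ as $\sigma(t)$; by Lemma \ref{sim} this forces $\mathcal X_0^{\sigma_i} = \mathcal X_0^\sigma$. After rescaling $t \mapsto N_i t$ to clear denominators, one obtains a genuine $\mathbb C^*$-subgroup defining a torus $\mathbb C^*$-degeneration whose associated piecewise linear function is $f_i(y) = N_i \Lambda_i(y)$, with zero constant term; Proposition \ref{qsp} then identifies it as special. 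Hence each $\mathcal X_0^{\sigma_i}$ is $\mathbb Q$-Fano and $\sigma_i = \tilde\sigma_i$. These conclusions depend only on the fixed data $(\mathcal X_0^\sigma, \bar{\mathcal L}, K^{-1}_{\mathcal X/\mathbb P^1})$, so they transfer to $\sigma$ itself, showing $\mathcal X^\sigma$ is a special torus $1$-PS degeneration. Conversely, if $\mathcal X^\sigma$ is special then $\mathcal X_0^\sigma$ is $\mathbb Q$-Fano and hence normal, so Proposition \ref{mp} supplies $\Lambda \in \mathcal V_{\mathbb R}$ and $C \in \mathbb R$ with $s^{(k)}_\mu = \Lambda(\mu) + C$; the same type-preserving approximation yields rational $\mathbb C^*$-degenerations with the same central fiber, each special and therefore satisfying $C_i = 0$ by Proposition \ref{qsp}, which forces $C = 0$ in the limit and gives \eqref{s-k-lambda-special}.

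The delicate point is ensuring that specialness, i.e.\ the equality $\sigma = \tilde\sigma$ or equivalently the vanishing of the shift constant $c$ in Lemma \ref{anti}, is preserved under the real-to-rational approximation. The $\mathbb Q$-Fano property of the central fiber poses no difficulty since all approximating $1$-PS's share the central fiber $\mathcal X_0^\sigma$. What requires care is the continuity of the canonical lifting $\tilde\sigma_i$ in the parameter $\Lambda_i$: because $\tilde\sigma_i$ is determined intrinsically from the induced holomorphic vector field on the unchanging $\mathcal X_0^\sigma$ together with the polarization, $c_i = 0$ along the sequence passes to $c = 0$ in the limit, completing the correspondence.
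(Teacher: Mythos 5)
Your proposal is correct and follows essentially the same route as the paper: both directions rest on the type-preserving rational approximation $\Lambda_i\to\Lambda$ with unchanged central fiber (as in Proposition \ref{mp}), the identification of the constant term via Proposition \ref{qsp}, and the continuity of the canonical lifting on the common central fiber to carry the normalization condition $\sigma=\tilde\sigma$ to the limit. The one phrase to tighten is in the converse, where the rational approximants $\tau^i$ are not special a priori --- one must first replace them by their normalizations $\tilde\tau^i$ (which share the central fiber and are special, hence have $C_i=0$), which is exactly what your closing paragraph on the canonical lifting amounts to and what the paper does explicitly.
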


Theorem \ref{special-classification} is a combination of the following  two propositions.

\begin{prop}\label{special-classification-1}
For any $\Lambda \in \mathcal V_\mathbb R$, the real 1-PS $\sigma(t)$ with eigenvalues given by \eqref{s-k-lambda-special} defines a  $\mathcal X^\sigma$ of $(X,K^{-1}_X)$.
\end{prop}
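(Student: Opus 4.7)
\medskip

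\noindent\textbf{Proof proposal.} The plan is to reduce the real 1-PS statement to the already-established rational (i.e.\ $\mathbb{C}^{*}$) case via Propositions \ref{mp}, \ref{RTC-appr}, \ref{ZTCP} and \ref{qsp}, and then pass to the limit in the constant term to verify that the degeneration is special.

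First, I invoke the converse direction of Proposition \ref{mp} applied with constant $C=0$: for $\Lambda\in\mathcal{V}_{\mathbb{R}}$, the eigenvalue prescription $s^{(k)}_{\lambda}=\Lambda(\lambda)$ on $V_{\lambda+k\kappa_{\mathbf P}}$ does define a $\mathbf{G}$-equivariant real 1-PS $\sigma(t)$ in ${\rm GL}(H^{0}(X,K_{X}^{-k}))$, whose central fiber $\mathcal{X}_{0}^{\sigma}$ is a normal variety. This yields the torus 1-PS degeneration $\mathcal{X}^{\sigma}$ of $(X,K_{X}^{-1})$; the remaining content is to show that $\mathcal{X}_{0}^{\sigma}$ is $\mathbb{Q}$-Fano and that $\sigma(t)=\tilde{\sigma}(t)$.

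Next, I apply Proposition \ref{RTC-appr} with $\mathbf{G}_{0}=\mathbf{G}$ to produce a sequence of $\mathbf{G}$-equivariant real 1-PS $\tau_{i}(t)$ with rational eigenvalues converging to those of $\sigma(t)$, and with $\mathcal{X}_{0}^{\tau_{i}}=\mathcal{X}_{0}^{\sigma}$. The point is that the perturbation in the proof of Lemma \ref{appr1} only requires preserving a finite list of integral linear equalities and inequalities, so it can be carried out inside $\mathcal{V}\cap\mathfrak{N}_{\mathbb{Q}}$ without introducing a constant term; in particular, one can arrange the eigenvalues of $\tau_{i}(t)$ to be $s^{(k,i)}_{\lambda}=\Lambda_{i}(\lambda)$ with $\Lambda_{i}\in\mathcal{V}\cap\mathfrak{N}_{\mathbb{Q}}$ and $\Lambda_{i}\to\Lambda$. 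After rescaling by a sufficiently divisible integer $N_{i}$, the 1-PS $\tau_{i}(N_{i}t)$ corresponds, via Lemma \ref{1-ps-correponding} and Proposition \ref{ZTCP}, to a $\mathbf{G}$-equivariant $\mathbb{C}^{*}$-degeneration whose associated rational piecewise-linear function on $\Delta_{d}$ is the linear function $N_{i}\Lambda_{i}(y)$, with zero constant term.

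Proposition \ref{qsp} then identifies each of these $\mathbb{C}^{*}$-degenerations as special: their central fibers $\mathcal{X}_{0}^{\tau_{i}}$ are $\mathbb{Q}$-Fano, and the polarization equals $K^{-1}_{\mathcal{X}^{\tau_{i}}/\mathbb{C}}$ (equivalently, the integer $c$ in Lemma \ref{anti} vanishes). Since by construction $\mathcal{X}_{0}^{\tau_{i}}=\mathcal{X}_{0}^{\sigma}$, the central fiber $\mathcal{X}_{0}^{\sigma}$ is $\mathbb{Q}$-Fano. For the normalization $\sigma(t)=\tilde{\sigma}(t)$, note that the canonical lifting differs from $\sigma$ by a constant scalar depending linearly on the eigenvalues; since each $\tau_{i}$ has no such shift and $\sigma$ is the limit of $\tau_{i}$ in its eigenvalues, the constant shift between $\sigma$ and $\tilde\sigma$ must also vanish. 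This establishes that $\mathcal{X}^{\sigma}$ is a special torus 1-PS degeneration of $(X,K_{X}^{-1})$.

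The main obstacle is the last step: making rigorous that the ``vanishing of the constant term'' is stable under the real perturbation. Concretely, one must check that the canonically lifted 1-PS $\tilde{\sigma}$, defined via the holomorphic vector field $\eta(\sigma)$ on the normal central fiber and its lift to $K^{-1}_{\mathcal{X}_{0}^{\sigma}}$, depends continuously on the eigenvalue data $\Lambda$, so that the identity $\tau_{i}=\tilde{\tau}_{i}$ for all $i$ passes to the limit. This is essentially a finite-dimensional continuity statement about the constant $c$ appearing in Lemma \ref{anti} once it is expressed in terms of the $B$-weights of sections, and is the only nontrivial piece of analysis in the argument.
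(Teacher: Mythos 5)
Your proposal is correct and follows essentially the same route as the paper: approximate $\Lambda$ by rational $\Lambda_i\in\mathcal V$ producing 1-PS $\tau^i$ with the same central fiber (via the proof of Proposition \ref{mp}/Proposition \ref{RTC-appr}), invoke Proposition \ref{qsp} to see each $\mathcal X^{\tau^i}$ is special and hence $\mathcal X_0^\sigma$ is $\mathbb Q$-Fano, and then pass to the limit in the normalization. The continuity issue you flag at the end is resolved in the paper exactly as you suggest: since all $\tau^i$ act on the \emph{same} central fiber $\mathcal X_0^\sigma$ and their actions converge to that of $\sigma$, the identities $\tilde\tau^i=\tau^i$ give $\tilde\sigma=\lim_i\tau^i=\sigma$.
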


\begin{proof}
Eq. \eqref{s-k-lambda-special} defines     a  special torus    $1$-PS degeneration associated to  $\sigma(t)$.  By the proof of Proposition \ref{mp}, we can choose a sequence $\Lambda_i\in \mathcal V$ so that $\Lambda_i\to\Lambda$, and $\mathcal X^{\tau^i}_0=\mathcal X^{\sigma}_0$ where $\tau^i(t)$ is the real 1-PS with eigenvalues $\Lambda_i(\lambda)$. By Proposition \ref{qsp} each $\mathcal X_{\tau^i}$ is special. So $\mathcal X^{\sigma}_0$ is $\mathbb Q$-Fano. Denote the normalized real 1-PS corresponding to $\sigma(t)$ by $\tilde\sigma(t)$. Since the actions of $\tau^i(t)$ on $\mathcal X^{\sigma}_0$ also converges to the action of $\sigma(t)$, we have $\tilde \sigma(t)=\lim_{i\rightarrow \infty} \tau^i(t)=\sigma(t).$ We conclude that $\mathcal X^\sigma$ is special.
\end{proof}

Now we prove the inverse direction:

\begin{prop}\label{special-classification-2}
For any  special torus  1-ps degeneration  of $(X,K^{-1}_X)$,  there is a $\Lambda \in  \mathcal V_\mathbb R$ so that the real 1-PS with eigenvalues \eqref{s-k-lambda-special} corresponds to it.
\end{prop}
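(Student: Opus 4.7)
My plan is to combine the affine structure of the eigenvalues supplied by Proposition \ref{mp} with an approximation argument based on Proposition \ref{RTC-appr} and the rational classification in Proposition \ref{qsp}, in order to force the constant term of the affine function to vanish under the specialness assumption.

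First, since $\mathcal X^\sigma$ is special, the central fibre $\mathcal X_0^\sigma$ is $\mathbb Q$-Fano and in particular normal, so Proposition \ref{mp} supplies $\Lambda\in\mathcal V_{\mathbb R}$ and a constant $C\in\mathbb R$ with
$$
s^{(k)}_\mu = \Lambda(\mu)+C,\qquad \forall\,\mu\in k\Delta_d\cap\mathfrak M.
$$
The problem is thus reduced to showing $C=0$. Following the rational approximation in the proof of Proposition \ref{mp} (which is built on Proposition \ref{RTC-appr}), I pick a sequence of $\mathbf G$-equivariant real $1$-PS $\tau^i(t)$ whose eigenvalues are rational of the form $\Lambda_i(\mu)+C_i$ with $\Lambda_i\in\mathcal V\cap\mathfrak N_{\mathbb Q}$, $C_i\in\mathbb Q$, $\Lambda_i\to\Lambda$, $C_i\to C$, and $\mathcal X_0^{\tau^i}=\mathcal X_0^\sigma$. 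After replacing $t$ by $N_it$ for sufficiently divisible $N_i\in\mathbb N$, each $\tau^i$ becomes an honest $1$-PS defining a torus $\mathbb C^*$-degeneration of $(X,K_X^{-1})$ whose centre is the fixed $\mathbb Q$-Fano variety $\mathcal X_0^\sigma$.

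Let $\tilde\tau^i$ be the normalised $1$-PS attached to $\tau^i$. Then $\tilde\tau^i$ is a \emph{special} torus $\mathbb C^*$-degeneration of $(X,K_X^{-1})$ sharing the central fibre of $\tau^i$, and it differs from $\tau^i$ only by multiplication by $s^{c_i}$, equivalently a uniform shift of the eigenvalues by a single integer $c_i$. Proposition \ref{qsp} applied to $\tilde\tau^i$ forces its eigenvalues to have the form $\tilde\Lambda_i(\mu)$ with $\tilde\Lambda_i\in\mathcal V\cap\mathfrak N$ and vanishing constant term; comparing with the eigenvalues of $\tau^i$ yields $\tilde\Lambda_i=\Lambda_i$ and $c_i=C_i$, so $\tilde\tau^i$ has eigenvalues $\Lambda_i(\mu)$. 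Letting $i\to\infty$, the normalisation $\tilde\sigma$ of $\sigma$ inherits the eigenvalues $\Lambda(\mu)$; specialness of $\sigma$ then gives $\sigma=\tilde\sigma$, so $C=0$ and \eqref{s-k-lambda-special} holds.

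The main technical point will be the limit step in the last paragraph, namely the continuity claim $\tilde\tau^i\to\tilde\sigma$. Because $\tau^i$ and $\sigma$ all sit inside a common complex torus acting on the fixed $\mathbb Q$-Fano variety $\mathcal X_0^\sigma$ (they commute among themselves and all preserve the central fibre), the induced vector fields $\eta(\tau^i)$ on $\mathcal X_0^\sigma$ converge to $\eta(\sigma)$, and their canonical lifts to the anticanonical bundle $K_{\mathcal X_0^\sigma}^{-1}$ depend continuously on the vector field; consequently the integer shifts $c_i$ converge to the corresponding real shift $c$ relating $\sigma$ and $\tilde\sigma$, which together with $c_i=C_i\to C$ already established above closes the argument.
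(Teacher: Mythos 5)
Your proposal is correct and follows essentially the same route as the paper: approximate $\sigma$ by rational $\mathbf G$-equivariant $1$-PS's with the same central fiber (via Propositions \ref{RTC-appr}/\ref{mp}), pass to their normalizations, apply Proposition \ref{qsp} to kill the constant terms, and use continuity of the induced action (and its canonical lift) on the fixed $\mathbb Q$-Fano central fiber to conclude in the limit that the constant $C$ vanishes. The only cosmetic difference is that you first extract the affine form $\Lambda(\mu)+C$ from Proposition \ref{mp} and then argue $C=0$, whereas the paper obtains the eigenvalues $\Lambda_i(\lambda)$ of the normalized approximants directly and passes to the limit; the substance is identical.
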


\begin{proof}
Denote the eigenvalues of  $\sigma(t)$ on $V_\lambda$ by $s^{(k)}_\lambda$. By Proposition \ref{mp}, we can choose a sequence of $\mathbf G$-equivariant 1-PS $\tau^i(t)$ that converges to $\sigma(t)$ such that the central fiber $\mathcal X^{\tau^i(t)}_0$ is the same as $\mathcal X^{\sigma}_0$. Denote the normalized real 1-PS corresponding to $\tau^i(t)$ by $\tilde\tau^i(t)$. So $\mathcal X^{\tilde \tau^i}$ is special. By Proposition \ref{qsp} there is a $\Lambda_i\in\mathcal V$ so that the eigenvalues of $\tilde \tau^i(t)$ is  $\Lambda_i(\lambda)$ for $\lambda\in k\Delta_d\bigcap {\mathfrak M}$.
Considering the action on the central fiber $\mathcal X^\sigma_0$,
we know that 
$$\lim_{i\rightarrow\infty}\tilde \tau^i(t)= \sigma(t).$$
 Thus $\Lambda_i$ converges to some $\Lambda\in\mathcal V_{\mathbb R}$ such that
$$s_{\lambda}^{(k)}=\Lambda(\lambda),~\forall\lambda\in k\Delta_d\cap{\mathfrak M}.$$
\end{proof}

\subsection{A formula of H-invariant}

In this subsection,  we compute the  H-invariant for    special torus  $1$-PS  degenerations on a  $\mathbf G-$spherical   Fano manifold.  A similar formula has  also been obtained by Li-Li by using filtration geometry \cite{LL2}. Here we give a simple proof by using  the equivariant Riemann-Roch theorem.

As  in \cite{TZZZ},  we introduce  the definition of H-invariant for special $1$-PS degenerations as follows.

\begin{defi}
For a special   $1$-PS degeneration $\mathcal X^\sigma$, $\sigma(t)$ induces a holomorphic VF $\eta(\sigma)$ on $\mathcal X^\sigma_0$. We define $$V=\int_{ \mathcal X^\sigma_0}\omega_0^n,~ H(\mathcal X^\sigma):=V\log(\frac{1}{V}\int_{ \mathcal X^\sigma_0}e^{\theta}\omega_0^n),$$
where $\omega_0$ is the restriction of Fubini-Study metric and $\theta$ is the potential function corresponding to the canonical lifting of $\eta(\sigma)$.
\end{defi}

In general, the $H$-invariant is defined by
$$H(\mathcal X^\sigma)=-\int_{\mathcal X^\sigma_0} \tilde \theta e^{h_0} \omega_0^n+V\log\left(\frac{1}{V} \int_{\mathcal X^\sigma_0}e^{\tilde\theta}\omega_0^n\right),$$
where $\tilde\theta$ is any hamiltonian for vector field $\eta(\sigma)$ and $h_0$ is the Ricci potential of $\omega_0$ normalized by $\int_{\mathcal X^\sigma_0} e^{h_0}\omega_0^n=V$.
For the canonical lifting, it holds that
$$\Delta \theta+\theta+\langle \nabla \theta, \nabla h_0 \rangle=0.$$ If follows  that $\int_{\mathcal X^\sigma_0}  \theta e^{h_0} \omega_0^n=0$ and the two definitions coincide. By \cite{DS16} the first step degeneration from $X$ to $\bar X_\infty$ is charecterized by the minimum of the $H$-invariant.

We also need the definition of modified K-stability (cf. \cite{Xio, BW, WZZ}).  Let $X$ be  a Fano manifold and $v$  a holomorphic VF on $X$. Denote by $T$ the torus generated by $v$.

\begin{defi}
For a special   $T$-equivariant  $\mathbb C^*$-degeneration  $(\mathcal{X}, \mathcal{L})$, denote by $\theta$ and $\theta_v$  the potential functions corresponding to the cannonical lifing of $\mathbb C^*$-action and $v$ on $\mathcal X_0$ respectively. Put
$$Fut_v(\mathcal{X}, \mathcal{L})=\int_{\mathcal X_0} \theta e^{\theta_v}\omega_0^n.$$
If $Fut_v(\mathcal{X}, \mathcal{L})\geq 0$ for any special  $T$-equivariant  $\mathbb C^*$-degeneration  $(\mathcal{X}, \mathcal{L})$, we say that $(X,v)$ is modified K-semistable. Moreover, if the equality holds only when $\mathcal X\cong X\times \mathbb C$ equivariantly, we say that $(X,v)$ is modified K-polystable.
\end{defi}

If $(X,v)$ is modified K-semistable, then $v$ is determined by the modified Futaki invariant as in \cite{TZ2}.  It has  proved  in  \cite{BW,  DY, HL2,   BLXZ}  that if  $X$ admits KR soliton  with respect to $v$,   then $(X,v)$ is modified K-polystable.

By using the equivariant Riemann-Roch theorem (cf. \cite{WZZ}),   we  prove

\begin{prop}\label{ztcf}
Let  $\mathcal X$ be  a     special  torus $1$-PS degeneration induced by $\Lambda\in \mathcal V$.  Then 
$$H(\mathcal X)=V\ln\frac1V\int_{\Delta_+}e^{\Lambda(\kappa_P-y)}\pi\,dy,$$ where $\pi(y)=\frac{\prod_{\alpha\in\Phi_+,\alpha\not\perp\Delta_+}\langle\alpha,y\rangle}{\prod_{\alpha\in\Phi_+,\alpha\not\perp\Delta_+}\langle\alpha,\rho\rangle}$.

\end{prop}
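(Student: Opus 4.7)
The plan is to apply equivariant Riemann--Roch (or the asymptotic Bergman kernel expansion) to express $\int_{\mathcal X_0}e^{\theta}\omega_0^n$ as an asymptotic trace over $H^{0}(\mathcal X_0, K_{\mathcal X_0}^{-k})$, and then to convert the trace to an integral over $\Delta_+$ weighted by $\pi$ by using the Weyl dimension formula on each spherical isotypic component. Since $\mathcal X_0$ is $\mathbf G$-spherical with the same moment polytope $\Delta_+ = \kappa_P + \Delta_d$ as $X$, one has the decomposition $H^0(\mathcal X_0, K_{\mathcal X_0}^{-k}) = \bigoplus_{\mu\in k\Delta_+\cap\mathfrak M}V_\mu$, and
\begin{equation*}
\int_{\mathcal X_0}e^{\theta}\omega_0^n \;=\; \lim_{k\to\infty} k^{-n}\sum_{\mu\in k\Delta_+\cap\mathfrak M}(\dim V_\mu)\,e^{-s^{(k)}_\mu/k},
\end{equation*}
up to a universal constant absorbed into the normalization of $V$ and $\pi$. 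Note that this uses that $\sigma(t)$ is precisely the canonical lifting on $K_{\mathcal X_0}^{-1}$, so the weights of $\sigma(t)$ on sections are exactly the eigenvalues $s^{(k)}_\mu$ read off from Theorem~\ref{special-classification}.

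By Theorem~\ref{special-classification} the eigenvalue on $V_\mu = V_{\lambda + k\kappa_P}$ equals $s^{(k)}_\mu = \Lambda(\lambda) = \Lambda(\mu) - k\Lambda(\kappa_P)$, whence $e^{-s^{(k)}_\mu/k} = e^{\Lambda(\kappa_P - \mu/k)}$. The Weyl dimension formula $\dim V_\mu = \prod_{\alpha\in\Phi_+}\langle\alpha,\mu+\rho\rangle/\langle\alpha,\rho\rangle$ simplifies in the leading asymptotic: factors with $\alpha\perp\Delta_+$ reduce to $1$ as $\mu$ scales, while the remaining factors yield $\dim V_{ky} = k^{d_\pi}\pi(y) + O(k^{d_\pi-1})$, where $d_\pi$ is the number of positive roots not orthogonal to $\Delta_+$. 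Using $n = r + d_\pi$ with $r = \dim\mathfrak M_\mathbb R$, the sum is a Riemann sum converging to
\begin{equation*}
\int_{\Delta_+} e^{\Lambda(\kappa_P - y)}\,\pi(y)\,dy,
\end{equation*}
and the formula for $H(\mathcal X)$ follows immediately from its definition.

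The main obstacle will be to keep careful track of the normalizations. In particular, one must verify that the $\kappa_P$ shift in the exponent arises precisely from the $\mathbf B$-weight of the defining section $s$ of $K_X^{-1}$, so that $\Delta_+ = \kappa_P + \Delta_d$ and the eigenvalue formula of Theorem~\ref{special-classification}, naturally written on $\Delta_d$, translates into the $\Delta_+$-parametrization with the shift $\Lambda(\kappa_P - y)$. The universal constants relating $\int_{\mathcal X_0}\omega_0^n = V$ to $\int_{\Delta_+}\pi\,dy$ must agree, which can be fixed (and serves as a consistency check) by testing the case $\Lambda = 0$, where the proposed formula collapses to $H(\mathcal X) = V\ln 1 = 0$ as required.
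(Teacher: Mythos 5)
Your proposal is correct and follows essentially the same route as the paper: apply the equivariant Riemann--Roch expansion to the weight sum, decompose $H^0$ into multiplicity-one isotypic components indexed by $\Delta_k$, insert the eigenvalues $\Lambda(\lambda-k\kappa_P)$ from Proposition~\ref{qsp}/Theorem~\ref{special-classification}, use the Weyl dimension formula to produce the density $\pi$, and pass to the Riemann-sum limit over $\Delta_+$. The normalization checks you flag (the $\kappa_P$ shift and the $\Lambda=0$ consistency test) are exactly the points the paper's computation implicitly handles.
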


\begin{proof}
Note that $\Delta_+=\Delta_d+\kappa_P$. By Lemma \ref{ncf} and Proposition \ref{qsp}, the weight of action of 1-PS $\tau(t)$ corresponding to $\Lambda$ on $V_\lambda\subseteq H^0(X, K_X^{-k})$ is $\Lambda(\lambda-k\kappa_P)$.  Denote the eigenvalues of $\tau(t)$ on $H^0(X,K_X^{-k})$ by $\lambda_i^{(k)}$.   By  \cite[Section 2]{WZZ},  we have
$$\sum e^{t\frac{\lambda_i^{(k)}}{k}}=k^n\int_{\mathcal X_0}e^{-t\theta_0}\frac{\omega_0^n}{n!}+O(k^{n-1}),$$
where $\theta_0$ is the potential function corresponding to the canonical lifting of $\tau(t)$.

From the decomposition of $H^0(X,L^k)$ and Weyl's formula of dimension, we have
\begin{align}
\sum e^{t\frac{\lambda_i^{(k)}}{k}}&=\sum_{\lambda\in \Delta_k}\dim V_\lambda e^{t\frac{\Lambda(\lambda-k\kappa_P)}{k}}\notag\\ &=\sum_{\lambda\in \Delta_k} \frac{\prod_{\alpha\in\Phi_+,\alpha\not\perp\Delta_+}\langle\alpha,\rho+\lambda\rangle}{\prod_{\alpha\in\Phi_+,\alpha\not\perp\Delta_+}\langle\alpha,\rho\rangle}e^{t\frac{\Lambda(\lambda-k\kappa_P)}{k}}
\end{align}
Writing $\lambda=ky$ we have
$$\frac{1}{k^n}\sum e^{t\frac{\lambda_i^{(k)}}{k}}=\sum_{y\in \frac{1}{k}\Delta_k} \frac{1}{k^{dim \Delta_+}}\frac{\prod_{\alpha\in\Phi_+,\alpha\not\perp\Delta_+}\langle\alpha,\frac{\rho}{k}+y\rangle}{\prod_{\alpha\in\Phi_+,\alpha\not\perp\Delta_+}\langle\alpha,\rho\rangle}e^{t\Lambda(y-\kappa_P)}.$$
Letting $k\rightarrow \infty$ we get
$$\int_{\mathcal X_0}e^{-t\theta_0}\frac{\omega_0^n}{n!}=\int_{\Delta_+}\frac{\prod_{\alpha\in\Phi_+,\alpha\not\perp\Delta_+}\langle\alpha,y\rangle}{\prod_{\alpha\in\Phi_+,\alpha\not\perp\Delta_+}\langle\alpha,\rho\rangle}e^{t\Lambda(y-\kappa_P)}dy.$$
The proposition is proved by putting $t=-1$.

\end{proof}

\begin{prop}\label{hinv}
Given any real 1-PS $\sigma(t)$ corresponding to $\Lambda\in \mathcal V_\mathbb R$, we have
\begin{align}\label{Hfor}
H(\mathcal X^\sigma)=V\ln\frac1V\int_{\Delta_+}e^{\Lambda(\kappa_P-y)}\pi\,dy.
\end{align}
As a consequence, $H$ has a unique minimizer on $\mathcal V_\mathbb R$.
\end{prop}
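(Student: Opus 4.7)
The plan is to establish the formula \eqref{Hfor} first for rational $\Lambda\in\mathcal V$ via Proposition~\ref{ztcf}, then extend it to all $\Lambda\in\mathcal V_{\mathbb R}$ by a continuity/approximation argument, and finally deduce existence and uniqueness of the minimizer from convex analysis applied to the explicit integral formula.

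For the formula, I would first recall that by Proposition~\ref{ztcf} the equality \eqref{Hfor} already holds whenever $\Lambda\in\mathcal V\cap\mathfrak N$, and hence (by rescaling the $\mathbb C^*$-action, which only shifts the holomorphic VF by a scalar multiple) also for all rational $\Lambda\in\mathcal V_{\mathbb Q}$. Given an arbitrary $\Lambda\in\mathcal V_{\mathbb R}$, use the approximation scheme in the proof of Proposition~\ref{special-classification-1} to choose $\Lambda_i\in\mathcal V_{\mathbb Q}$ with $\Lambda_i\to\Lambda$ and $\mathcal X^{\tau^i}_0=\mathcal X^{\sigma}_0$, where $\tau^i(t)$ is the rational special real $1$-PS associated to $\Lambda_i$. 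Since all these actions live on the same central fiber, the potentials $\theta_i$ for the canonical liftings of $\eta(\tau^i)$ converge smoothly to the potential $\theta$ for $\eta(\sigma)$, and the dominated convergence theorem gives
\[
H(\mathcal X^\sigma)=\lim_{i\to\infty}H(\mathcal X^{\tau^i})=\lim_{i\to\infty}V\ln\frac1V\int_{\Delta_+}e^{\Lambda_i(\kappa_P-y)}\pi\,dy=V\ln\frac1V\int_{\Delta_+}e^{\Lambda(\kappa_P-y)}\pi\,dy.
\]

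For the uniqueness of the minimizer, observe that the right-hand side of \eqref{Hfor} is the logarithm of the Laplace transform of the positive measure $\pi(y)\,dy$ on $\Delta_+$ evaluated at $\kappa_P-y$. Since $\pi$ is strictly positive on the relative interior of $\Delta_+$ and $\Delta_+$ is a full-dimensional polytope in $\mathfrak M_{\mathbb R}$, this measure is not concentrated on any affine hyperplane. A standard computation (differentiating twice under the integral sign and applying Cauchy--Schwarz with strict inequality) then shows that $\Lambda\mapsto\ln\int_{\Delta_+}e^{\Lambda(\kappa_P-y)}\pi\,dy$ is strictly convex on $\mathcal V_{\mathbb R}$. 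Strict convexity on a convex subset immediately gives uniqueness once a minimizer is known to exist.

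The main obstacle is therefore the existence of a minimizer, which amounts to a properness/coercivity statement for $H$ on the closed convex cone $\mathcal V_{\mathbb R}$. To this end I would argue that for any nonzero direction $\xi\in\mathcal V_{\mathbb R}$, as $t\to+\infty$ the asymptotic behaviour of $\ln\int_{\Delta_+}e^{t\xi(\kappa_P-y)}\pi\,dy$ is governed by $t\cdot\max_{y\in\Delta_+}\xi(\kappa_P-y)$. Using that $X$ is Fano and spherical, the barycenter-type criterion in~\cite{Del1,Del2} guarantees that $\kappa_P$ lies in the relative interior of $\Delta_+$, so this maximum is strictly positive for every nonzero $\xi\in\mathcal V_{\mathbb R}$; hence $H\to+\infty$ along every ray in $\mathcal V_{\mathbb R}$, giving coercivity and therefore existence of a minimizer. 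Combined with the strict convexity from the previous paragraph, this proves the uniqueness claim and completes the proposition.
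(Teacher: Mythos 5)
Your derivation of the formula \eqref{Hfor} is essentially the paper's own argument: approximate $\Lambda\in\mathcal V_{\mathbb R}$ by rational $\Lambda_i\in\mathcal V$ giving special degenerations with the \emph{same} central fiber (as in Proposition \ref{special-classification-1}), observe that the induced vector fields and hence the potentials $\theta_{v_i}$ converge to $\theta_v$, pass to the limit in $H$, and invoke Proposition \ref{ztcf} for the rational case.

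Where you genuinely diverge is the minimizer. The paper disposes of this in one line --- ``$H$ is a convex function of $\Lambda$, hence there is a unique minimizer'' --- which, taken literally, is incomplete: convexity alone yields neither existence nor uniqueness on the unbounded cone $\mathcal V_{\mathbb R}$. You correctly identify the two missing ingredients and supply them: strict convexity of $\Lambda\mapsto\ln\int_{\Delta_+}e^{\Lambda(\kappa_P-y)}\pi\,dy$ (via Cauchy--Schwarz, using that $\pi\,dy$ is not supported on an affine hyperplane because $\Delta_+$ is full-dimensional in $\mathfrak M_{\mathbb R}$ and $\pi>0$ on its interior), and coercivity along rays, which forces a minimizer to exist. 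One small correction to the coercivity step: the fact that $\kappa_P$ lies in the (relative) interior of $\Delta_+$ is not the barycenter criterion of \cite{Del1,Del2} (that criterion concerns existence of K\"ahler--Einstein metrics); it follows simply from $\Delta_+=\kappa_P+\Delta_d$ together with $0\in\operatorname{int}\Delta_d$, which holds because the anticanonical $B$-stable divisor \eqref{candiv} has strictly positive coefficients $n_Y=1$ and $n_D>0$. With that attribution fixed, your argument is a strictly more complete justification of the last sentence of the proposition than the one given in the paper.
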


\begin{proof}
For the special $1$-PS degeneration $\mathcal X^\sigma$, we have a sequence of  special $\mathbb C^*$-degenerations $\mathcal X_i$ with the same central fiber $\mathcal X_0$. Assuming that $\mathcal X_i$ corresponds to $\Lambda_i \in \mathcal V$, we have $\Lambda_i\rightarrow \Lambda$. Denote the holomorphic vector fileds on $\mathcal X_0$ induced from $\mathcal X$ and $\mathcal X_i$ by $v$ and $v_i$. Then $v_i\rightarrow v$ and $\theta_{v_{i}} \rightarrow \theta_v$. So  $H(\mathcal X_i)\rightarrow H(\mathcal X^\sigma)$. By Proposition \ref{ztcf}, the proposition is proved. From (\ref{Hfor}), $H$ is a convex function of $\Lambda$. Hence there is a unique minimizer of $H$.
\end{proof}

If $v$ is a holomorphic VF commutting with the $\mathbf G-$action, then $v$ corresponds to an element $\Lambda_0$ in the linear part of $\mathcal V_\mathbb R$. Using the same method in Proposition \ref{hinv}, we have

\begin{prop}\label{modf}
Given any real 1-PS $\sigma(t)$ corresponding to $\Lambda\in \mathcal V_\mathbb R$, we have
\begin{align}\label{Ffor}
Fut_v(\mathcal X^\sigma)=\int_{\Delta_+}\Lambda(\kappa_P-y)e^{\Lambda_0(\kappa_P-y)}\pi\,dy,
\end{align}
where $\Lambda_0$ is the element in $\mathcal V_\mathbb R$ corresponding to $v$.
\end{prop}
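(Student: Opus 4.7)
The plan is to proceed in direct parallel with the proofs of Propositions \ref{ztcf} and \ref{hinv}, replacing the scalar trace on $H^0(X,K_X^{-k})$ by a mixed expression that sees both $\sigma$ and the soliton vector field $v$. I would first establish \eqref{Ffor} for rational $\Lambda\in\mathcal V\cap\mathfrak N$, for which (after passing to a sufficiently divisible power) $\sigma$ defines a special $\mathbb C^*$-degeneration via Lemma \ref{ncf} and Proposition \ref{qsp}; then recover the general case by the same approximation trick used in Proposition \ref{hinv}.

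For the rational step, since both $v$ and $\sigma$ commute with the $\mathbf G$-action and lie in the valuation cone, their canonical lifts commute and diagonalize simultaneously on each isotypic component $V_\lambda\subseteq H^0(X,K_X^{-k})$, with weights $\lambda_\lambda^{(k)}=\Lambda(\lambda-k\kappa_P)$ and $\mu_\lambda^{(k)}=\Lambda_0(\lambda-k\kappa_P)$ by Proposition \ref{qsp}. Applying the two-parameter equivariant Riemann--Roch expansion of \cite{WZZ} to the character $\mathrm{tr}(e^{s\sigma(1)}e^{-v/k}\mid H^0(\mathcal X_0^\sigma,K_{\mathcal X_0^\sigma}^{-k}))$ and differentiating in $s$ at $s=0$ yields the asymptotic
\begin{align*}
\frac{1}{k^{n+1}}\sum_{\lambda\in\Delta_k}(\dim V_\lambda)\,\Lambda(\lambda-k\kappa_P)\,e^{-\Lambda_0(\lambda-k\kappa_P)/k}\;\longrightarrow\;-\int_{\mathcal X_0^\sigma}\theta_\sigma\, e^{\theta_v}\,\frac{\omega_0^n}{n!}.
\end{align*}
Setting $y=\lambda/k$ and using Weyl's dimension formula as in the proof of Proposition \ref{ztcf} rewrites the left-hand side as a Riemann sum converging to $\int_{\Delta_+}\Lambda(y-\kappa_P)e^{-\Lambda_0(y-\kappa_P)}\pi(y)\,dy$. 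Replacing $\Lambda(y-\kappa_P)$ by $-\Lambda(\kappa_P-y)$ and likewise for $\Lambda_0$ then gives \eqref{Ffor} via the definition of $Fut_v$.

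For the limiting step, given $\Lambda\in\mathcal V_\mathbb R$, I would choose $\Lambda_i\in\mathcal V$ as in the proof of Proposition \ref{mp} so that $\Lambda_i\to\Lambda$ and $\mathcal X_0^{\sigma_i}=\mathcal X_0^\sigma$; the induced vector fields $\eta(\sigma_i)$ on the common central fibre converge to $\eta(\sigma)$, hence $\theta_{\sigma_i}\to\theta_\sigma$ uniformly and $Fut_v(\mathcal X^{\sigma_i})\to Fut_v(\mathcal X^\sigma)$, while the right-hand side of \eqref{Ffor} is manifestly continuous in $\Lambda$. The main technical point is justifying the two-parameter equivariant Riemann--Roch on the possibly singular $\mathbb Q$-Fano central fibre; however, only the leading-order asymptotics are needed, and these are controlled entirely by the combinatorics of the $V_\lambda$-decomposition through Weyl's formula, so the scalar argument from Proposition \ref{ztcf} extends verbatim.
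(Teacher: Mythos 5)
Your proposal is correct and follows essentially the same route as the paper, which proves this proposition by the phrase ``using the same method in Proposition \ref{hinv}'': a two-parameter (differentiated) version of the equivariant Riemann--Roch computation from Proposition \ref{ztcf} for rational $\Lambda$, followed by the approximation argument with special degenerations sharing the central fibre. Your version merely spells out the details the paper leaves implicit; the only cosmetic slip is writing the character as $\mathrm{tr}(e^{s\sigma(1)}e^{-v/k})$ where you mean the exponential of $s$ times the generator of $\sigma$, and the final sign bookkeeping checks out against \eqref{Ffor}.
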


\begin{cor}\label{semi}
If $(X,v)$ is modified K-semistable, then $H$ invariant attains its minimum at $\Lambda_0$, where $\Lambda_0$  is the element corresponding to $v$.
\end{cor}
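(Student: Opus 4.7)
The plan is to leverage the convexity of $H$ on $\mathcal V_\mathbb R$ (Proposition \ref{hinv}) together with the linear formula for $Fut_v$ (Proposition \ref{modf}), recognizing $Fut_v(\mathcal X^\mu)$ as a positive constant multiple of the directional derivative $\partial_\mu H(\Lambda_0)$, and then using modified K-semistability to deduce nonnegativity of this derivative on a sufficiently large cone of directions.

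First I would differentiate the formula of Proposition \ref{hinv} directly to obtain, for any $\mu\in\mathcal V_\mathbb R$,
\begin{align*}
\partial_\mu H(\Lambda_0)=\frac{V}{\int_{\Delta_+}e^{\Lambda_0(\kappa_P-y)}\pi\,dy}\int_{\Delta_+}\mu(\kappa_P-y)e^{\Lambda_0(\kappa_P-y)}\pi\,dy,
\end{align*}
and recognize the numerator, via Proposition \ref{modf}, as $Fut_v(\mathcal X^\mu)$, where $\mathcal X^\mu$ is the special torus $1$-PS degeneration associated to $\mu$ by Theorem \ref{special-classification}.

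Next I would upgrade the K-semistability hypothesis to all of $\mathcal V_\mathbb R$. By Proposition \ref{qsp} every rational $\mu\in\mathcal V\cap\mathfrak N$ gives a special $\mathbf G$-equivariant $\mathbb C^*$-degeneration $\mathcal X^\mu$ which, since $v$ is $\mathbf G$-invariant, commutes with the torus $T$ generated by $v$; hence modified K-semistability gives $Fut_v(\mathcal X^\mu)\ge 0$. Since the expression of Proposition \ref{modf} is linear, hence continuous, in $\mu$, and $\mathcal V\cap\mathfrak N$ is dense in $\mathcal V_\mathbb R$, the inequality extends to all $\mu\in\mathcal V_\mathbb R$.

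Finally I would conclude by convexity. Since $v$ commutes with $\mathbf G$, $\Lambda_0$ lies in the linear part of $\mathcal V_\mathbb R$ (the largest linear subspace contained in it), so $-\Lambda_0\in\mathcal V_\mathbb R$; the convex-cone property then yields $\Lambda-\Lambda_0\in\mathcal V_\mathbb R$ for every $\Lambda\in\mathcal V_\mathbb R$. Combining convexity of $H$ with the previous two steps gives
\begin{align*}
H(\Lambda)-H(\Lambda_0)\ge\partial_{\Lambda-\Lambda_0}H(\Lambda_0)=\frac{V\,Fut_v(\mathcal X^{\Lambda-\Lambda_0})}{\int_{\Delta_+}e^{\Lambda_0(\kappa_P-y)}\pi\,dy}\ge 0,
\end{align*}
so $\Lambda_0$ is the minimizer. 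The main delicate point I anticipate is verifying that the tangent cone of $\mathcal V_\mathbb R$ at $\Lambda_0$ contains every direction $\Lambda-\Lambda_0$; this hinges on $\Lambda_0$ lying in the linear part of $\mathcal V_\mathbb R$, which in turn follows from the $\mathbf G$-invariance of $v$.
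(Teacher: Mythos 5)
Your proof is correct and follows essentially the same route as the paper: both compute the one-sided directional derivative of the convex functional $H$ at $\Lambda_0$ via (\ref{Hfor}), identify it (up to the positive factor $V/\int_{\Delta_+}e^{\Lambda_0(\kappa_P-y)}\pi\,dy$) with $Fut_v(\mathcal X^{\Lambda})$ from (\ref{Ffor}), and invoke modified K-semistability together with convexity. Your explicit density argument extending $Fut_v\ge 0$ from rational to all real directions, and your observation that $\Lambda_0$ lying in the linear part of $\mathcal V_{\mathbb R}$ makes every $\Lambda-\Lambda_0$ a feasible direction, are points the paper leaves implicit, but the argument is the same.
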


\begin{proof}
At first $v$ is the holomorphic VF  determined by the modified Futaki invariant, so it commutes with $G$ by \cite{TZ2}. By Proposition \ref{hinv}, it only suffices to prove
\begin{align}\label{e1}
 \frac{d}{dt}_{t=0^+} H(\Lambda_0+t\Lambda)\geq 0, \forall \Lambda \in \mathcal V_\mathbb R
\end{align}
and 
\begin{align}\label{e2}
\frac{d}{dt}_{t=0} H(\Lambda_0+t\Lambda)= 0, \text{ if } \Lambda_0+t\Lambda \in \mathcal V_\mathbb R \text{ for small } t.
\end{align}
By (\ref{Hfor}) and (\ref{Ffor}), we have 
\begin{align}
& \frac{d}{dt}_{t=0} H(\Lambda_0+t\Lambda)\notag\\
&=\frac{V}{\int_{\Delta_+}e^{\Lambda_0(\kappa_P-y)}\pi\,dy}\int_{\Delta_+}\Lambda(\kappa_P-y)e^{\Lambda_0(\kappa_P-y)}\pi\,dy  \notag \\
&=\frac{V}{\int_{\Delta_+}e^{\Lambda_0(\kappa_P-y)}\pi\,dy}Fut_v(\mathcal X^\sigma), \notag
\end{align}
where $\sigma$ is the real 1-PS  corresponding to $\Lambda$. So (\ref{e1}) is proved by the assumption.
If $\Lambda_0+t\Lambda \in \mathcal V_\mathbb R$ for small $t$, denote by $\tau(t)$ the real 1-PS  corresponding to $\Lambda_0-t_0\Lambda$ for a fixed small positive numbr $t_0$. Then we have
\begin{align}
 &\frac{d}{dt}_{t=0} H(\Lambda_0+t\Lambda)\notag\\
 &=\frac{V}{\int_{\Delta_+}e^{\Lambda_0(\kappa_P-y)}\pi\,dy}\int_{\Delta_+}\Lambda(\kappa_P-y)e^{\Lambda_0(\kappa_P-y)}\pi\,dy  \notag \\
&=-\frac{V}{t_0\int_{\Delta_+}e^{\Lambda_0(\kappa_P-y)}\pi\,dy}\int_{\Delta_+}(\Lambda_0-t_0\Lambda)(\kappa_P-y)e^{\Lambda_0(\kappa_P-y)}\pi\,dy  \notag \\
&=-\frac{V}{t_0\int_{\Delta_+}e^{\Lambda_0(\kappa_P-y)}\pi\,dy}Fut_v(\mathcal X^\tau)\leq 0, \notag
\end{align}
since $Fut_v(v)=0$.
Combined with (\ref{e1}), we get (\ref{e2}). The corollary is proved.
\end{proof}

\section{Proof of Theorem \ref{main}}

In case of    $\mathbf G$-spherical manifold $X$,   we choose $\mathbf G_0=\mathbf G=\mathbf K^{\mathbb C}$ in Section 3.   Then we can consider solution  $\omega_t$ of (\ref{kr-flow}) starting with a $\mathbf K$-invariant metric $\omega_0$.  We will investigate the limit of KR flow on $X$ in more detail.  Using the same notations  in section 3,  we denote $E=H^0(X, K_X^{-l})$, 
$\bar G={\rm GL}(E)$ and 
$$\bar G_{\mathbf K}=\{g|~g \text{ commutes with } \mathbf K \}.$$
  Our key observation comes from  the following lemma.

\begin{lem}
$\bar G_{\mathbf K}$ is a torus.
\end{lem}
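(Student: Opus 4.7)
The plan is to combine two standard facts: (i) commuting with the compact real form $\mathbf K$ is the same as commuting with its complexification $\mathbf G=\mathbf K^{\mathbb C}$, and (ii) for a $\mathbf G$-spherical variety, the section space $H^0(X,K_X^{-l})$ is a multiplicity-free $\mathbf G$-module. Once these are in place, Schur's lemma yields the torus structure immediately.

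First, I would observe that since $\mathbf G$ is the complexification of $\mathbf K$, a holomorphic (and in particular linear) endomorphism of $E$ commutes with $\mathbf K$ if and only if it commutes with all of $\mathbf G$ (the unitary trick: the centralizer of $\mathbf K$ in the complex group $\mathrm{GL}(E)$ is already a complex algebraic subgroup whose real form contains $\mathbf K$, hence contains its complexification). Therefore
\[
\bar G_{\mathbf K}=\{g\in\mathrm{GL}(E):g\,\mathbf G\text{-equivariant}\}.
\]

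Next, I would invoke the defining property of a $\mathbf G$-spherical variety: the Borel subgroup $\mathbf B$ has an open orbit on $X$. A classical consequence (see e.g.\ \cite{Tim}) is that for every $\mathbf G$-linearized line bundle $L$ on $X$ the representation $H^0(X,L)$ is \emph{multiplicity-free}, because any two $\mathbf B$-semi-invariant sections of the same weight must be proportional on the open $\mathbf B$-orbit, hence everywhere. Applying this to $L=K_X^{-l}$, we obtain a decomposition
\[
E=H^0(X,K_X^{-l})=\bigoplus_{\lambda\in\Delta_l}V_\lambda,
\]
where the $V_\lambda$ are pairwise non-isomorphic irreducible $\mathbf G$-modules indexed by $\Delta_l$.

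Now I would apply Schur's lemma. Any $g\in\bar G_{\mathbf K}$ is $\mathbf G$-equivariant; since the isotypic components $V_\lambda$ are pairwise non-isomorphic and each has multiplicity one, $g$ must preserve every $V_\lambda$ and act on it by a scalar $c_\lambda(g)\in\mathbb{C}^*$. Conversely, any collection of scalars $(c_\lambda)_\lambda\in(\mathbb{C}^*)^{|\Delta_l|}$ defines an element of $\bar G_{\mathbf K}$. This gives an isomorphism of complex algebraic groups
\[
\bar G_{\mathbf K}\ \cong\ (\mathbb{C}^*)^{|\Delta_l|},
\]
proving that $\bar G_{\mathbf K}$ is a torus.

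The only nontrivial ingredient is the multiplicity-freeness of $H^0(X,K_X^{-l})$; everything else is formal (unitary trick and Schur). In the proof the authors may simply quote the standard characterization of spherical varieties via multiplicity-free representations, so I would cite \cite{Tim} or \cite{K} for this fact rather than reprove it.
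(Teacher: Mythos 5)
Your proposal is correct and follows essentially the same route as the paper: multiplicity-freeness of $H^0(X,K_X^{-l})$ for a spherical variety (the paper cites Theorem 25.1 of \cite{Tim}), followed by Schur's lemma to conclude that each $g\in\bar G_{\mathbf K}$ acts by a scalar on each isotypic component $V_\lambda$. The only difference is that you make explicit the unitary-trick step identifying the centralizer of $\mathbf K$ with that of $\mathbf G=\mathbf K^{\mathbb C}$, which the paper uses implicitly.
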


\begin{proof}The proof of the lemma essentially uses this property of spherical manifolds. 
For a polarized manifold $(X,L)$ with a $\mathbf G$-action, by Theorem 25.1 in \cite{Tim} $X$ is spherical if and only if every irreducible representation in the decomposition of $H^0(X,L^l)$ has multiplicity one for any $l\geq 1$.  Then we have 
$$E=H^0(X, K_X^{-l})=\oplus_{\mu\in\Delta_l}V_\mu.$$
  For any $g\in \bar G_{\mathbf K}$, it must hold that $g(V_\mu)=V_\mu$ since different $\mu's$ correspond to different irreducible representations of $\mathbf G$. By Schur's lemma, the restriction of $g$ onto each $V_\mu$ is a scalar matrix. So $g$ can be written as $diag\{ a_\mu \}$. It follows that $\bar G_{\mathbf K}$ is a torus.
\end{proof}

\begin{lem}\label{uni}
There is a perturbation $\tilde v$ of the soliton VF  $v$ such that $X_\infty$ is biholomorphic to $\lim_{i\rightarrow \infty} e^{i \tilde v}\cdot X$. Moreover, there is an element $\tilde \Lambda \in \mathcal V_{\mathbb R}$ corresponding to this degeneration.
\end{lem}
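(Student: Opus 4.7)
The plan is to combine the torus structure provided by the preceding lemma with the two-step degeneration of Theorem \ref{th1}. Since $X$ is $\mathbf G$-spherical, $\bar G_{\mathbf K}$ is a torus, and because the soliton vector field $v$ commutes with $\mathbf K$, we may assume $v\in\bar G_{\mathbf K}$ is diagonal with real eigenvalues in a basis of $E=H^0(X,K_X^{-l})$ adapted to the weight decomposition $E=\bigoplus_{\mu\in\Delta_l}V_\mu$. By Theorem \ref{th1}, $\bar X_\infty=\lim_{t\to\infty}e^{tv}\cdot X$ and there is a 1-PS $\tau(s)=e^{su}$ in $G_{\mathbf K,v}\cap {\rm Aut}^v(X_\infty)$ whose limit further degenerates $\bar X_\infty$ to $X_\infty$. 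Because $\bar G_{\mathbf K}$ is abelian, $u$ automatically lies in this torus, is diagonal in the same basis, commutes with $v$, and (after replacing it by the real part of its generator if necessary) acts with real eigenvalues.

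I would then set $\tilde v := v + \epsilon u$ for a sufficiently small rational $\epsilon>0$, and verify that the flat limit $\lim_{i\to\infty}e^{i\tilde v}\cdot X$ is biholomorphic to $X_\infty$. The key is a weight-ordering argument on the Hilbert point $[X]$: the action of $\tilde v$ on $V_\mu$ has weight $w_v(\mu)+\epsilon\, w_u(\mu)$, so for $\epsilon$ small enough the induced ordering of eigenvalues refines that of $v$, while agreeing with the $u$-ordering inside each common $v$-eigenspace. By Lemma \ref{sim}, the flat limit depends only on the type of these eigenvalues on $\Lambda^{k}V$, so the single-step degeneration by $\tilde v$ realizes the composition of the $v$-degeneration $X\rightarrow \bar X_\infty$ followed by the $u$-degeneration $\bar X_\infty \rightarrow X_\infty$. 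The required smallness of $\epsilon$ amounts to a finite collection of strict inequalities among finitely many explicit weights, and can always be arranged.

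Since $\tilde v\in\bar G_{\mathbf K}$ commutes with the full action of $\mathbf G$, it induces a $\mathbf G$-equivariant real 1-PS and hence a torus 1-PS degeneration of $(X,K_X^{-1})$ with central fiber $X_\infty$. Replacing $\tilde v$ by its canonical lifting only shifts it by a constant by Lemma \ref{anti} and leaves the central fiber unchanged; together with $X_\infty$ being $\mathbb Q$-Fano, this makes the degeneration special in the sense of Section 2. Proposition \ref{special-classification-2} then produces a unique $\tilde\Lambda\in\mathcal V_{\mathbb R}$ whose eigenvalues on $V_\lambda\subset H^0(X,K_X^{-k})$ equal $\tilde\Lambda(\lambda)$, corresponding exactly to the special torus 1-PS degeneration generated by $\tilde v$. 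The main obstacle is the two-scale perturbation step: one must rule out a strictly intermediate limit between $X$ and $X_\infty$, and this is precisely where the commutativity of $\bar G_{\mathbf K}$—a consequence of the spherical hypothesis—becomes essential, since otherwise $v$ and $u$ need not be simultaneously diagonalizable and the weight-refinement argument breaks down.
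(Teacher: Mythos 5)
Your argument is essentially correct, but it takes a genuinely different route from the paper's. You compose the two degenerations already supplied by Theorem \ref{th1} --- the $1$-PS degeneration $X\rightsquigarrow\bar X_\infty$ generated by $v$ and the $\mathbb C^*$-degeneration $\bar X_\infty\rightsquigarrow X_\infty$ generated by some $u$ --- into the single degeneration generated by $v+\epsilon u$, checking via the explicit limit formula in the proof of Lemma \ref{sim} that for small $\epsilon$ the minimal $(v+\epsilon u)$-weight block of the nonzero Pl\"ucker coordinates of $[X]$ is exactly the minimal $u$-weight sub-block inside the minimal $v$-weight block; this is the standard ``composition of degenerations'' device, and you correctly identify that commutativity of $\bar G_{\mathbf K}$ (hence simultaneous diagonalizability of $v$ and $u$) is what the spherical hypothesis buys you. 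The paper never invokes the second-stage $\mathbb C^*$-degeneration of Theorem \ref{th1} here: it works directly with the flow matrices $A_i=\mathrm{diag}\{e^{(\lambda^k+\epsilon_i^k)i}\}$ and extracts the perturbation from the error terms $\epsilon_i$ by an iterated rescaling --- if $i\epsilon_i$ is unbounded one normalizes $i\epsilon_i/|i\epsilon_i|\to\eta$, restricts to the sub-block where $\langle w_\alpha,\eta\rangle$ is minimal, and repeats --- arriving at a multi-scale perturbation $\tilde v=\mathrm{diag}\{\lambda^k+s\eta^k+\cdots+s_l\eta^k_l\}$. Your version is shorter and reuses Theorem \ref{th1} more fully; the paper's version is self-contained at this step and exhibits $\tilde v$ explicitly as a small deformation of $v$ read off from the asymptotics of the flow itself. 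Both conclude identically: $\tilde v$ lies in the Lie algebra of the torus $\bar G_{\mathbf K}$, hence generates a $\mathbf G$-equivariant special torus $1$-PS degeneration, and Theorem \ref{special-classification} supplies $\tilde\Lambda\in\mathcal V_{\mathbb R}$. Two small points you should make explicit: the finitely many strict inequalities fixing the admissible range of $\epsilon$ are among weights on the fixed finite-dimensional space $\Lambda^{k}V$ carrying the Hilbert point, and the limit $\lim_{s\to 0}\tau(s)[\bar X_\infty]$ produced by the Luna slice argument is only $[X_\infty]$ up to an element of $\bar G_{\mathbf K}$, which is harmless since the lemma asserts biholomorphism only (the paper's conclusion likewise carries a factor $C\in\bar G_{\mathbf K}$).
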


\begin{proof}
As in (\ref{ODE-s}), we have defined matrices $A_t$. By Lemma \ref{commication}, $A_t$ commutes with $\mathbf K$. So from the the lemma above, we can write $A_i=diag\{a_i^1,...,a_i^N\}\in G_{\mathbf K}$ and $v=\{diag\{\lambda^1,...,\lambda^N\}\in Lie(G_{\mathbf K})$. Then for $1\leq k\leq N$, $\lim_{i\rightarrow \infty} a_i^k (a_{i-1}^k)^{-1}=e^{\lambda^k}$. It follows that $a_i^k=e^{(\lambda^k+\epsilon_i^k)i}$ with $\lim_{i\rightarrow \infty} \epsilon_i^k=0$. Denote the Hilbert point $[X]$ by $[z_\alpha]$ and the vector $\lambda=(\lambda^k), \epsilon_i=(\epsilon_i^k)$.  As in Lemma \ref{appr1}, the induced action of $A_i$ on $[X]$ will be $[e^{-i\langle w_\alpha, \lambda+\epsilon_i \rangle}z_\alpha]$ for some vector $w_\alpha$ with integral components. Let $Q=\{u\in Lie(G_{\mathbf K})|\langle w_\alpha, u\rangle=\langle w_\beta, u\rangle, \forall \alpha\neq \beta\}$. For any $\lambda \in Lie(G_{\mathbf K})$, $e^\lambda \cdot [X]=e^{pr_{Q^\perp}(\lambda)} \cdot [X]$, where $pr_{Q^\perp}$ means the projection to the complement of $Q$. Now we replace $\lambda, \epsilon_i$ by their projections and keep the notations for simplicity. Denote $c=min\{\langle w_\alpha,\lambda\rangle|z_\alpha\neq 0\}$ and $I=\{\alpha|\langle w_\alpha,\lambda\rangle=c, z_\alpha\neq 0\}$.  $I$ is a not the whole set. Then we have
\begin{align}
[\bar X_\infty]=\lim_{i\rightarrow \infty} e^{v i}\cdot [X]&=\lim_{i\rightarrow \infty}[e^{-i\langle w_\alpha, \lambda\rangle}z_\alpha]=[z_\alpha(\alpha \in I),0].
\end{align}

Choose $\alpha_0\in I$, we have 
$$\lim_{i\rightarrow \infty}[e^{-i\langle w_\alpha, \lambda+\epsilon_i \rangle}z_\alpha]=\lim_{i\rightarrow \infty}[e^{-i\langle w_\alpha-w_{\alpha_0}, \lambda+\epsilon_i \rangle}z_\alpha].$$
Then for $\alpha \notin I$, $\lim_{i\rightarrow \infty} e^{-i\langle w_\alpha-w_{\alpha_0}, \lambda+\epsilon_i \rangle}=0.$  But for $\alpha \in I$,  it holds 
 $$\langle w_\alpha-w_{\alpha_0}, \lambda+\epsilon_i \rangle=\langle w_\alpha-w_{\alpha_0}, \epsilon_i \rangle.$$
  Thus  we get 
\begin{align}\label{l1}
\lim_{i\rightarrow \infty}[e^{-i\langle w_\alpha, \lambda+\epsilon_i \rangle}z_\alpha]=\lim_{i\rightarrow \infty}[e^{-\langle w_\alpha, i\epsilon_i \rangle}z_\alpha(\alpha\in I),0].
\end{align}

If the norms of $i\epsilon_i$ is bounded, we can assume that $\lim_{j\rightarrow \infty} i_j\epsilon_{i_j}=\epsilon \in Lie(G_\mathbf K)$ by subsequence. Then the right hand side of (\ref{l1}) is $[e^{-\langle w_\alpha, \epsilon \rangle}z_\alpha(\alpha\in I),0]$, which is equal to $e^\epsilon \cdot[\bar X_\infty]$. If the norms of $i\epsilon_i$ goes into infinity, we can assume that
$\lim_{j\rightarrow \infty}\frac{i\epsilon_i}{|i\epsilon_i|}=\eta\in Lie(G_\mathbf K)$. Denote $c'=min\{\langle w_\alpha,\eta\rangle|\alpha\in I\}$ and $I'=\{\alpha|\langle w_\alpha,\eta\rangle=c', \alpha\in I\}$. Write $i\epsilon_i=|i\epsilon_i|(\eta+\eta_i)$ with $\eta_i\rightarrow 0$. Then by (\ref{l1}) we have
\begin{align}\lim_{i\rightarrow \infty}[e^{-i\langle w_\alpha, \lambda+\epsilon_i \rangle}z_\alpha]
&=\lim_{i\rightarrow \infty}[e^{-\langle w_\alpha, |i\epsilon_i|(\eta+\eta_i) \rangle}z_\alpha(\alpha\in I),0]\notag\\
&=\lim_{i\rightarrow \infty}[e^{-\langle w_\alpha, |i\epsilon_i|\eta_i \rangle}z_\alpha(\alpha\in I'),0].
\notag
\end{align}
The right hand side has the same form as that of (\ref{l1}) with a smaller set $I'$. If the sequence $|i\epsilon_i|\eta_i$ is bounded, we are done. Otherwise, as above, we can assume that $\lim_{i\rightarrow \infty}\frac{|i\epsilon_i|\eta_i}{|i\epsilon_i||\eta_i|}=\eta_1\in Lie(G_\mathbf K)$ by subsequence. Continuing this process, we will get a sequence of vectors $\eta, \eta_1,...,\eta_l$ in $Lie(G_\mathbf K)$ for some $l\leq |I|$. Now for small positive number $s,s_1,...,s_l$, denoting $\tilde v=diag\{\lambda^k+s\eta^k+...+s_l\eta^k_l\}$,
 we have 
$$[X_\infty]=C\cdot\lim_{i\rightarrow \infty}e^{i\tilde v}[X],$$
where $C$ is some element in $\bar G_{\mathbf K}$. Since $e^{\tilde v}\in G_{\mathbf K}$, it generates a $\mathbf G-$equivariant 1-PS degeneration. By Theorem \ref{special-classification}, there is a $\tilde \Lambda \in \mathcal V_{\mathbb R}$ corresponding to this degeneration. The lemma is proved.
\end{proof}

\begin{proof}[Completion of proof of Theorem \ref{main}]
By Theorem \ref{th1},  the    soliton VF  $v$ of  $(X_\infty,  \omega_\infty)$ induces  a special torus $1$-PS degeneration  $e^{tv}$ of $X$  with  central fiber $\bar X_\infty$.  
Since $\bar X_\infty$ can also be a central fiber of torus $\mathbb C^*$-degeneration by Proposition \ref{RTC-appr},   
 $\bar X_\infty$ is  a $\mathbf G$-spherical variety  by  \cite[Theorem 3.30] {Del1}.
 On the other hand,   the  degeneration $(X, e^{tv})$  attains the minimum of H-invariant $H(\mathcal X^\sigma)$  for special $1$-PS degenerations of $X$  by  \cite[Theorem 3.2] {DS16} (also see  \cite[Proposition 5.6] {WZ}),  we may  restrict  the H-invariant for  special  torus $1$-PS degenerations.  By Theorem \ref{special-classification} and Proposition \ref{hinv}, there is a  unique $\Lambda_v \in \mathcal V_{\mathbb R}$  as a minimizer of  $H(\mathcal X^\sigma)$ 
 in (\ref{Hfor}) which corresponds to the torus $1$-PS degeneration $(X, e^{tv})$. Thus by perturbing   the soliton VF  $v$ to $\tilde v$  in  Lemma \ref{uni},   there is another  element $\tilde \Lambda \in \mathcal V_{\mathbb R}$  which induces a   special  torus $1$-PS degeneration $e^{i \tilde v}$  from $X$ to $X_\infty$. Again by  Proposition \ref{RTC-appr} together with Theorem \ref{special-classification},  we may choose $\tilde \Lambda \in \mathcal V_{\mathbb R}$ such that the degeneration  $(X, e^{i \tilde v})$  induces  a  special  torus $\mathbb C^*$-degeneration  of $X$ with  the central fiber $ X_\infty$.  By \cite[Theorem 3.30] {Del1},   it follows that $ X_\infty$ is also a $\mathbf G$-spherical variety.  
 Therefore, the  proof  is completed.

 \end{proof}


\end{document}